\theoremstyle{plain}
\newtheorem{thm}{Theorem}[section]
\newtheorem{cor}[thm]{Corollary}
\newtheorem{lem}[thm]{Lemma}
\newtheorem{prop}[thm]{Proposition}
\theoremstyle{definition}
\newtheorem{defn}[thm]{Definition}
\theoremstyle{remark}
\newtheorem{rem}[thm]{Remark}
\newtheorem{rems}[thm]{Remarks}
\newtheorem*{expl}{Example}
\numberwithin{equation}{section}
\newcommand{\NN}{\mathbb{N}} 
\newcommand{\RR}{\mathbb{R}} 
\newcommand{\CC}{\mathbb{C}} 
\newcommand{\Rd}{\mathbb{R}^{d}} 
\newcommand{\g}{\mathfrak{g}}
\newcommand{\gstar}{\mathfrak{g}^{*}}
\newcommand{\ph}{\varphi}
\newcommand{\id}{\mathrm{id}}
\newcommand{\DiffRd}{\mathrm{Diff}_{H^{\infty}}(\mathbb{R}^{d})} 
\newcommand{\DiffRone}{\mathrm{Diff}_{H^{\infty}}(\mathbb{R})} 
\newcommand{\DRd}[1]{\mathcal{D}^{#1}(\mathbb{R}^{d})} 
\newcommand{\HRd}[1]{H^{#1}(\mathbb{R}^{d},\mathbb{R}^{d})} 
\newcommand{\LRd}{L^{2}(\mathbb{R}^{d},\mathbb{R}^{d})} 
\newcommand{\CSCRd}{C_{c}^{\infty}(\mathbb{R}^{d},\mathbb{R}^{d})} 
\newcommand{\CS}{\mathrm{C}^{\infty}} 
\newcommand{\norm}[1]{\left\Vert#1\right\Vert}
\newcommand{\abs}[1]{\left\vert#1\right\vert}
\newcommand{\set}[1]{\left\{#1\right\}}
\newcommand{\op}[1]{\mathbf{op}\left(#1\right)}
\DeclareMathOperator{\ad}{ad} %
\DeclareMathOperator{\tr}{tr} %
\DeclareMathOperator{\grad}{grad} %
\DeclareMathOperator{\dive}{div} %
\begin{document}

\title[Well-posedness of fractional order EPDiff equations on $\Rd$]{Local and Global Well-posedness of the fractional order EPDiff equation on $\mathbb{R}^{d}$}

\author{Martin Bauer}
\address{Faculty for Mathematics, University of Vienna,  Austria}
\email{bauer.martin@univie.ac.at}

\author{Joachim Escher}
\address{Institute for Applied Mathematics, University of Hanover, D-30167 Hanover, Germany}
\email{escher@ifam.uni-hannover.de}

\author{Boris Kolev}
\address{Aix Marseille Universit\'{e}, CNRS, Centrale Marseille, I2M, UMR 7373, 13453 Marseille, France}
\email{boris.kolev@math.cnrs.fr}

\subjclass[2010]{58D05, 35Q35}
\keywords{EPDiff equation; diffeomorphism groups; Sobolev metrics of fractional order} %

\date{\today}

\begin{abstract}
Of concern is the study of fractional order Sobolev--type metrics on the group of $H^{\infty}$-diffeomorphism of $\mathbb{R}^{d}$ and on its Sobolev completions $\mathcal{D}^{q}(\mathbb{R}^{d})$. It is shown that the $H^{s}$-Sobolev metric induces a strong and smooth Riemannian metric on the Banach manifolds $\mathcal{D}^{s}(\mathbb{R}^{d})$ for $s >1 + \frac{d}{2}$. As a consequence a global well-posedness result of the corresponding geodesic equations, both on the Banach manifold $\mathcal{D}^{s}(\mathbb{R}^{d})$ and on the smooth regular Fr\'{e}chet-Lie group of all $H^{\infty}$-diffeomorphisms is obtained. In addition a local existence result for the geodesic equation for metrics of order $\frac{1}{2} \leq s <  1 + d/2$ is derived.
\end{abstract}

\maketitle

\setcounter{tocdepth}{1}
\tableofcontents

\section{Introduction}
\label{sec:introduction}

In 1966 Arnold observed in his seminal article~\cite{Arn1966}, that the incompressible Euler equations can be interpreted as the geodesic equation on the group of volume-preserving diffeomorphisms with respect to the right invariant $L^2$--metric.
Subsequently, it has been shown that there exist a geometric interpretation for many other physically relevant PDEs; e.g.:

\begin{itemize}
 \item Burgers' equation as the geodesic equation on $\operatorname{Diff}(S^1)$ with the $L^2$-metric;
 \item the Camassa--Holm equation~\cite{CH1993} on $\operatorname{Diff}(S^1)$ with the $H^1$-metric in~\cite{Kou1999};
 \item the Degasperis-Procesi equation on $\operatorname{Diff}(S^1)$ with a non-metric connection~\cite{EK2011} (it has also been described previously as an evolution equation on the space of tensor densities, see~\cite{Guh2007,LMT2010}, in the spirit of the Euler-Poincar\'{e} formalism);
 \item the KdV equation on the Virasoro--Bott group with the $L^2$-metric in~\cite{KO1987};
 \item the Hunter-Saxton equation on the homogeneous space $\operatorname{Diff}(S^1)/S^1$ with respect to the homogeneous $\dot H^{1}$-metric~\cite{Len2007, Len2008};
 \item the modified Constantin--Lax--Majda equation~\cite{CLM1985} as the geodesic equation on the homogeneous space $\operatorname{Diff}(S^1)/S^1$ with respect to the homogeneous $\dot H^{1/2}$-metric~\cite{Wun2010,EKW2012}.
\end{itemize}

These geometric interpretations as geodesic equations on infinite dimensional manifolds have been used to obtain existence and stability results for the corresponding PDEs: Ebin and Marsden~\cite{EM1970} proved local well--posedness of the geodesic initial value problem for the incompressible Euler equations. Their method is based on a extension of the metric and the geodesic spray to the Hilbert manifold of Sobolev diffeomorphisms. Using similar techniques Constantin and Kolev showed in~\cite{CK2003} that the geodesic equation of the right--invariant Sobolev metrics of integer $k\geq 1$ on $\operatorname{Diff}(S^1)$ is locally well-posed. In~\cite{EK2014} this result was extend by Escher and Kolev to the class of fractional order Sobolev metric of order $s\geq \frac{1}{2}$ and in~\cite{EK2014a} they proved global existence of geodesics on $\operatorname{Diff}(S^1)$, provided that the order
$s$ satisfies $s>\frac32$.

For diffeomorphism groups of a general (possibly) higher dimensional, compact manifold $M$ the situation has been only studied for integer order metrics: metrics of order one have been studied by Shkoller in \cite{Shk1998,Shk2000}; in~\cite{MP2010} Preston and Misiolek showed that the geodesic equation on $\operatorname{Diff}(M)$ is locally well-posed for Sobolev metrics of integer order $k\geq 1$. This result can also be found implicitly in~\cite{EM1970,BHM2011}. In a recent preprint~\cite{BV2014}, by Bruveris and Vialard, metric and geodesic completeness on the Banach manifold $\mathcal D^s(M)$ have been studied provided that the metric is smooth and strong. Using a result of~\cite{EM1970}, see also~\cite{MP2010}, this is true for the class of Sobolev metrics of sufficiently high integer order. This is in correspondence with the results of Mumford and Michor in~\cite{MM2013}, where global well-posedness on the group of $H^{\infty}$-diffeomorphisms of $\mathbb R^{d}$ for integer order metrics is proven, see also~\cite{TY2005}. However, so far both local and global well-posedness was left open for fractional order Sobolev metrics. This article serves as a contribution towards this goal, as it discusses local and global well-posedness for fractional order Sobolev metrics on the diffeomorphism group of $\mathbb R^{d}$. Our main result is the following:

\begin{thm}
Let $M$ be either the $d$-dimensional Torus $\mathbb{T}^{d}$ or the Euclidean space
$\Rd$.
Let $G^s$ be the fractional order Sobolev type metric of order $s$ on the group of $H^{\infty}$-diffeomorphisms $\operatorname{Diff}(M)$.
\begin{enumerate}
 \item Let $s\geq \frac 12$. Then, given any $(\varphi_{0},v_{0})\in T\mathrm{Diff}(M)$, there exists a unique non-extendable geodesic $(\varphi, v) \in C^{\infty}(J,T\mathrm{Diff}(M))$ defined on the maximal time interval $J$, which is open and contains $0$.
\item All geodesics exist globally, provided $s>\frac d2+1$.
\end{enumerate}
\end{thm}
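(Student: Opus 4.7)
The plan is to adapt the Ebin--Marsden strategy, as already used in \cite{EK2014} for the one-dimensional case, to the higher-dimensional fractional setting. The core idea is to reformulate the geodesic equation $\partial_{t}\varphi = v\circ\varphi$, $\partial_{t}v = -A^{-1}\ad_{v}^{*}(Av)$ as a second-order autonomous ODE on the tangent bundle of a Banach manifold $\mathcal{D}^{q}(M)$ of Sobolev-class diffeomorphisms, apply a Banach-space Picard--Lindel\"of theorem, and then descend to the smooth regular Fr\'{e}chet--Lie group $\operatorname{Diff}(M)$ by a ``no-loss--no-gain'' argument.

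More precisely, after writing the Lagrangian form of the spray as $(\varphi,v)\mapsto\bigl(\varphi,v,v,F_{\varphi}(v)\bigr)$, where $F_{\varphi}$ is built from the right-translated inertia operator $A_{\varphi}=R_{\varphi}\circ A\circ R_{\varphi^{-1}}$, the first step is to prove that this spray extends to a \emph{smooth} vector field on $T\mathcal{D}^{q}(M)$ for $q$ sufficiently large (depending on $s$ and $d$). Once this smoothness is secured, the standard existence and uniqueness theorem for ODEs on Banach manifolds yields a unique local solution $(\varphi,v)\in C^{\infty}\bigl(J,T\mathcal{D}^{q}(M)\bigr)$ defined on an open maximal interval $J\ni 0$. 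For part~(1), if the initial data lies in $T\operatorname{Diff}(M)=\bigcap_{q}T\mathcal{D}^{q}(M)$, uniqueness applied to the extensions at each regularity level forces the trajectories to coincide on their common intervals, so the maximal interval cannot shrink as $q$ increases and the solution remains smooth for all time in $J$.

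For part~(2), the hypothesis $s>d/2+1$ is precisely the threshold announced in the abstract under which $G^{s}$ is a \emph{strong and smooth} Riemannian metric on $\mathcal{D}^{s}(M)$. Choosing $q=s$, we are in a position to invoke the Bruveris--Vialard framework cited in~\cite{BV2014}: for a strong smooth metric on a Sobolev-class diffeomorphism manifold, conservation of the Riemannian energy together with metric completeness of $(\mathcal{D}^{s}(M),\mathrm{dist}^{s})$ imply geodesic completeness. Combining this with the no-loss--no-gain step from part~(1) yields global existence of smooth geodesics on $\operatorname{Diff}(M)$.

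The main obstacle is the smoothness of the spray on $T\mathcal{D}^{q}(M)$ in the fractional-order case. For integer $s$ the operator $A=(1-\Delta)^{s}$ is a differential operator and its conjugation by a Sobolev diffeomorphism is controlled by the chain rule and Sobolev multiplication estimates; for fractional $s$, however, $A$ is a genuine pseudodifferential operator and $\varphi\mapsto A_{\varphi}$ must be analysed via a parametrix / symbolic calculus, with commutator and mapping estimates in fractional $H^{q}$-spaces. The hard part of the argument will be establishing that both $\varphi\mapsto A_{\varphi}$ and $\varphi\mapsto A_{\varphi}^{-1}$ are smooth maps from $\mathcal{D}^{q}(M)$ into appropriate spaces of bounded linear operators; everything else in the proof is then a fairly routine assembly of ODE theory on Banach manifolds and the global-in-time energy argument.
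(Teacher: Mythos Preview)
Your outline is correct and matches the paper's strategy almost exactly: smoothness of the extended spray on $T\mathcal{D}^{q}(\Rd)$, Picard--Lindel\"of, a no-loss--no-gain argument to reach $\DiffRd$, and then strongness of the metric for $s>1+d/2$ to obtain geodesic completeness. Two points deserve sharpening.

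First, the smoothness of $\varphi\mapsto A_{\varphi}$ is not obtained via a parametrix or general pseudodifferential calculus. The paper instead computes the Gateaux derivatives $A_{n}:=\partial^{n}_{\id}A_{\varphi}$ explicitly by a recursion in iterated commutators with $\nabla_{u}$, writes their Fourier transforms as multilinear integrals with symbols $a_{n}(\xi_{0},\dotsc,\xi_{n})$ satisfying a concrete recurrence, and proves the pointwise estimate
\[
\norm{a_{n}(\xi_{0},\dotsc,\xi_{n})}\lesssim\Big(\prod_{k}\lambda_{1}(\xi_{k})\Big)\sum_{J\subset\{1,\dotsc,n\}}\lambda_{r-1}\Big(\xi_{0}+\sum_{j\in J}\xi_{j}\Big),
\]
which, combined with the $H^{q-1}$-algebra property, yields $A_{n}\in\mathcal{L}^{n+1}(H^{q},H^{q-r})$ for $r\ge 1$ and $q>1+d/2$. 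This direct symbol computation is the technical heart; a generic parametrix argument would not obviously control all iterated commutators uniformly.

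Second, your no-loss--no-gain step as written is incomplete. Uniqueness only gives $J_{q+1}\subset J_{q}$ and agreement on the smaller interval; it does \emph{not} prevent the maximal interval from shrinking as $q$ increases. The paper closes this gap by exploiting that translations $\psi^{u}_{s}(x)=x+su$ act as isometries of the right-invariant metric: differentiating the flow $\Phi_{q}(t,\psi^{u}_{s}\cdot(\varphi_{0},v_{0}))$ in $s$ shows that $(T\varphi(t).u,\nabla_{u}v(t))\in H^{q}\times H^{q}$ for every constant $u$, hence $(\varphi(t),v(t))\in T\mathcal{D}^{q+1}$ for all $t\in J_{q}$, giving $J_{q}=J_{q+1}$. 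For part~(2) your appeal to \cite{BV2014} is fine; the paper itself invokes the equivalent statement from \cite{GMR2009} (a topological group with smooth right translations and a smooth strong metric is geodesically complete), and notes the Bruveris--Vialard Hopf--Rinow result as a remark.
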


\begin{rem}
In all of the article we will only treat the case $M=\Rd$. The proofs work without any substantial change for $M=\mathbb{T}^{d}$.
\end{rem}

\paragraph{\textbf{Outline of the article.}}
In the following we give a short overview of the structure of the article: In Section~\ref{sec:Dq}, we introduce the diffeomorphism groups studied in this article and review basic results on their manifold and group structure. In Section~\ref{sec:EPDiff}, we define the class of right-invariant metrics -- and in particular of right-invariant Sobolev metrics -- and present their geodesic equation. The metrics in this article are defined via Fourier multipliers of class $\mathcal S^r$, which is the content of Section~\ref{sec:Fourier-multipliers}. In Section~\ref{sec:smoothness}, we show that these metrics induce a smooth metric on the Sobolev completions $\DRd{q}$, for sufficiently high $q$ and $r$. This is then used to prove local (Section~\ref{sec:well-posedness}) and global (Section~\ref{sec:global-well-posedness}) well-posedness. Some facts on Fourier multipliers which are needed in our analysis are collected in the Appendices A and B. Finally, the technically most involved computations for the
symbols of higher derivatives of the conjugation are postponed to Appendix C.

\section{The groups of smooth and Sobolev diffeomorphisms}
\label{sec:Dq}

In this paper, we will be mainly interested in the diffeomorphism group
\begin{equation}\label{eq:Diff_infinity}
    \DiffRd := \set{\id + f;\; f \in \HRd{\infty} \; \text{and} \; \operatorname{det}(\id + df)> 0}\,.
\end{equation}
Here $ \HRd{\infty}$ denotes the space of $\Rd$-valued $H^{\infty}$-functions on $\Rd$, i.e.,
\begin{equation*}
    \HRd{\infty} := \bigcap_{q \ge 0} \HRd{q}\,,
\end{equation*}
where $\HRd{q}$ denotes the ($\Rd$-valued) Sobolev space on $\Rd$, see Section \ref{subsec:Hq}.
In addition we will need the Hilbert approximations $\DRd{q}$ of $\DiffRd$:
\begin{equation*}
    \DRd{q} := \set{\id + f;\; f \in \HRd{q} \; \text{and} \; \operatorname{det}(\id + df)> 0}\,,
\end{equation*}
defined for $q>\frac{d}2+1$. In this section we will recall the basic definitions and results to rigorously define these groups. For a more detailed treatment of the groups $\DRd{q}$ we refer to the monograph~\cite{IKT2013} and for the group $\DiffRd$ to~\cite{HD2010,MM2013a}.

\subsection{The Sobolev space $\HRd{q}$.}
\label{subsec:Hq}

The Fourier transform $\mathcal{F}$ on $\RR^{d}$ will be defined as
\begin{equation*}
  (\mathcal{F} f)(\xi) = \int_{\Rd} e^{-2i\pi \langle  x,\xi\rangle} f(x) \, dx
\end{equation*}
where $\xi$ is the independent variable in the frequency domain. With this convention, its inverse $\mathcal{F}^{-1}$ is given by:
\begin{equation*}
  (\mathcal{F}^{-1} g)(x) = \int_{\Rd} e^{2i\pi \langle  x,\xi\rangle} g(\xi) \, d\xi \, .
\end{equation*}

For $q\in \RR^+$ the Sobolev $H^q$-norm of an $\Rd$-valued function $f$ on $\Rd$ is
defined by
\begin{equation*}
  \norm{f}_{H^{q}}^{2} := \norm{(1+|\xi|^{2})^{\frac{q}2} \mathcal{F} f}_{L^{2}}^{2}\, .
\end{equation*}
Using this norm we obtain the Sobolev spaces of -- possibly non-integral -- order $q$:
\begin{equation*}
  \HRd{q} = \set{ f \in L^{2}(\Rd,\Rd): \norm{f}_{H^{q}} < \infty }\;.
\end{equation*}
These spaces are also known under the name \emph{Liouville spaces} or \emph{Bessel potential
spaces}. To make a connection with other families of function spaces, we note that the
spaces $\HRd{q}$ coincide with
\begin{equation*}
  \HRd{q} = B^q_{22}(\Rd,\Rd) = F^q_{22}(\Rd,\Rd)
\end{equation*}
the \emph{Besov spaces} $B^q_{22}(\Rd,\Rd)$ and with the spaces of \emph{Triebel--Lizorkin type}
$F^q_{22}(\Rd,\Rd)$. Definitions of all these spaces and an introduction to the
general theory of function spaces can be found in~\cite{Tri1983,Tri1992}.

In the following lemma, we collect two important properties of these spaces that we will use throughout this article.

\begin{lem}\label{lem:sobolev_embedding}
Let $q\geq 0$ be any non-negative real number. Then we have:
\begin{itemize}
 \item The space of smooth and compactly supported functions $C^{\infty}_c(\Rd,\Rd)$ is a dense subset of $\HRd{q}$.
 \item If $q > d/2$ then the space $H^{q+r}(\Rd,\Rd)$ can be embedded into the space $C^r_0(\Rd,\Rd)$ of all $C^r$-functions vanishing at infinity for any integer $r$.
 \item If $q > d/2$ and $p\geq1$ with $p \le q$ then pointwise multiplication extends to a
bounded bilinear mapping
\begin{equation*}
  \HRd{q} \times \HRd{p} \to \HRd{p}.
\end{equation*}
\end{itemize}
\end{lem}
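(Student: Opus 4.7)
The three items are essentially independent; I would treat them in sequence, each exploiting the Fourier characterization of $\HRd{q}$.

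\smallskip

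\emph{Item (1).} The plan is a routine two-step approximation. Given $f\in\HRd{q}$, first regularize by convolution with a standard mollifier $\eta_\eps$; since convolution acts on the Fourier side as multiplication by $\mathcal{F}\eta_\eps$, dominated convergence in the weighted $L^2$-space with weight $(1+\abs{\xi}^2)^q$ yields $\eta_\eps*f \to f$ in $\HRd{q}$. Second, multiply by a smooth spatial cutoff $\chi(\cdot/R)$, with $\chi\in C_c^\infty(\Rd)$ equal to $1$ near the origin, to obtain a compactly supported smooth approximant; convergence in $\HRd{q}$ as $R\to\infty$ follows from standard pointwise multiplier estimates for $\chi(\cdot/R)$.

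\smallskip

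\emph{Item (2).} I would differentiate under the Fourier inversion formula: for $\abs\alpha\leq r$,
\begin{equation*}
  \partial^\alpha f(x) = \int_{\Rd} (2i\pi\xi)^\alpha e^{2i\pi\langle x,\xi\rangle}\mathcal{F}f(\xi)\,d\xi,
\end{equation*}
and then apply Cauchy--Schwarz with weight $(1+\abs{\xi}^2)^{(q+r)/2}$ to obtain
\begin{equation*}
  \norm{\partial^\alpha f}_{L^\infty} \leq \norm{f}_{H^{q+r}}\left(\int_{\Rd}\frac{\abs{2\pi\xi}^{2\abs\alpha}}{(1+\abs{\xi}^2)^{q+r}}\,d\xi\right)^{1/2},
\end{equation*}
whose right-hand side is finite since $2(q+r-\abs\alpha) > d$. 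Continuity of $\partial^\alpha f$ follows by dominated convergence. To see that $\partial^\alpha f$ vanishes at infinity, I would invoke item (1): approximating $f$ in $\HRd{q+r}$ by a sequence $f_n\in\CSCRd$ yields uniform convergence $\partial^\alpha f_n \to \partial^\alpha f$, and since each $\partial^\alpha f_n$ is compactly supported, the uniform limit belongs to $C_0^r(\Rd,\Rd)$.

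\smallskip

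\emph{Item (3).} This is the main obstacle, since a naive Kato--Ponce type bound would require both factors to lie in $L^\infty$, whereas only $u$ is known to do so. My plan is an interpolation argument. First, I would establish the classical Sobolev algebra property: for $u,v \in \HRd{q}$ with $q>d/2$, the elementary bound $(1+\abs{\xi}^2)^{q/2} \leq C\bigl((1+\abs{\eta}^2)^{q/2} + (1+\abs{\xi-\eta}^2)^{q/2}\bigr)$ applied to the convolution identity $\mathcal{F}(uv) = \mathcal{F}u*\mathcal{F}v$, combined with Young's inequality $\norm{F*G}_{L^2} \leq \norm{F}_{L^2}\norm{G}_{L^1}$ and the Cauchy--Schwarz bound $\norm{\mathcal{F}u}_{L^1}\leq C\norm{u}_{H^q}$ (valid precisely because $q>d/2$), yields $\norm{uv}_{H^q}\leq C\norm{u}_{H^q}\norm{v}_{H^q}$. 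Second, for fixed $u\in\HRd{q}$ the multiplication operator $M_u:v\mapsto uv$ is bounded $\HRd{q}\to\HRd{q}$ by the algebra property just proved, and bounded $\LRd\to\LRd$ with norm $\norm{u}_{L^\infty}\leq C\norm{u}_{H^q}$ via item (2). Since the family $\HRd{p}$ with $0\leq p\leq q$ is the complex interpolation scale between $\LRd$ and $\HRd{q}$, complex interpolation yields $\norm{M_u}_{\HRd{p}\to\HRd{p}}\leq C\norm{u}_{H^q}$ for every such $p$, which in particular covers the range $1\leq p\leq q$ stated in the lemma.
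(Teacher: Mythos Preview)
Your proposal is correct. The paper itself does not prove this lemma at all; it simply defers to \cite[Section~2]{IKT2013}. So you have supplied a self-contained argument where the authors chose to cite the literature.

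A few remarks on your approach versus what the paper needs. Your proof of item~(3) via the algebra property at the top endpoint $p=q$, the trivial $L^\infty$ bound at $p=0$, and complex interpolation of the scale $\HRd{p}=[L^2,H^q]_{p/q}$ is the standard clean route and in fact yields the full range $0\le p\le q$, not just $p\ge 1$. This is worth noting because the paper later states and uses exactly this stronger version as Lemma~\ref{lem:pointwise_multiplication} (again citing \cite{IKT2013}); the restriction $p\ge 1$ in the present lemma appears to be an artifact of how it is quoted rather than a genuine constraint. Your items~(1) and~(2) are the textbook arguments; the only step you might want to make slightly more explicit in item~(1) is the convergence $\chi(\cdot/R)g\to g$ in $H^q$ for non-integer $q$, but this follows by interpolation between adjacent integers or by the multiplier bound $\sup_R\norm{\chi(\cdot/R)}_{\mathcal{M}(H^q)}<\infty$ combined with convergence on the dense subspace $\mathcal{S}$.
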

\begin{proof}
The proof of this Lemma can be found in~\cite[Section~2]{IKT2013}.
\end{proof}

\subsection{The Hilbert manifold $\DRd{q}$.}

We are now able to rigorously define the Hilbert manifold $\DRd{q}$.
We will follow the presentation of~\cite{IKT2013}.
Given any $q>\frac{d}{2}+1$, we let
\begin{equation*}
 \DRd{q} = \set{ \ph\in \operatorname{Diff}^1_+(\Rd): \ph- \id \in \HRd{q}}\,.
\end{equation*}
Here $\operatorname{Diff}^1_+(\Rd)$ denotes the set of orientation-preserving $C^1$-diffeomorphisms on $\Rd$.
In~\cite{IKT2013} it has been shown, that equivalent definitions of this group are given by
\begin{align*}
 \DRd{q} &= \set{ \ph: \ph \text{ is bijective},\, \ph,\ph^{-1} \in \id+ \HRd{q}}
 \\
 &=\set{\ph= \id+f:  f\in \HRd{q},\, \operatorname{det}(\id+df)>0}\,.
\end{align*}
Note, that for a compact manifold $M$, this has already been proven in~\cite{Ebi1970}.
Furthermore the mapping
\begin{equation*}
\DRd{q} \rightarrow \HRd{q},\qquad \ph\mapsto \ph-\id
\end{equation*}
provides a smooth chart for $\DRd{q}$ for any $q>\frac{d}2+1$.
Thus we have given $\DRd{q}$ the structure of a smooth Hilbert manifold.

\begin{rem}
Note, that the tangent bundle $T\DRd{q}$ is a trivial bundle
\begin{equation*}
 T\DRd{q}\cong\DRd{q}\times \HRd{q}\,,
\end{equation*}
because $\DRd{q}$ is an open subset of the Hilbert space $\HRd{q}$.
\end{rem}

\begin{rem}\label{Rem:Jacobianbounded}
By the definition of $\DRd{q}$ we can bound the norm of the Jacobian determinant of any diffeomorphism $\varphi \in \DRd{q}$ from below, i.e.,
\begin{equation*}
 \norm{\det(d\varphi)}_{\infty}> C_{1}(\varphi), \qquad \forall \varphi \in \DRd{q}\,.
\end{equation*}
On the other hand, we also have a similar bound from above, i.e.,
\begin{equation*}
 \norm{\det(d\varphi)}_{\infty}< C_{2}(\varphi), \qquad \forall \varphi \in \DRd{q}\,.
\end{equation*}
\end{rem}

\subsection{The topological group $\DRd{q}$.}

The following lemma shows that the manifold $\DRd{q}$ is in addition a topological group but not a Lie group, as
composition and inversion in $\DRd{q}$ are continuous but not smooth.
More precisely we have:

\begin{lem}[Proposition 2.6 in~\cite{IKT2013}]
 Given any real number $q$ with $q>\frac{d}{2}+1$, the Hilbert manifold $\DRd{q}$ is a topological group, but never a Lie group;
 the mappings
 \begin{align*}
  &\mu: \DRd{q} \times\DRd{q} \rightarrow \DRd{q}, &(\ph,\psi)\mapsto \ph\circ\psi\\
  &\nu: \DRd{q} \rightarrow \DRd{q}, &\ph\mapsto \ph^{-1}\, .
 \end{align*}
are continuous, but not smooth.
\end{lem}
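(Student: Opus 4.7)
The plan is to verify the three independent claims of the lemma --- joint continuity of $\mu$, continuity of $\nu$, and absence of smoothness --- by exploiting the Sobolev multiplication property and the density statements from Lemma~\ref{lem:sobolev_embedding}, together with the two-sided Jacobian bounds in Remark~\ref{Rem:Jacobianbounded}. The hypothesis $q > d/2 + 1$ plays a double role throughout: it furnishes the embedding $\HRd{q}\hookrightarrow C^1_0(\Rd,\Rd)$, and it ensures that $\HRd{q-1}$ is an $\HRd{q}$-module.

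For the continuity of $\mu$, the algebraic decomposition
\[
\mu(\id+f,\id+g) \;=\; \id + g + f\circ(\id+g)
\]
reduces everything to showing that $(f,\psi)\mapsto f\circ\psi$ is jointly continuous from $\HRd{q}\times\DRd{q}$ into $\HRd{q}$. The key step is a composition estimate $\|f\circ\psi\|_{H^q}\leq C(\psi)\|f\|_{H^q}$ with $C$ continuous in $\psi$, which follows by a change of variables (to handle the $L^2$-component via the two-sided Jacobian bounds) combined with the chain rule and iterated use of Lemma~\ref{lem:sobolev_embedding}(iii) (to handle the derivatives). Joint continuity at an arbitrary point $(f_0,\psi_0)$ is then deduced from the splitting
\[
f\circ\psi - f_0\circ\psi_0 \;=\; (f-f_0)\circ\psi + (f_0\circ\psi - f_0\circ\psi_0),
\]
where the first summand is controlled by the estimate above and the second is reduced by density to the case $f_0\in C_c^\infty(\Rd,\Rd)$; for such $f_0$ the embedding $\HRd{q}\hookrightarrow C^1$ makes the continuity of $\psi\mapsto f_0\circ\psi$ manifest.

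For the continuity of $\nu$, I would argue by a contraction fixed-point scheme: writing $\varphi=\id+f$ and $\varphi^{-1}=\id+g$, the function $g$ is a fixed point of $T(g) := -f\circ(\id+g)$. For $f$ small in $\HRd{q}$ the bound $\|Df\|_{\infty}\ll 1$ coming from Sobolev embedding makes $T$ a contraction on a small ball of $\HRd{q}$, and the uniform-in-parameter version of Banach's theorem gives continuous dependence of $g$ on $f$. This yields continuity of $\nu$ on a neighbourhood of $\id$; globalisation rests on the identity $\nu(\varphi) = \varphi_0^{-1}\circ\nu(\varphi\circ\varphi_0^{-1})$ for $\varphi$ near $\varphi_0$, combined with the continuity of $\mu$ just established.

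The principal obstacle is the non-smoothness of $\mu$ and $\nu$, which is precisely the reason $\DRd{q}$ fails to be a Lie group. The formal partial differential in the second slot is $\partial_\psi\mu(\varphi,\psi)\cdot v = (D\varphi\circ\psi)\cdot v$, and since $D\varphi$ lies only in $\HRd{q-1}$, the product generically belongs to $\HRd{q-1}\setminus\HRd{q}$; hence $\mu$ cannot be Fréchet differentiable as a map into $\DRd{q}$. To convert this heuristic into a rigorous obstruction, one exhibits a concrete $\varphi\in\DRd{q}$ together with a sequence $v_n\in\HRd{q}$, $\|v_n\|_{H^q}\to 0$, for which the $H^q$-difference quotient of $\mu(\varphi,\cdot)$ at $\id$ fails to converge; the target $\varphi$ is built by a translate of $\id$ whose derivative has Fourier profile concentrated at the $H^q$--threshold. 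The non-smoothness of $\nu$ then follows automatically from that of $\mu$ via $\mu(\varphi,\nu(\varphi))=\id$. This loss-of-derivative step --- in contrast to the routine Sobolev calculus underpinning the continuity statements --- is the main technical hurdle of the lemma.
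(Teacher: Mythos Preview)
The paper does not supply its own proof of this lemma: it is quoted as Proposition~2.6 of \cite{IKT2013} and used as a black box. There is therefore nothing in the paper to compare your argument against, beyond noting that your outline follows the standard route taken in \cite{IKT2013} and, earlier, in \cite{EM1970,Ebi1970} for compact base manifolds.

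Your sketch of the continuity of $\mu$ and $\nu$ is correct and standard. There is, however, a genuine logical gap in your treatment of the non-smoothness of $\nu$: the relation $\mu(\varphi,\nu(\varphi))=\id$ is a single pointwise identity and does not by itself transfer non-smoothness from $\mu$ to $\nu$ (composing a non-smooth map with another map and getting a constant tells you nothing). The clean deduction runs differently. Right translations $R_{\psi}:\varphi\mapsto\varphi\circ\psi$ are smooth on $\DRd{q}$, since in the global chart $\varphi\mapsto\varphi-\id$ they are affine with bounded linear part $g\mapsto g\circ\psi$; and one has the factorisation $\varphi\circ\psi = \nu\big(R_{\varphi^{-1}}(\nu(\psi))\big)$. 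Hence smoothness of $\nu$ would force every left translation $\psi\mapsto\varphi\circ\psi$ to be smooth, contradicting the explicit counterexample you construct for $\mu$. A minor wording issue: your phrase ``translate of $\id$'' for the bad $\varphi$ is misleading, since a literal translate $x\mapsto x+c$ has constant Jacobian and causes no loss of regularity; what you actually need is $\varphi=\id+f$ with $f\in\HRd{q}\setminus\HRd{q+1}$, so that $df\in\HRd{q-1}\setminus\HRd{q}$.
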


We will also need the following result concerning the right action of $\DRd{q}$ on Sobolev functions.

\begin{lem}[Lemma 2.7 in~\cite{IKT2013}]\label{lem:composition}
Given any two real numbers $q,p$ with $q>\frac{d}{2}+1$ and $q\geq p\geq 0$, the mapping
 \begin{equation*}
  \mu^{p}: \HRd{p} \times\DRd{q} \rightarrow \HRd{p}, \qquad (u,\ph) \mapsto u \circ \ph.
 \end{equation*}
is continuous. Moreover, the mapping
\begin{equation*}
  R_{\ph}: u \mapsto u \circ \ph
\end{equation*}
is \emph{locally bounded}. More precisely, given $C_{1}, C_{2}>0$, there exists a constant $C=C(p,C_{1},C_{2})$ such that
\begin{equation*}
  \norm{R_{\ph}}_{\mathcal{L}(H^{p},H^{p})} \leq C ,
\end{equation*}
for all $\ph\in \DRd{q}$ with
\begin{equation*}
  \norm{\ph-\id}_{H^{q}} < C_{1} \quad \text{and} \quad \underset{x\in\Rd}{\inf}\left( \operatorname{det}(d\ph(x))\right) > C_{2},
\end{equation*}
\end{lem}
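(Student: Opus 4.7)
The plan is to first establish the local operator-norm bound on $R_\ph$, and then to deduce joint continuity of $\mu^p$ from it via a density and approximation argument. By Lemma~\ref{lem:sobolev_embedding}, $C^{\infty}_c(\Rd,\Rd)$ is dense in $\HRd{p}$, so the bound on $\norm{u \circ \ph}_{H^p}$ needs only be proved for $u \in C^{\infty}_c(\Rd,\Rd)$ and then extended by continuity.

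For the operator bound, begin with $p = 0$: a direct change of variables gives
\[
\norm{u \circ \ph}_{L^2}^2 = \int_{\Rd} |u(y)|^2 |\det d\ph^{-1}(y)|\, dy \le \frac{1}{C_2}\,\norm{u}_{L^2}^2.
\]
For integer $p \ge 1$, the Fa\`a di Bruno formula expands $\partial^\alpha(u \circ \ph)$ into a finite sum of products of $(\partial^\beta u) \circ \ph$ with polynomials in derivatives of $\ph$ of total order $|\alpha| \le p$. Combining the $p = 0$ estimate for the first factor with Moser-type multiplication estimates in $\HRd{q-1}$ -- a Banach algebra continuously embedded in $L^{\infty}$ since $q - 1 > d/2$ -- produces a constant depending only on $p$, $C_1$, $C_2$. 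For non-integer $p \in (0, q)$, complex interpolation on the Triebel--Lizorkin scale $\HRd{s} = F^s_{22}(\Rd,\Rd)$ (cf.\ \cite{Tri1983}) transports the bounds at the integer endpoints straddling $p$ into the desired estimate, with operator norm controlled by the geometric mean of the endpoint norms.

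For joint continuity, given $(u_0, \ph_0)$ and nearby $(u, \ph)$, write
\[
u \circ \ph - u_0 \circ \ph_0 = (u - u_0) \circ \ph + (u_0 \circ \ph - u_0 \circ \ph_0).
\]
The first summand has $H^p$-norm $\le \norm{R_\ph}_{\mathcal{L}(H^p,H^p)} \norm{u - u_0}_{H^p}$, which is small by the operator bound since $R_\ph$ is uniformly bounded over a neighborhood of $\ph_0$ in $\DRd{q}$. For the second, fix $\eps > 0$ and approximate $u_0$ in $H^p$ by $u_\eps \in C^{\infty}_c(\Rd,\Rd)$; then
\[
u_0 \circ \ph - u_0 \circ \ph_0 = \bigl((u_0 - u_\eps) \circ \ph - (u_0 - u_\eps) \circ \ph_0\bigr) + \bigl(u_\eps \circ \ph - u_\eps \circ \ph_0\bigr).
\]
The first bracket is $O(\eps)$ by the uniform operator bound. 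For the second, since $\HRd{q} \hookrightarrow C^1(\Rd,\Rd)$, the fundamental theorem of calculus yields
\[
u_\eps \circ \ph - u_\eps \circ \ph_0 = \int_0^1 du_\eps\bigl(\ph_0 + t(\ph - \ph_0)\bigr) \cdot (\ph - \ph_0)\, dt,
\]
whose $H^p$-norm is bounded by $C(u_\eps)\,\norm{\ph - \ph_0}_{H^q}$ via the multiplication estimate of Lemma~\ref{lem:sobolev_embedding}, and tends to $0$ as $\ph \to \ph_0$ in $\DRd{q}$.

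The main obstacle is the operator bound in the fractional case: there is no elementary chain-rule expansion for $u \circ \ph$ in fractional Sobolev spaces, and one must either interpolate between integer endpoints while verifying uniform dependence of the constants on $C_1$ and $C_2$, or else use a Littlewood--Paley decomposition and paralinearize $u \mapsto u \circ \ph$ directly, at the cost of more involved harmonic-analytic estimates.
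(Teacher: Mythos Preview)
The paper does not supply a proof of this lemma; it is quoted as Lemma~2.7 of \cite{IKT2013} and used as a black box throughout. Your sketch follows the standard route taken in that reference: change of variables for $p=0$, Fa\`a di Bruno plus product estimates for integer $p$, interpolation for fractional $p$, and the density-plus-integral-remainder argument for joint continuity. So there is nothing to compare against within the present paper itself.

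Two points in your outline would need more care in a full write-up. In the Fa\`a di Bruno step the factors $\partial^{\gamma}\varphi$ with $|\gamma|\ge 2$ lie only in $H^{q-|\gamma|}$, which for $|\gamma|$ close to $p$ need not embed into $L^{\infty}$; you therefore cannot simply invoke the algebra property of $H^{q-1}$ on all factors, but must use Gagliardo--Nirenberg type product estimates that distribute the derivatives across the factors. In the interpolation step, if $q$ is not an integer and $\lfloor q\rfloor < p \le q$, then no integer endpoint above $p$ is available at the given regularity of $\varphi$; one has to establish the $H^{q}$ bound directly (e.g.\ via a difference-quotient or Littlewood--Paley characterisation of the fractional norm) and then interpolate between $0$ and $q$, or bypass interpolation altogether. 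Both issues are handled in \cite{IKT2013}, but they are the genuinely non-trivial part of the argument and should be flagged rather than absorbed into ``Moser-type estimates'' and ``interpolation on the Triebel--Lizorkin scale''.
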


Note, that the first part of this result is the $\Rd$ version of Corollary B.3 in~\cite{EK2014}. In the one-dimensional case
more exact bounds for the composition have been derived in Corollary B.2 in~\cite{EK2014}.

\subsection{A complete metric structure on $\DRd{q}$.}

In this section we will generalize a complete metric on $ \mathcal{D}^{q}(S^1)$, as introduced in~\cite[Section 3]{EK2014a}, to the situation studied in this article.

\begin{defn}
Given any $q>\frac{d}2+1$, we define the distance function
\begin{equation*}
d_q(\ph_{1},\ph_{2}) =\norm{\ph_{1}-\ph_{2}}_{H^{q}}+\norm{\det(d\ph_{1})^{-1}-\det(d\ph_{2})^{-1}}_{\infty}\,.
\end{equation*}
\end{defn}

\begin{thm}\label{thm:complete_metric}
Given any $q>\frac{d}{2}+1$, the space $\left(\DRd{q},d_q \right)$ is a complete metric space and the metric topology is equivalent to the Hilbert manifold topology.
\end{thm}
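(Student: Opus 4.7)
The plan is to verify the metric axioms for $d_q$ (which is routine), prove the topology equivalence via the Sobolev embedding, and then establish completeness by showing that the two summands of $d_q$ together prevent the $H^q$-limit of a Cauchy sequence from leaving $\DRd{q}$.

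Symmetry and the triangle inequality for $d_q$ hold termwise, and non-degeneracy follows from that of $\norm{\cdot}_{H^q}$. For the equivalence of topologies, one direction is immediate from the inequality $\norm{\varphi_1-\varphi_2}_{H^q}\le d_q(\varphi_1,\varphi_2)$. For the converse, suppose $\varphi_n\to\varphi$ in the Hilbert manifold topology, i.e.\ in $\HRd{q}$. Since $q-1>d/2$, Lemma~\ref{lem:sobolev_embedding} gives the embedding $\HRd{q-1}\hookrightarrow C^{0}_{0}$, so $d\varphi_n\to d\varphi$ uniformly and hence $\det(d\varphi_n)\to\det(d\varphi)$ uniformly as well. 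As $\varphi\in\DRd{q}$, the continuous function $\det(d\varphi)$ is strictly positive and tends to $1$ at infinity (because $d(\varphi-\id)\in\HRd{q-1}\hookrightarrow C^{0}_{0}$), so it admits a uniform positive lower bound $c>0$. By uniform convergence, $\det(d\varphi_n)\ge c/2$ eventually, and the elementary identity
$\det(d\varphi_n)^{-1}-\det(d\varphi)^{-1} = (\det(d\varphi)-\det(d\varphi_n))/(\det(d\varphi_n)\det(d\varphi))$
then yields $\det(d\varphi_n)^{-1}\to\det(d\varphi)^{-1}$ uniformly; therefore $d_q(\varphi_n,\varphi)\to 0$.

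For completeness, let $(\varphi_n)$ be $d_q$-Cauchy. Then $(\varphi_n-\id)$ is Cauchy in $\HRd{q}$ and converges to some $f\in\HRd{q}$; set $\varphi:=\id+f$. The second summand being Cauchy in $L^\infty$ supplies a bounded function $g$ such that $\det(d\varphi_n)^{-1}\to g$ uniformly, and in particular $\sup_n\norm{\det(d\varphi_n)^{-1}}_\infty\le M<\infty$, so $\det(d\varphi_n)\ge 1/M$ pointwise. The Sobolev embedding argument from the previous paragraph again gives $\det(d\varphi_n)\to\det(d\varphi)$ uniformly, and passing to the limit yields $\det(d\varphi)\ge 1/M>0$. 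Hence $\varphi\in\DRd{q}$, and by uniqueness of pointwise limits $g=\det(d\varphi)^{-1}$; therefore both summands of $d_q(\varphi_n,\varphi)$ tend to zero.

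The only real obstacle is ensuring that the $H^q$-limit of a $d_q$-Cauchy sequence remains a diffeomorphism, i.e.\ that the strict positivity $\det(d\varphi)>0$ survives in the limit. This is exactly the role of the additional term $\norm{\det(d\varphi_1)^{-1}-\det(d\varphi_2)^{-1}}_\infty$ in $d_q$: it encodes a uniform strict positivity that passes to the limit, and without it a Cauchy sequence could approach the boundary of the open set $\DRd{q}\subset\id+\HRd{q}$.
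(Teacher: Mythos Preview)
Your proof is correct and follows essentially the same strategy as the paper: the paper isolates as a separate lemma that on any $d_q$-bounded subset of $\DRd{q}$ the Jacobian determinant is uniformly bounded below (proved by a diameter argument), and then notes that the rest is verbatim the one-dimensional periodic case; your argument extracts the same uniform lower bound directly from the Cauchy property of the second summand in $L^\infty$, which is an equivalent and equally clean route.
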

The main ingredient of this proof is a bound on the Jacobian determinant on bounded subsets of $\left(\DRd{q},d_q\right)$.
\begin{lem}
Let $q>\frac{d}{2}+1$. Given any (non-empty) bounded subset $B$ of $\left(\DRd{q},d_q\right)$, we have
\begin{equation*}
\underset{\ph\in B }{\operatorname{inf}} \left(\underset{y\in \Rd }{\operatorname{inf}} \operatorname{det}(d\ph(y)) \right)>0\,.
\end{equation*}
\end{lem}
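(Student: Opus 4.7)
The strategy is to exploit the second term in the definition of $d_q$, which directly controls $\det(d\varphi)^{-1}$ in the sup norm. Everything then reduces to a triangle inequality once we know that $\|\det(d\varphi_0)^{-1}\|_\infty$ is finite for a single reference diffeomorphism.

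First I would fix any $\varphi_0 \in B$ and let $R := \sup_{\varphi \in B} d_q(\varphi, \varphi_0)$, which is finite because $B$ is bounded in $(\DRd{q}, d_q)$. Unpacking the definition of $d_q$, this yields in particular
\begin{equation*}
 \norm{\det(d\varphi)^{-1} - \det(d\varphi_0)^{-1}}_{\infty} \leq R, \qquad \forall \varphi \in B.
\end{equation*}

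Next I would show that the reference quantity $\norm{\det(d\varphi_0)^{-1}}_\infty$ is finite. This is essentially Remark~\ref{Rem:Jacobianbounded}, but one can also argue directly: since $q > \frac{d}{2}+1$, the Sobolev embedding of Lemma~\ref{lem:sobolev_embedding} gives $d\varphi_0 - I \in C^{0}_0(\Rd, \Rd \otimes \Rd)$, so $\det(d\varphi_0)(y) \to 1$ as $\abs{y} \to \infty$. Combined with the continuity and pointwise positivity of $\det(d\varphi_0)$, one obtains $\inf_{y \in \Rd} \det(d\varphi_0(y)) \geq c_0$ for some $c_0 > 0$, so $\norm{\det(d\varphi_0)^{-1}}_\infty \leq 1/c_0$.

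Then a triangle inequality in the sup norm yields, uniformly for $\varphi \in B$,
\begin{equation*}
 \norm{\det(d\varphi)^{-1}}_{\infty} \leq \norm{\det(d\varphi)^{-1} - \det(d\varphi_0)^{-1}}_{\infty} + \norm{\det(d\varphi_0)^{-1}}_{\infty} \leq R + \frac{1}{c_0} =: M.
\end{equation*}
Since $\det(d\varphi(y)) > 0$ pointwise by definition of $\DRd{q}$, we can invert to conclude $\det(d\varphi(y)) \geq 1/M$ for every $y \in \Rd$ and every $\varphi \in B$, which is the desired positive lower bound.

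The only mildly non-trivial point is the finiteness of $\norm{\det(d\varphi_0)^{-1}}_\infty$ for one fixed diffeomorphism; after that the result is just the triangle inequality applied to the $\det(d\cdot)^{-1}$ component of the distance. The proof therefore hinges on the clever placement of $\det(d\varphi)^{-1}$ in the definition of $d_q$, which converts an a priori pointwise positivity into a uniform bound on bounded metric subsets.
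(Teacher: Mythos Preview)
Your proof is correct and uses essentially the same idea as the paper: fix a reference $\varphi_{0}\in B$, use Remark~\ref{Rem:Jacobianbounded} (or the Sobolev embedding argument you spell out) to get $\norm{\det(d\varphi_{0})^{-1}}_{\infty}<\infty$, and then apply the triangle inequality to the $\det(d\cdot)^{-1}$ component of $d_{q}$. The only cosmetic difference is that the paper phrases this as a proof by contradiction (assuming the infimum is $0$ and deriving $d_{q}(\varphi_{1},\varphi_{0})>\operatorname{diam}(B)$), whereas you give the direct quantitative bound $\det(d\varphi(y))\ge 1/M$; the underlying computation is the same.
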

\begin{proof}
We consider some fixed $\ph_0$ in $B$ and let
\begin{equation*}
\varepsilon := \frac{1}{\operatorname{diam}(B)+ \|1/\operatorname{det}(d\ph)\|_{\infty}} \,.
\end{equation*}
Since $B$ is bounded and $\norm{1/\det(d\varphi_{0})}_{\infty} < +\infty$ -- c.f. Remark \ref{Rem:Jacobianbounded} -- we have $\varepsilon>0$.
Suppose that
\begin{equation*}
\underset{\ph\in B }{\operatorname{inf}} \left(\underset{y\in \Rd }{\operatorname{inf}} \operatorname{det}(d\ph(y)) \right)=0\,.
\end{equation*}
Then there exists a $\ph_{1}$ such that
$\underset{y\in \Rd }{\operatorname{inf}} \operatorname{det}(d\ph_{1}(y))<\varepsilon$.
But then we have
\begin{align*}
d_q(\ph_{1},\ph_{2}) &\geq   \left\| \operatorname{det}(d\ph_{1})^{-1}-\operatorname{det}(d\ph_{2})^{-1} \right\|_{\infty}\\&\geq  \left\| \operatorname{det}(d\ph_{1})^{-1}\right\|_{\infty}-\left\| \operatorname{det}(d\ph_{2})^{-1} \right\|_{\infty}
\\&>\frac1{\varepsilon}+\operatorname{diam}(B)-\frac1{\varepsilon}\\&=\operatorname{diam}(B)\,.
\end{align*}
This yields a contradiction to $\ph_{1}$ being an element of $B$.
\end{proof}
Using this lemma, the proof of Theorem~\ref{thm:complete_metric} is \textit{verbatim} the same as in the one-dimensional periodic situation, see~\cite[Section 3]{EK2014a}.

\subsection{The Lie group $\DiffRd$.}

The following result concerning the Lie-group structure of $\DiffRd$ has been first shown in~\cite{HD2010a}.

\begin{thm}[Hermas \& Djebali~\cite{HD2010a}]
The space $\DiffRd$ as defined in \eqref{eq:Diff_infinity} is a \emph{regular Fr\'{e}chet-Lie group} with Lie-Algebra
$\mathfrak X_{\infty}(\RR^{d})= \HRd{\infty}$ the space of $H^{\infty}$-vector fields.
\end{thm}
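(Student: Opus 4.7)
The plan is to verify the three ingredients of a regular Fréchet--Lie group in turn: Fréchet manifold structure, smoothness of the group operations, and existence and smooth dependence of the evolution operator.

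First I would equip $\HRd{\infty} = \bigcap_{q} \HRd{q}$ with the projective limit topology generated by the countable family of Hilbert norms $\norm{\cdot}_{H^{q}}$; this makes $\HRd{\infty}$ a separable Fréchet space, and by the standard Nash-type theorem of Michor-Mumford (used already in \cite{MM2013a}) it is even a convenient vector space. The map $\varphi\mapsto\varphi-\id$ identifies $\DiffRd$ with the subset
\begin{equation*}
    \mathcal{U} := \set{f\in\HRd{\infty};\ \det(\id+df)>0}\,.
\end{equation*}
To see that $\mathcal{U}$ is open, I would use the Sobolev embedding part of Lemma \ref{lem:sobolev_embedding}: the evaluation $f\mapsto\det(\id+df)$ is continuous from $\HRd{\infty}$ into $C^{0}_{0}(\Rd,\RR)+1$, so positivity is preserved in a neighborhood of any $f\in\mathcal{U}$. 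The chart $\varphi\mapsto\varphi-\id$ then gives $\DiffRd$ the structure of a smooth Fréchet manifold, whose tangent space at $\id$ is naturally identified with $\HRd{\infty}$.

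The main obstacle, and the step where care is required, is smoothness of composition $\mu$ and inversion $\nu$. In contrast to Lemma 2.7 of \cite{IKT2013} (stated in our Lemma \ref{lem:composition}), where on the Banach completions only continuity survives, on $\DiffRd$ one recovers smoothness by the classical \emph{trading-regularity} trick: for any fixed $q>d/2+1$ and any $k\in\NN$, the composition restricts to a $C^{k}$ map
\begin{equation*}
    \mu:\DRd{q+k}\times\DRd{q}\longrightarrow \DRd{q}\,,
\end{equation*}
because each differentiation with respect to the left factor costs one derivative that is available in $H^{q+k}$. The iterated chain rule together with the multiplier estimate in Lemma \ref{lem:sobolev_embedding} gives the quantitative bounds. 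Passing to the projective limit $\HRd{\infty}=\bigcap_{q}\HRd{q}$, every $\mu^{(k)}$ becomes continuous into every $\HRd{q}$, so $\mu$ is $C^{\infty}$ in the Fréchet sense. For inversion $\nu$ I would apply the same argument combined with the implicit function theorem on each $\DRd{q}$ (valid there because $d\mu$ at $(\varphi,\varphi^{-1})$ is invertible as a continuous linear map on $\HRd{q}$ by Lemma \ref{lem:composition}), again taking the projective limit.

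Finally, regularity requires that for every smooth curve $u\in C^{\infty}(\RR,\HRd{\infty})$ the flow equation
\begin{equation*}
    \partial_{t}\varphi(t)=u(t)\circ\varphi(t),\qquad \varphi(0)=\id,
\end{equation*}
has a smooth global solution $\varphi\in C^{\infty}(\RR,\DiffRd)$ depending smoothly on $u$. I would first solve this ODE on each Hilbert manifold $\DRd{q}$ by the Picard--Lindelöf theorem (noting that $\varphi\mapsto u(t)\circ\varphi$ is Lipschitz on bounded subsets of $\DRd{q}$, again by Lemma \ref{lem:composition}). A Grönwall argument based on the estimate
\begin{equation*}
    \tfrac{d}{dt}\norm{\varphi(t)-\id}_{H^{q}}\leq C\bigl(\norm{\varphi(t)-\id}_{H^{q}}\bigr)\,\norm{u(t)}_{H^{q}}
\end{equation*}
together with the lower bound on $\det(d\varphi)$ supplied by Remark \ref{Rem:Jacobianbounded} prevents blow-up in any $H^{q}$-norm in finite time, hence the maximal solution exists on all of $\RR$ and remains in $\DRd{q}$. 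A bootstrap/no-loss-no-gain argument (trivial here since $u(t)\in H^{\infty}$ for every $t$ and regularity is preserved by the flow) then yields $\varphi(t)\in\DiffRd$, and smooth dependence on $u$ is inherited from the smoothness of the evolution operator on each $\DRd{q}$ and the projective limit structure. This gives all the axioms of a regular Fréchet--Lie group and identifies its Lie algebra with $\mathfrak{X}_{\infty}(\Rd)=\HRd{\infty}$.
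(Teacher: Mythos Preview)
The paper does not prove this theorem: it is quoted as an external result of Hermas and Djebali~\cite{HD2010a}, with no argument given in the text. There is therefore no ``paper's own proof'' to compare your sketch against.

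Regarding the sketch itself, the broad strategy (global chart, trading-regularity for the group operations, flow ODE on the Hilbert approximations, projective limit) is the standard one and is essentially what Hermas--Djebali and Michor--Mumford carry out. One step as written does not work, however: you invoke the implicit function theorem on a fixed $\DRd{q}$ to obtain smoothness of inversion, justifying it by invertibility of $d\mu$ via Lemma~\ref{lem:composition}. But $\mu$ is only continuous (not $C^{1}$) on $\DRd{q}\times\DRd{q}$, so the Banach IFT is not available there; Lemma~\ref{lem:composition} gives continuity and local boundedness of $R_{\varphi}$, not differentiability of composition. The correct route is the same trading-regularity device you used for $\mu$: one shows directly that $\nu:\DRd{q+k}\to\DRd{q}$ is $C^{k}$ (differentiating the identity $\varphi\circ\varphi^{-1}=\id$ after composing with a diffeomorphism of higher regularity), and only then passes to the projective limit. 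Similarly, your Lipschitz claim for $\varphi\mapsto u(t)\circ\varphi$ on $\DRd{q}$ does not follow from Lemma~\ref{lem:composition} alone; it requires one extra derivative on $u$, which you have since $u(t)\in H^{\infty}$, but the argument should say so.
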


The Lie-group structures of the related groups $\operatorname{Diff}_{c}(\RR^{d})$, $\operatorname{Diff}_{\mathcal S}(\RR^{d})$ and $\operatorname{Diff}_{\mathcal B}(\RR^{d})$ have been studied in~\cite{MM2013}. It has been shown, that $\operatorname{Diff}_{c}(\RR^{d})$ is a simple group. However, this result does not carry over to the larger group $\DiffRd$, as both $\operatorname{Diff}_{c}(\RR^{d})$ and $\operatorname{Diff}_{\mathcal S}(\RR^{d})$ are normal subgroups of it.

\section{The EPDiff equation}
\label{sec:EPDiff}

\subsection{Right-invariant metrics on Lie groups.}

A \emph{right-invariant} Riemannian metric on a  Lie group is defined by its value at the unit element $e$ of the
group, that is, by a inner product on the Lie algebra $\g$ of $G$. For historical reasons going back to Euler~\cite{Eul1765a}, this inner product is usually represented by a \emph{symmetric}\footnote{$A$ is symmetric if $(Au,v)=(Av,u)$ for all $u,v \in \g$, where the
round brackets stand for the dual pairing of elements of $\g$ and its dual space $\gstar$.} linear operator
\begin{equation*}
    A: \g \to \gstar ,
\end{equation*}
called the \emph{inertia operator}. Given a curve $g(t)$ on $G$, the \emph{Eulerian velocity} (also called the \emph{(right) logarithmic derivative}) is defined as
\begin{equation*}
  u := TR_{g^{-1}}. g_{t},
\end{equation*}
where $R_{g}$ stands for right translations on $G$. With these notations, the \emph{geodesic equations} can be written as
\begin{equation*}
  g_{t} = TR_{g}. u, \qquad u_{t} = -B(u,u),
\end{equation*}
where
\begin{equation*}
  B(u,v)= \frac12 \left(\ad(u)^{\top} v + \ad(v)^{\top} u \right), \quad u,v \in \g
\end{equation*}
and $\ad(u)^{\top}$ is the adjoint of the operator $\ad(u)$, with respect to the inner product on $\g$. The first order equation on $u$ is called the \emph{Arnold-Euler equation} and the bilinear operator $B$ is called the \emph{Arnold operator}.

As noted by Arnold~\cite{Arn1966}, the theory can be extended to diffeomorphism groups (and more generally \emph{Fr\'{e}chet-Lie groups}) but with two restrictions:
\begin{enumerate}
  \item The metric does not induce, generally, an isomorphism between the tangent space $T_{g}G$ and its dual;
  \item The \emph{geodesic spray} may not exist.
\end{enumerate}

\subsection{Right-invariant metrics on $\DiffRd$.}

To define a right invariant metric on $\DiffRd$, it suffices to prescribe an inner product on the Lie algebra $\HRd{\infty}$. Therefore let $A: \HRd{\infty} \to \HRd{\infty}$ be a $L^{2}$-symmetric, positive definite, continuous linear operator. Then $A$ induces a positive inner product on the Fr\'{e}chet space $\HRd{\infty}$, given by
\begin{equation*}
  \langle u_{1},u_{2}\rangle := \int_{\Rd}  Au_{1}\cdot u_{2} \, dx ,
\end{equation*}
where $\cdot$ denotes the Euclidean product on $\Rd$. We can use this inner product to define an inner product on each tangent space $T_{\ph}\DiffRd$. For any $\ph\in \DiffRd$ and $v_{1},v_{2}\in T_{\ph}\DiffRd$ it is defined by
\begin{equation*}
  G_{\ph}(v_{1},v_{2}) = \langle v_{1}\circ\ph^{-1}, v_{2}\circ\ph^{-1}\rangle = \int_{\Rd}  A (v_{1}\circ\ph^{-1})\cdot  (v_{2}\circ\ph^{-1}) dx\,.
\end{equation*}
Using the notation $A_{\ph}=R_{\ph}\circ A\circ R_{\ph^{-1}}$ we can rewrite -- via a change of variable -- the above equation to obtain
\begin{equation*}
  G_{\ph}(v_{1},v_{2}) = \int_{\Rd}  \left( A_\ph v_{1} \cdot v_{2} \right) J_{\ph}\, dx\,,
\end{equation*}
where $J_{\ph}$ denotes the Jacobian determinant of the coordinate change $\ph$. The operator $A$ is also called the \emph{inertia operator} of the metric $G$.

In the next theorem we will calculate the corresponding Euler equation, assuming that the inertia operator is invertible. This equation is known in the literature as the \emph{EPDiff equation} (Euler-Poincar\'{e} equation on the diffeomorphism group).

\begin{thm}[EPDiff equation]\label{thm:EPDiff}
If the inertia operator
\begin{equation*}
  A: \HRd{\infty} \to \HRd{\infty}
\end{equation*}
is invertible, then the Arnold operator $B$ exists and the corresponding Euler equation is given by
\begin{equation}\label{eq:EPDiff}
  m_{t} + \nabla_{u}m + \left(\nabla u\right)^{t}m + (\dive u) m  = 0, \quad m := Au.
\end{equation}
\end{thm}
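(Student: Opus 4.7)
The plan is to carry out the abstract Arnold recipe for $\DiffRd$ with the right-invariant metric induced by $\langle u,v\rangle = \int_{\Rd} Au\cdot v\,dx$ on the Lie algebra $\g=\HRd{\infty}$, and then read the EPDiff equation off the general Arnold--Euler equation $u_{t}=-B(u,u)$.

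The first step is to pin down the bracket. For a \emph{right-invariant} metric on a diffeomorphism group, the adjoint action on the Lie algebra equals the negative of the usual vector-field bracket, so
$$
 \ad(u)v = \nabla_{v}u - \nabla_{u}v, \qquad u,v\in \HRd{\infty},
$$
which lies in $\HRd{\infty}$ by the multiplier property of Lemma~\ref{lem:sobolev_embedding}. Next I would compute $\ad(u)^{*}$ with respect to the natural $L^{2}$-pairing $(m,v)=\int m\cdot v\,dx$. A componentwise rearrangement gives $(m,\nabla_{v}u)=((\nabla u)^{t}m,v)$, and a single integration by parts (legitimate because $C_{c}^{\infty}$ is dense in $\HRd{\infty}$ and elements of $\HRd{\infty}$ decay at infinity) gives $(m,\nabla_{u}v)=-(\nabla_{u}m+(\dive u)m,v)$. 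Summing these two identities yields
$$
 \ad(u)^{*}m = \nabla_{u}m + (\nabla u)^{t}m + (\dive u)\,m.
$$

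The second step is to transfer this back to the metric $\langle\cdot,\cdot\rangle$. By definition, $\ad(u)^{\top}$ satisfies $\int A(\ad(u)^{\top}v)\cdot w\,dx=\int Av\cdot \ad(u)w\,dx$ for every $w\in \HRd{\infty}$, which is precisely the identity $A\circ \ad(u)^{\top}=\ad(u)^{*}\circ A$. Since $A$ is assumed invertible on $\HRd{\infty}$, we may set
$$
 \ad(u)^{\top} = A^{-1}\circ \ad(u)^{*}\circ A\colon \HRd{\infty}\to\HRd{\infty},
$$
and consequently $B(u,v)=\tfrac12(\ad(u)^{\top}v+\ad(v)^{\top}u)$ is a well-defined bilinear map on $\g\times\g$, which settles the existence part of the statement.

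Finally, since $B(u,u)=\ad(u)^{\top}u$, the Arnold--Euler equation $u_{t}=-B(u,u)$ applied with $A$ on both sides gives $Au_{t}=-\ad(u)^{*}(Au)$; setting $m:=Au$ and inserting the formula for $\ad(u)^{*}$ produces exactly~\eqref{eq:EPDiff}. The only genuinely delicate point I anticipate is pinning down the sign convention for $\ad$ on a right-invariant group (a sign mistake here would flip every term of EPDiff); once that is fixed, the remainder is a routine integration-by-parts computation that relies only on the density of $C_{c}^{\infty}$ in $\HRd{\infty}$ and the invertibility of $A$.
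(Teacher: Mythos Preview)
Your proof is correct and follows essentially the same route as the paper: both compute the adjoint of $\ad(u)$ via the same integration-by-parts identities on the bracket $\nabla_{v}u-\nabla_{u}v$, and then invoke invertibility of $A$ to conclude. The only cosmetic difference is that you factor the computation through the $L^{2}$-coadjoint $\ad(u)^{*}$ and the relation $\ad(u)^{\top}=A^{-1}\ad(u)^{*}A$, whereas the paper computes $\langle u,\ad(v)w\rangle$ directly; the underlying calculus is identical.
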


\begin{proof}
Let $\nabla$ be the canonical covariant derivative on $\RR^{d}$. For $u,v,w\in \CSCRd$ we calculate
\begin{align*}
  \langle u, \ad(v)w \rangle & = -\int_{\Rd} Au \cdot [v,w] \,dx
  \\
  & = -\int_{\Rd} Au \cdot \left( \nabla_{v}w - \nabla_{w}v \right) \,dx
  \\
  & = \int_{\Rd} \left[ - \grad (Au \cdot w) \cdot v + \left( \nabla_{v}Au \right) \cdot w + (\nabla v )^{t} Au \cdot w \right] \,dx
  \\
  & = \int_{\Rd} \left[ (Au \cdot w) \dive v + \left( \nabla_{v}Au \right) \cdot w + (\nabla v )^{t} Au \cdot w \right] \,dx
  \\
  & = \int_{\Rd}\left( \nabla_{v}Au + (\nabla v )^{t} Au + (\dive v) Au \right) \cdot w \,dx.
\end{align*}
Since $\CSCRd$ is dense in $\HRd{\infty}$ and both sides of the equation are continuous, the same relation holds for $u,v,w\in \HRd{\infty}$. We deduce therefore that
\begin{equation*}
  \ad(v)^{\top} u = A^{-1}\left( \nabla_{v}Au + (\nabla v )^{t} Au + (\dive v) Au \right),
\end{equation*}
and the Euler equation~\eqref{eq:EPDiff} follows.
\end{proof}

\begin{rem}
Note that the hypothesis that the inertia operator is invertible is only a sufficient condition for the existence of the operator $B$. A weaker hypothesis is that the symmetric part of the bilinear operator $\ad(v)^{\top} u$ is defined, which is the case if
\begin{equation*}
  \nabla_{v}Au + (\nabla v )^{t} Au + (\dive v) Au + \nabla_{u}Av + (\nabla u )^{t} Av + (\dive u) Av
\end{equation*}
belongs to the range of $A$ for all $u,v \in \HRd{\infty}$.
\end{rem}

The \emph{geodesic spray}, if it exists, is defined as the Hamiltonian vector field of the energy function
\begin{equation*}
  \mathcal{E}(\varphi, v) := \frac{1}{2} G_{\ph}(v,v), \quad (\varphi, v) \in T\DiffRd
\end{equation*}
for the (weak) symplectic structure on $T\DiffRd$ induced by the metric. As pointed out by Arnold~\cite{Arn1966}, the geodesic spray exists as soon as the Arnold operator $B$ exists.

\begin{thm}
  Given a right-invariant metric on $T\DiffRd$, the geodesic spray exists if and only if the Arnold operator $B$ exists.
\end{thm}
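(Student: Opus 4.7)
The plan is to work in right-trivialized coordinates, identifying $T\DiffRd \cong \DiffRd \times \HRd{\infty}$ via $(\ph, v) \mapsto (\ph, u)$ with $u := v \circ \ph^{-1}$. In these coordinates, right-invariance gives $\mathcal{E}(\ph, u) = \frac{1}{2}\langle u, u\rangle$, independent of $\ph$. The weak symplectic form $\omega_{G}$ on $T\DiffRd$ is by definition the pullback of the canonical symplectic form on $T^{*}\DiffRd$ by the index-lowering bundle map $G^{\flat}(\ph, v) := G_{\ph}(v, \cdot)$, and its expression in $(\ph, u)$ coordinates is computable directly from the formula for $G_{\ph}$.

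Next I would write Hamilton's equation $\iota_{S}\omega_{G} = d\mathcal{E}$ explicitly. A geodesic spray is by definition a second-order vector field on $\DiffRd$, so it has the form $S(\ph, v) = (v, F(\ph, v))$ for some smooth $F$, and right-invariance of $G$ and $\mathcal{E}$ forces $F(\ph, v) = f(u)\circ\ph$ for some map $f \colon \HRd{\infty} \to \HRd{\infty}$. Pairing $\iota_{S}\omega_{G} - d\mathcal{E}$ with an arbitrary test vector of the form $(w\circ\ph,\, \delta u) \in T_{(\ph, v)}(T\DiffRd)$ reduces, after a change of variables and another use of right-invariance, to the identity
\begin{equation*}
  \langle f(u), w\rangle = -\langle u, \ad(u)w\rangle, \qquad \text{for all } u, w \in \HRd{\infty},
\end{equation*}
which is exactly the statement that $f(u) = -\ad(u)^{\top} u = -B(u,u)$ exists in $\HRd{\infty}$.

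From here the equivalence follows at once. If $B$ exists, then setting $S(\ph, v) := (v,\, -B(u,u)\circ\ph)$ defines, by the explicit formula for $\ad(v)^{\top} u$ obtained in the proof of Theorem~\ref{thm:EPDiff} together with smoothness of right composition, a smooth vector field on $T\DiffRd$ which solves Hamilton's equation; hence it is the geodesic spray. Conversely, if the spray $S$ exists, evaluating $\iota_{S}\omega_{G} = d\mathcal{E}$ along fibers over $\ph = e$ shows that $\ad(u)^{\top} u$, and thus $B$, must take values in $\HRd{\infty}$ for every $u$.

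The main obstacle is the weakness of $\omega_{G}$: the map $v \mapsto G_{\ph}(v, \cdot)$ is only injective from $T_{\ph}\DiffRd$ into $T_{\ph}^{*}\DiffRd$, and in general not surjective. Consequently, one cannot produce the Hamiltonian vector field of $\mathcal{E}$ by abstract nonsense — it has to be constructed by hand, and the formula $S(\ph, v) = (v, -B(u,u)\circ\ph)$ is exactly the candidate that may or may not lie in $T(T\DiffRd)$. The content of the theorem is then that this candidate is defined precisely when $B(u,u)$ lies in $\HRd{\infty}$ for all $u$, i.e., when the Arnold operator $B$ exists.
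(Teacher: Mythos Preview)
Your approach is essentially the one the paper takes: exhibit the explicit candidate for the spray and observe that it is well-defined precisely when $B$ is. However, your formula for the spray is wrong. In the natural chart $(\ph,v)$ on $T\DiffRd$, the second component of the geodesic spray is not $-B(u,u)\circ\ph$ but $\bigl(\nabla_u u - B(u,u)\bigr)\circ\ph$, as the paper writes. The discrepancy comes from mixing coordinate systems: you announce right-trivialized coordinates $(\ph,u)$, but then write the spray in the $(\ph,v)$ chart. Passing from the Euler equation $u_t = -B(u,u)$ to $v_t = \partial_t(u\circ\ph)$ produces the convective term $\nabla_u u \circ \ph$. Accordingly, the identity obtained from Hamilton's equation is
\begin{equation*}
  \langle f(u), w\rangle \;=\; \langle \nabla_u u, w\rangle \;-\; \langle u, \ad(u)w\rangle,
\end{equation*}
not $\langle f(u), w\rangle = -\langle u, \ad(u)w\rangle$ as you claim.

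This slip does not actually break the equivalence you are after, since $\nabla_u u$ always lies in $\HRd{\infty}$; hence $f(u)\in\HRd{\infty}$ if and only if $B(u,u)\in\HRd{\infty}$. But your explicit formula $S(\ph,v)=(v,-B(u,u)\circ\ph)$ is incorrect, and in the converse direction your argument would recover $B(u,u)=-S_{\id}(u)$ rather than the correct $B(u,u)=\nabla_u u - S_{\id}(u)$ that the paper obtains before polarizing.
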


\begin{proof}[Sketch of proof]
If the spray exists, it is uniquely defined and right-invariant. It can be checked that, if the operator $B$ exists, then the spray is given by
\begin{equation*}
  F(\varphi,v) := (\varphi, v, v, S_{\varphi}(v)),
\end{equation*}
where $S_{\varphi}(v) := R_{\varphi} \circ S \circ R_{\varphi^{-1}}(v)$ and
\begin{equation*}
  S(u) := \nabla_{u}u - B(u,u).
\end{equation*}
Conversely, if the spray exists, then a solution $B$ of the equation
\begin{equation*}
  \langle B(u,v), w \rangle = \frac{1}{2} \left( \langle v, \ad(u)w \rangle + \langle u, \ad(v)w \rangle \right)
\end{equation*}
is given by the polarization of the quadratic operator
\begin{equation*}
  B(u,u) := \nabla_{u}u - S_{\id}(u). \qedhere
\end{equation*}
\end{proof}

\begin{expl}
In the following we present an example of a right invariant metric, where the Arnold operator $B$ -- and thus the geodesic equation -- does not exist. For this we consider the homogeneous $\dot H^1$-metric on the diffeomorphism group of the real line $\DiffRone$:
\begin{equation*}
  \langle u, v \rangle_{\dot H^1}:=\int_{\RR} u_{x} \cdot v_{x} \,dx\,,\quad \text{ for } u,v \in H^{\infty}(\RR,\RR)\,.
\end{equation*}
This defines a Riemannian metric on $\DiffRone$, since there are no constant vector fields in the Lie-Algebra $H^{\infty}(\RR)$.

One can formally derive the equation for the Arnold bilinear operator to obtain:
\begin{align*}
B(u,v) = \int_{-\infty}^x u_{x} \cdot  v_{x}\, dy + (u \cdot  v)_{x}\,.
\end{align*}
Thus, if $B(u,v)$ is an element of the Lie algebra $H^{\infty}(\RR)$, then
\begin{equation*}
   \int_{-\infty}^{\infty} u_{x} \cdot  v_{x} =0\,.
\end{equation*}
For $u=v$ this would imply that $u_{x}=0$. Therefore the Arnold bilinear operator does not exist for all $u,v\in H^{\infty}(\RR)$.

In the article~\cite{BBM2014} possible extensions of the group $\DiffRone$ have been discussed in order to guarantee the existence of $B$. However, this is only possible if one uses either the group of compactly supported or rapidly decreasing diffeomorphisms, but not for $H^{\infty}$-diffeomorphisms that are treated in this article.

Note that a similar phenomenon would also occur for any \emph{homogeneous metric} of order $s>0$ on $\DiffRd$.

This is in contrast to the periodic case. To study the $\dot H^1$ metric in the periodic case one has to pass to the homogeneous space
$\operatorname{Diff}(S^1)/S^1$ of diffeomorphisms modulo rotations. Then the geodesic equation exists and is given by the Hunter-Saxton equation. Furthermore, the induced geometry of the $\dot H^1$-metric is an open subset of an infinite dimensional sphere, cf.~\cite{Len2007, Len2008}.
\end{expl}

\subsection{Sobolev metrics on $\DiffRd$.}

A class of metrics of particular importance is given by the family of (fractional order) Sobolev metrics.
As described in the previous section we only need to define the metric on the Lie algebra $\HRd{\infty}$ and obtain a metric on all of
$\DiffRd$ via right translation. We will first consider the Sobolev metric of integer order $k\in \NN$. Therefore we define the inner product:
\begin{equation}\label{eq:Hk-inner-product}
  \langle u_{1},u_{2}\rangle_{H^{k}} : = \sum_{j=0}^{k} \int \nabla^{j}u_{1} \cdot \nabla^{j}u_{2} \, dx\,,
\end{equation}
where $u_{1},u_{2}\in \HRd{\infty}$ and $\cdot$ is the natural extension of the euclidean inner product on $\Rd$ to higher tensors.
The \emph{Laplacian} of a vector field $u$ will be defined as
\begin{equation*}
  \Delta u := \tr \nabla^{2}u = \sum_{i,j}\nabla^{2}_{\partial_{i},\partial_{j}}u.
\end{equation*}
Due to the identity
\begin{equation*}
  \dive (\nabla u_{1} \cdot u_{2}) = \Delta u_{1} \cdot u_{2} + \nabla u_{1} \cdot \nabla u_{2}\,,
\end{equation*}
the $H^{1}$ inner product can be rewritten as
\begin{equation*}
  \langle u_{1},u_{2}\rangle_{H^{1}} =  \int (1 - \Delta)u_{1} \cdot u_{2} \, dx\, .
\end{equation*}
This suggests to introduce a \emph{modified but norm-equivalent} version of the $H^{k}$--inner product~\eqref{eq:Hk-inner-product}, namely
\begin{equation*}
  \langle u_{1},u_{2}\rangle_{H^{k}} : = \int (1 - \Delta)^{k}u_{1} \cdot u_{2} \, dx\, .
\end{equation*}
The fact that both norms are equivalent is based on the inequality
\begin{equation*}
  \left(1 + \abs{\xi}^{2}\right)^{k} \lesssim \sum_{j=0}^{k} \abs{\xi}^{2j} \lesssim \left(1 + \abs{\xi}^{2}\right)^{k}.
\end{equation*}

Similarly as in Section~\ref{subsec:Hq}, this definition can be extended via the Fourier transform to obtain Sobolev metrics of non-integer order $s\ge 0$:
\begin{equation}\label{equation:Sob_norm}
  \langle u_{1},u_{2}\rangle_{H^{s}} : = \int \left(1 - \Delta \right)^{s}u_{1} \cdot u_{2} \, dx = \int \left(1 + \abs{\xi}^{2}\right)^{s} \hat{u}_{1} \cdot \hat{u}_{2} \, dx,
\end{equation}
where $\cdot$ is the Hermitian inner product on $\CC^{d}$.

In the spirit of the previous section, it is natural to consider the corresponding inertia operator $A$ of the norm \eqref{equation:Sob_norm}:
\begin{equation*}
 A= \Lambda^{2s} := \left(1 + \abs{\xi}^{2}\right)^{s}(D).
\end{equation*}
This operator belongs to the family of \emph{Fourier multipliers of class $\mathcal{S}^{2s}$}, which is defined in following section.

\section{Fourier multipliers of class $\mathcal{S}^{r}$}
\label{sec:Fourier-multipliers}

It is a well-known fact (see for instance~\cite[Appendix A]{EM1970}) that, given any \emph{differential operator} $A$ of order $r$ with \emph{smooth coefficients}, the mapping
\begin{equation*}
	\varphi \mapsto A_{\varphi}:= R_{\varphi} \circ A \circ R_{\varphi^{-1}} , \quad \DRd{q} \to \mathcal{L}(\HRd{q},\HRd{q-r})
\end{equation*}
is smooth for $q > 1 + d/2$ and $q-r \ge 0$. It is the aim of this section to extend this result to a larger class of operators, the so-called \emph{Fourier multipliers of class $\mathcal{S}^{r}$}, which are defined below.

Let $A$ be a differential operator with constant coefficients, then
\begin{equation*}
  \widehat{(Au)}(\xi) = a(\xi) \hat{u}(\xi), \qquad u \in \HRd{\infty},
\end{equation*}
where $a: \Rd \to \mathcal{L}(\CC^{d})$ is a polynomial function. This observation suggests to define, for a more general function $a: \Rd \to \mathcal{L}(\CC^{d})$, a linear operator by the following formula
\begin{equation*}
  a(D)u:= \mathcal{F}^{-1}(a\,\hat u),\quad u \in \HRd{\infty},
\end{equation*}
where $\mathcal{F}$ is the Fourier transform on $\Rd$.
Such an operator, $a(D)$ (also noted $\op{a(\xi)}$), is called a \emph{Fourier multiplier with symbol $a$}.
Of course, some regularity conditions are required on the symbol $a$ to insure that the operator is well-defined.
We redirect to Appendix~\ref{sec:translation-invariant-operators} for a detailed discussion on these conditions.
In the following, we will restrict ourselves to a class of symbols for which this operation is well-defined on $\HRd{\infty}$
and leads to operators with nice properties.

\begin{defn}
Given $r \in \RR$, a Fourier multiplier $a(D)$ is of class $\mathcal{S}^{r}$ iff $a\in C^\infty(\Rd,\mathcal{L}(\CC^{d}))$ and satisfies moreover the following condition:
\begin{equation*}
  \norm{\partial^{\alpha}a(\xi)} \lesssim \left( 1 + \abs{\xi}^{2}\right)^{(r-\abs{\alpha})/2},
\end{equation*}
for each $\alpha \in \NN^{d}$, where $\abs{\alpha} := \alpha_{1} + \dotsb + \alpha_{d}$.
\end{defn}

\begin{expl}
Any linear differential operator of order $r$ with constant coefficients is in this class. Furthermore, $\op{(1+\abs{\xi}^{2})^{r/2}}$ belongs to this class.
\end{expl}

\begin{rem}
Note that a Fourier multiplier $a(D)$ of class $\mathcal{S}^{r}$ extends to a \emph{bounded linear operator}
\begin{equation*}
  \HRd{q} \to \HRd{q-r}
\end{equation*}
for any $q\in\RR$, where $\HRd{q}$ is defined as the \emph{Banach dual space} of $\HRd{-q}$, if $q <0$.
\end{rem}

In this paper, we are mainly interested by inner products of the form
\begin{equation*}
  \langle u_{1},u_{2}\rangle := \int_{\Rd}  Au_{1}\cdot u_{2} \, dx ,
\end{equation*}
on $\HRd{\infty}$, where $A = a(D)$ is a Fourier multiplier of class $\mathcal{S}^{r}$. A necessary and sufficient condition for such an inner product to be \emph{$L^{2}$-symmetric and positive definite} is that the symbol $a$ is \emph{Hermitian and positive definite} almost everywhere. Moreover we will require an \emph{ellipticity condition} on $A$ in order to prove the existence and smoothness of the spray of the week right-invariant metric generated by the inertia operator $a(D)$. A detailed discussion on ellipticity for Fourier multipliers can be found in Appendix~\ref{sec:elliptic-Fourier-multipliers}. For our purpose, we will adopt the following definition.

\begin{defn}
A Fourier multiplier $a(D)$ in the class $\mathcal{S}^{r}$ is called \emph{elliptic} if $a(\xi)\in\mathcal{GL}(\CC^{d})$ for all $\xi\in\Rd$ and
\begin{equation*}
  \norm{[a(\xi)]^{-1}} \lesssim \left( 1 + \abs{\xi}^{2}\right)^{-r/2}, \qquad \forall \xi \in \Rd.
\end{equation*}
\end{defn}

\begin{rem}
A sufficient (but not necessary) condition for ellipticity is the following
\begin{equation*}
  \abs{\det a(\xi)} \gtrsim \left( 1 + \abs{\xi}^{2}\right)^{-dr/2},\qquad \forall \xi \in \Rd.
\end{equation*}
\end{rem}

\begin{rem}
An elliptic Fourier multiplier of class $\mathcal{S}^{r}$ induces a \emph{bounded isomorphism} between $\HRd{q}$ and $\HRd{q-r}$ for all $q\in \RR$.
\end{rem}

We summarize our considerations by introducing the following class of inertia operators which will be denoted $\mathcal{E}^{r}$.

\begin{defn}\label{def:class-Er}
An operator $A\in \mathcal{L}(\HRd{\infty})$ is in the class $\mathcal{E}^{r}$ iff the following conditions are satisfied:
\begin{enumerate}
  \item $A = a(D)$ is a Fourier multiplier of class $\mathcal{S}^{r}$;
  \item $A = a(D)$ is elliptic;
  \item $a(\xi)$ is Hermitian and positive definite for all $\xi \in \Rd$.
\end{enumerate}
\end{defn}

Since a positive definite Hermitian matrix has a unique positive square root which depends smoothly on its coefficients, we can define formally the square root $B:= \op{a(\xi)^{1/2}}$ of an operator $A$ in the class $\mathcal{E}^{r}$.

\begin{lem}\label{lem:square-root}
The positive square root $B$ of an operator $A$ in the class $\mathcal{E}^{r}$ belongs to the class $\mathcal{E}^{r/2}$.
\end{lem}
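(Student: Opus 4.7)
The plan is to verify the three conditions of Definition~\ref{def:class-Er} for the symbol $b(\xi):=a(\xi)^{1/2}$ with exponent $r/2$. Conditions (2) and (3) come essentially for free: by construction $b(\xi)$ is Hermitian positive definite, and since for a Hermitian positive matrix the operator norm equals the largest eigenvalue, so $\norm{M^{1/2}}=\norm{M}^{1/2}$, the ellipticity of $A$ gives
\begin{equation*}
\norm{b(\xi)^{-1}} = \norm{a(\xi)^{-1}}^{1/2} \lesssim (1+\abs{\xi}^{2})^{-r/4}.
\end{equation*}
All the work is therefore in establishing the symbol estimates
\begin{equation*}
\norm{\partial^{\alpha}b(\xi)} \lesssim (1+\abs{\xi}^{2})^{(r/2-\abs{\alpha})/2}, \qquad \alpha \in \NN^{d},
\end{equation*}
which will simultaneously give $b\in C^{\infty}(\Rd,\mathcal{L}(\CC^{d}))$.

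To normalise the $\xi$-dependence of the spectrum, I first introduce the rescaled symbol $\tilde a(\xi):=(1+\abs{\xi}^{2})^{-r/2}a(\xi)$. A direct Leibniz computation using $\abs{\partial^{\alpha}(1+\abs{\xi}^{2})^{-r/2}} \lesssim (1+\abs{\xi}^{2})^{(-r-\abs{\alpha})/2}$ and the $\mathcal{S}^{r}$ bounds on $a$ shows $\tilde a \in \mathcal{S}^{0}$. Moreover, the case $\alpha=0$ of the symbol bound gives $\norm{a(\xi)}\lesssim(1+\abs{\xi}^{2})^{r/2}$ while ellipticity gives $\norm{a(\xi)^{-1}}\lesssim(1+\abs{\xi}^{2})^{-r/2}$, so the Hermitian positive matrix $\tilde a(\xi)$ has spectrum in a fixed compact interval $[c_{1},c_{2}]\subset(0,\infty)$, uniformly in $\xi$. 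Since $(1+\abs{\xi}^{2})^{r/4}$ is a scalar that commutes with $\tilde a(\xi)$, we have $b(\xi)=(1+\abs{\xi}^{2})^{r/4}\tilde b(\xi)$ with $\tilde b(\xi):=\tilde a(\xi)^{1/2}$, so it suffices to prove $\tilde b \in \mathcal{S}^{0}$ and then restore the weight by a final Leibniz expansion.

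The crucial step is to handle derivatives of the nonlinear map $M\mapsto M^{1/2}$ via the Dunford--Riesz functional calculus. Fix a smooth positively oriented contour $\gamma\subset\CC\setminus(-\infty,0]$ enclosing $[c_{1},c_{2}]$; the uniform spectral bound ensures that the resolvent $R(z,\xi):=(zI-\tilde a(\xi))^{-1}$ is uniformly bounded on $\gamma\times\Rd$, and
\begin{equation*}
\tilde b(\xi) = \frac{1}{2\pi i}\oint_{\gamma} z^{1/2}\, R(z,\xi)\, dz.
\end{equation*}
Differentiating under the integral and iterating the identity $\partial_{i}R=R(\partial_{i}\tilde a)R$, each derivative $\partial^{\alpha}\tilde b(\xi)$ becomes a finite sum of contour integrals of products of the form $R(\partial^{\beta_{1}}\tilde a)R\cdots(\partial^{\beta_{k}}\tilde a)R$ with $\beta_{1}+\cdots+\beta_{k}=\alpha$ and $\abs{\beta_{j}}\geq 1$. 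Combined with the uniform resolvent bound on $\gamma$ and $\norm{\partial^{\beta_{j}}\tilde a(\xi)}\lesssim(1+\abs{\xi}^{2})^{-\abs{\beta_{j}}/2}$, this yields $\norm{\partial^{\alpha}\tilde b(\xi)}\lesssim(1+\abs{\xi}^{2})^{-\abs{\alpha}/2}$, as required.

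The main obstacle is precisely this last symbol estimate, namely controlling arbitrarily many derivatives of the square root; it is resolved by the rescaling plus functional-calculus trick, which linearises $M\mapsto M^{1/2}$ into an integral of resolvents whose norms are uniformly controlled once the spectrum has been normalised into a fixed compact set.
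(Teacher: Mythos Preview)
Your proof is correct and the argument is clean, but it follows a genuinely different route from the paper's. The paper never rescales the symbol and does not use any holomorphic functional calculus; instead it differentiates the identity $b(\xi)^{2}=a(\xi)$ directly and proceeds by induction on $\abs{\alpha}$. The Leibniz expansion of $\partial^{\alpha}a$ isolates the top-order term in the Sylvester form $b(\partial^{\alpha}b)+(\partial^{\alpha}b)b$, and a short linear-algebra lemma (if $bx+xb=c$ with $b$ Hermitian positive definite then $\norm{x}\lesssim\norm{b^{-1}}\norm{c}$, proved by diagonalising $b$) converts this into the bound $\norm{\partial^{\alpha}b}\lesssim\norm{b^{-1}}\bigl(\norm{\partial^{\alpha}a}+\text{lower order}\bigr)$. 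Combined with the already established estimate $\norm{b(\xi)^{-1}}\lesssim(1+\abs{\xi}^{2})^{-r/4}$ and the induction hypothesis, this closes the loop.

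The trade-off is clear: the paper's Sylvester-equation argument is entirely elementary and self-contained (no contour integrals, no resolvent bounds), but it is tailored to the square root. Your rescaling-plus-Dunford--Riesz argument is slightly heavier machinery, yet it immediately generalises to $f(a(\xi))$ for any $f$ holomorphic near the spectrum of the normalised symbol, and it makes transparent \emph{why} the estimates go through: once the spectrum is pinned to a fixed compact set, the resolvent is uniformly bounded on a fixed contour, and all the $\xi$-decay comes from the derivatives of $\tilde a$ alone.
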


The proof of this result relies on the following elementary lemma.

\begin{lem}\label{lem:linear-algebra}
Let $a,b,x \in \mathrm{M}_{d}(\CC)$ be three matrices, where $b$ is assumed to be \emph{Hermitian and positive definite}. Suppose in addition that
\begin{equation*}
  bx + xb = a.
\end{equation*}
Then
\begin{equation*}
  \norm{x} \le \sqrt{\frac{d}{2}} \norm{b^{-1}} \norm{a},
\end{equation*}
where $\norm{\cdot}$ denotes the Frobenius norm, i.e. $\norm{x} := \sqrt{\tr x x^{*}}$.
\end{lem}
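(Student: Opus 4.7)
The plan is to test the identity $bx+xb=a$ against $x$ in the Frobenius inner product $\langle M,N\rangle := \tr(MN^{*})$, and exploit the positivity of $b$ to extract a coercive bound on $\norm{x}^{2}$. Concretely, I would right-multiply the equation by $x^{*}$ and take the trace, obtaining
\begin{equation*}
\tr(bxx^{*}) + \tr(xbx^{*}) = \tr(ax^{*}).
\end{equation*}
By the cyclic invariance of the trace, the left-hand side equals $\tr\bigl(b(xx^{*}+x^{*}x)\bigr)$.

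Next I would observe that $M:=xx^{*}+x^{*}x$ is Hermitian and positive semi-definite, with $\tr(M)=2\norm{x}^{2}$. Diagonalising $b=U\operatorname{diag}(\lambda_{1},\ldots,\lambda_{d})U^{*}$ with each $\lambda_{k}>0$, and using that the diagonal entries of $U^{*}MU$ are non-negative (since $U^{*}MU$ is again positive semi-definite), one deduces
\begin{equation*}
\tr(bM) = \sum_{k=1}^{d}\lambda_{k}\bigl(U^{*}MU\bigr)_{kk} \;\ge\; \lambda_{\min}(b)\,\tr(M) \;=\; 2\lambda_{\min}(b)\norm{x}^{2}.
\end{equation*}
For the right-hand side, the Cauchy--Schwarz inequality in the Frobenius inner product yields $\abs{\tr(ax^{*})}\le \norm{a}\,\norm{x}$.

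Combining these two estimates gives $2\lambda_{\min}(b)\norm{x}^{2}\le \norm{a}\norm{x}$, hence
\begin{equation*}
\norm{x} \le \frac{1}{2\lambda_{\min}(b)}\norm{a}.
\end{equation*}
To rewrite this in the form stated in the lemma, I would use that $1/\lambda_{\min}(b)$ equals the spectral radius of $b^{-1}$, which is dominated by its Frobenius norm $\norm{b^{-1}}$; together with the elementary inequality $\tfrac{1}{2} \le \sqrt{d/2}$ valid for every integer $d\ge 1$, this produces the desired bound $\norm{x}\le \sqrt{d/2}\,\norm{b^{-1}}\,\norm{a}$.

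I do not foresee any substantive obstacle. The only slightly subtle point is the scalar inequality $\tr(bM)\ge \lambda_{\min}(b)\tr(M)$ for Hermitian positive semi-definite $M$, which is a one-line consequence of diagonalising $b$ and noting that the diagonal entries of a positive semi-definite matrix are non-negative. It is worth remarking that the argument yields the strictly stronger bound $\norm{x}\le \tfrac{1}{2}\norm{b^{-1}}_{\mathrm{op}}\norm{a}$, which is more than enough for the stated conclusion.
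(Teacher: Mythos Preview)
Your proof is correct, and it differs from the paper's argument in a useful way. The paper proceeds by unitarily diagonalising $b$ (unitary conjugation being a Frobenius isometry), which reduces the equation to $x_{i}^{j}=(\lambda_{i}+\lambda_{j})^{-1}a_{i}^{j}$; it then bounds $\sum_{i,j}\abs{x_{i}^{j}}^{2}$ entrywise via $(\lambda_{i}+\lambda_{j})^{-2}\le(2\lambda_{i}\lambda_{j})^{-1}$ and $(\tr b^{-1})^{2}\le d\,\norm{b^{-1}}^{2}$. Your route instead pairs the equation with $x$ in the Frobenius inner product and uses the coercivity inequality $\tr(bM)\ge\lambda_{\min}(b)\tr(M)$ for positive semi-definite $M$. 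This is more conceptual and, as you note, gives the dimension-free bound $\norm{x}\le\tfrac{1}{2}\norm{b^{-1}}_{\mathrm{op}}\norm{a}$, which is actually sharp (take $b=I$); the final step $\tfrac{1}{2}\le\sqrt{d/2}$ then discards strength that the paper's chain of inequalities never had. For the purposes of the paper either constant suffices, but your argument shows that the $\sqrt{d/2}$ is an artefact of the entrywise estimate rather than the true dependence.
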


\begin{proof}
Since conjugation of the unitary group $\mathbb{U}(d)$ on $\mathrm{M}_{d}(\CC)$ is an isometry for the Frobenius norm, we can assume that $b = \mathrm{diag}(\lambda_{1},\dotsc , \lambda_{d})$. Thus $x_{i}^{j} = (\lambda_{i}+\lambda_{j})^{-1}a_{i}^{j}$ and
\begin{equation*}
  \norm{x}^{2} = \sum_{i,j} \abs{x_{i}^{j}}^{2} \le \sum_{i,j} \frac{1}{2\lambda_{i}\lambda_{j}}\abs{a_{i}^{j}}^{2} \le \frac{1}{2} (\tr b^{-1})^{2}\norm{a}^{2} \le \frac{d}{2} \norm{b^{-1}}^{2}\norm{a}^{2},
\end{equation*}
which yields the result.
\end{proof}

\begin{proof}[Proof of Lemma~\ref{lem:square-root}]
Given $\xi \in \Rd$, let $b(\xi)$ be the positive square root of $a(\xi)$. Using the Frobenius norm, we get
\begin{equation*}
  \norm{b(\xi)}^{2} = \tr b(\xi)^{2} = \tr a(\xi) \le \sqrt{d}\norm{a(\xi)}.
\end{equation*}
Thus,
\begin{equation*}
  \norm{b(\xi)} \lesssim \left( 1 + \abs{\xi}^{2}\right)^{r/4},
\end{equation*}
and similarly
\begin{equation*}
  \norm{[b(\xi)]^{-1}} \lesssim \left( 1 + \abs{\xi}^{2}\right)^{-r/4}.
\end{equation*}
We will now show by induction on $\abs{\alpha}$ that
\begin{equation}\label{eq:Sr-inductive-hyp}
  \norm{\partial^{\alpha}b(\xi)} \lesssim \left( 1 + \abs{\xi}^{2}\right)^{(r/2-\abs{\alpha})/2},
\end{equation}
for all $\alpha \in \NN^{d}$. For $\abs{\alpha} = 1$, we have
\begin{equation*}
  \partial_{i}a(\xi) = b(\xi) \left(\partial_{i}b(\xi)\right) + \left(\partial_{i}b(\xi)\right) b(\xi)
\end{equation*}
and using Lemma~\ref{lem:linear-algebra}, we conclude that
\begin{equation*}
  \norm{\partial_{i}b(\xi)} \lesssim \norm{[b(\xi)]^{-1}} \norm{\partial_{i}a(\xi)} \lesssim \left( 1 + \abs{\xi}^{2}\right)^{(r/2-1)/2}.
\end{equation*}
Suppose now that~\eqref{eq:Sr-inductive-hyp} is true for all $\abs{\alpha} \le n$ and let $\abs{\alpha} = n+1$. We have
\begin{equation*}
  \partial^{\alpha}a(\xi) = b(\xi)(\partial^{\alpha}b(\xi)) + (\partial^{\alpha}b(\xi))b(\xi) + \sum_{\abs{\beta_{1}} + \abs{\beta_{2}} = n+1} \partial^{\beta_{1}}b(\xi)\,\partial^{\beta_{2}}b(\xi),
\end{equation*}
where
\begin{equation*}
  \norm{\partial^{\beta_{1}}b(\xi)\,\partial^{\beta_{2}}b(\xi)} \lesssim \norm{\partial^{\beta_{1}}b(\xi)}\norm{\partial^{\beta_{2}}b(\xi)}\lesssim \left( 1 + \abs{\xi}^{2}\right)^{(r-(n+1))/2},
\end{equation*}
for $1 \le \abs{\beta_{1}}, \abs{\beta_{2}} \le n$, by the induction hypothesis. Thus, using again Lemma~\ref{lem:linear-algebra}, we conclude that
\begin{align*}
  \norm{\partial^{\alpha}b(\xi)} & \lesssim \norm{[b(\xi)]^{-1}} \norm{\partial^{\alpha}a(\xi) - \sum_{\abs{\beta_{1}}+ \abs{\beta_{2}} = n+1} \partial^{\beta_{1}}b(\xi)\,\partial^{\beta_{2}}b(\xi)} \\
    & \lesssim \left( 1 + \abs{\xi}^{2}\right)^{(r/2-(n+1))/2},
\end{align*}
which achieves the proof.
\end{proof}

We will now turn to the main result of this section which is the following theorem.

\begin{thm}\label{thm:smoothness-of-conjugates}
Let $A = a(D)$ be a Fourier multiplier of class $\mathcal{S}^{r}$ on $\Rd$, with $r \ge 1$. Let $q > 1 + d/2$ and $q \ge r$.
Then the mapping
\begin{equation*}
 \varphi \mapsto A_{\varphi} := R_{\varphi^{-1}}A R_{\varphi}, \qquad \DRd{q} \to \mathcal{L}(\HRd{q},\HRd{q-r}),
\end{equation*}
is smooth.
\end{thm}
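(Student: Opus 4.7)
The plan is to prove smoothness by exhibiting all Gâteaux derivatives of $\varphi \mapsto A_\varphi$ in closed form and showing that each is a bounded, $\varphi$-continuous element of $\mathcal{L}(\HRd{q},\HRd{q-r})$. The guiding observation is that although $\varphi \mapsto R_\varphi$ itself fails to be smooth on Sobolev scales (each differentiation costs one derivative), the \emph{conjugation} $R_{\varphi^{-1}} A R_\varphi$ enjoys a cancellation: formally, at $\varphi = \id$ in the direction $h \in \HRd{q}$, one finds
\begin{equation*}
  \left.\tfrac{d}{dt}\right|_{t=0} A_{\id+th} u \;=\; [A,\, M_{h}\!\cdot\!\nabla]\,u,
\end{equation*}
where $M_h$ is multiplication by $h$. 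By the symbol calculus for $\mathcal{S}^r$, the commutator of $a(D)$ with multiplication by a $C^1$-function is of order $r-1$, so the loss of one derivative from differentiating a composition is compensated by the gain of one order from the commutator structure. This is exactly why the hypotheses $r\ge 1$ and $q>1+d/2$ enter.

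First, I would make this precise at a general base point $\varphi\in\DRd{q}$. Writing $A_\varphi u = \bigl(A(u\circ\varphi)\bigr)\circ\varphi^{-1}$, I compute the first Gâteaux derivative $D A_\varphi(h)$ and express it as a finite sum of terms of the shape $\bigl(M_{\chi(\varphi)} \circ a(D) \circ M_{\psi(\varphi,h)}\bigr)$ sandwiched between $R_\varphi$ and $R_{\varphi^{-1}}$, where $\chi(\varphi),\psi(\varphi,h)$ are built polynomially from $\varphi$, $\varphi^{-1}$, $h$, and their derivatives. Each such term is a bounded map $\HRd{q}\to\HRd{q-r}$: the multipliers act by Lemma~\ref{lem:sobolev_embedding}(iii), the operator $A$ is bounded in the correct orders by the $\mathcal{S}^r$ estimates, and the two compositions $R_\varphi$, $R_{\varphi^{-1}}$ are locally bounded by Lemma~\ref{lem:composition}, using the lower bound on $J_\varphi$ from Remark~\ref{Rem:Jacobianbounded}.

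Next, I would iterate. Each subsequent differentiation produces a new multiplication commutator and brings in one more derivative of $\varphi$ (through $\varphi^{-1}$) composed back with $\varphi$. Inductively, the $n$th Gâteaux derivative $D^n A_\varphi(h_1,\dots,h_n)$ can be written as a finite sum of pseudodifferential operators whose symbols are built from the partial derivatives of $a(\xi)$ (each belonging to a smaller class $\mathcal{S}^{r-k}$) multiplied by tensor expressions in $\varphi$, $\varphi^{-1}$, $h_1,\dots,h_n$ and their derivatives up to finite order. The explicit enumeration of these symbols is what Appendix~C carries out. Once this form is in hand, the boundedness in $\mathcal{L}(\HRd{q},\HRd{q-r})$ follows, and multilinearity and symmetry in $(h_1,\dots,h_n)$ identify the expression as the genuine $n$th derivative. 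Continuity in $\varphi$ follows from the continuity of $\mu^q$ in Lemma~\ref{lem:composition} combined with the uniform bounds on bounded subsets of $\DRd{q}$.

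The main obstacle is organisational rather than conceptual: at high order, the symbols arising from repeated differentiation of $R_{\varphi^{-1}} A R_\varphi$ are not themselves Fourier multipliers with constant coefficients. One must check that at every order the symbols still obey estimates of $\mathcal{S}^r$-type, that commutators with multiplication reduce the order by one as expected, and that the resulting combinatorial sum over multi-indices can be controlled uniformly on bounded subsets of $\DRd{q}$. Deferring these bookkeeping computations to Appendix~C, the smoothness of $\varphi\mapsto A_\varphi$ then follows from the standard criterion that an $n$-times Gâteaux-differentiable map between Banach spaces with continuous derivative of every order is $C^\infty$.
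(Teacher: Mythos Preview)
Your high-level strategy matches the paper's: exhibit all higher derivatives of $\varphi\mapsto A_\varphi$ as bounded multilinear maps and invoke the standard Gâteaux-to-Fr\'echet criterion. The essential difference is organisational, and it hides a real difficulty you gloss over.

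The paper does \emph{not} compute $D^{n}A_\varphi$ at a general base point $\varphi$. Instead it uses the identity (from~\cite[Lemma~3.2]{EK2014}) that
\[
  \partial^{n}_{\varphi}A_{\varphi}(v,\delta\varphi_{1},\dotsc,\delta\varphi_{n})
  = R_{\varphi}\,A_{n}\,R_{\varphi}^{-1}(v,\delta\varphi_{1},\dotsc,\delta\varphi_{n}),
\]
where $A_{n}:=\partial^{n}_{\id}A_{\varphi}$ is given by the explicit recurrence in Lemma~\ref{lem:A_n}. This collapses the whole problem to two ingredients: (i) prove that each $A_{n}$ extends to a bounded element of $\mathcal{L}^{n+1}(\HRd{q},\HRd{q-r})$, and (ii) use only the \emph{local boundedness} of $R_{\varphi}$ (Lemma~\ref{lem:composition}) to transport. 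The criterion~\cite[Theorem~3.4]{EK2014} then states that smoothness of $\varphi\mapsto A_\varphi$ is \emph{equivalent} to the boundedness of all $A_{n}$. Your proposal to write $D^{n}A_{\varphi}$ at general $\varphi$ as a finite sum of ``pseudodifferential operators whose symbols are built from $\partial^{\alpha}a$ multiplied by tensor expressions in $\varphi,\varphi^{-1},\dotsc$'' is not how these objects look: at $\varphi\neq\id$ they are not pseudodifferential in any usable sense, and your formulation forces you to track $\varphi^{-1}$ explicitly, which is \emph{not} a smooth function of $\varphi$. The reduction to the identity is what makes the Fourier-analytic argument possible.

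Point (i) is the technical heart, and here the paper is much more specific than ``bookkeeping''. Appendix~C shows (Lemma~\ref{lem:multi-Fourier-multipliers}) that $A_{n}$ is a \emph{multi-Fourier multiplier}: its Fourier transform is an integral over $\xi_{0}+\dotsb+\xi_{n}=\xi$ against an explicit symbol $a_{n}$ defined by a recurrence. The crucial step is the estimate of Lemma~\ref{lem:nth-derivative-estimate},
\[
  \norm{a_{n}(\xi_{0},\dotsc,\xi_{n})}
  \lesssim \Bigl(\prod_{k=0}^{n}\lambda_{1}(\xi_{k})\Bigr)
           \sum_{J\subset\{1,\dotsc,n\}}\lambda_{r-1}\Bigl(\xi_{0}+\sum_{j\in J}\xi_{j}\Bigr),
\]
which is what encodes your intuition that ``each commutator gains one order''. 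This estimate is not an obvious consequence of commutator calculus for $n\ge 2$; the paper obtains it by rewriting $a_{n}$ as a linear combination of alternating sums $s_{n}^{p_{1},\dotsc,p_{r}}$ and applying the mean value theorem iteratively. With this bound in hand, Lemma~\ref{lem:A_n} follows from the multiplication Lemma~\ref{lem:pointwise_multiplication}, using $q>1+d/2$ and $r\ge1$ exactly as you anticipated.
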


The main ingredient of the proof is the following lemma that shows the boundedness of the $n$-th Fr\'{e}chet differentials of the operators $A_{{\varphi}}$.

\begin{lem}\label{lem:A_n}
Let $A=a(D)$ be a Fourier multiplier of class $\mathcal{S}^{r}$, $r \ge 1$ and let
\begin{equation*}
  A_{n}:= \partial^{n}_{\id}A_{\varphi} \in \mathcal{L}^{n+1}(\HRd{\infty},\HRd{\infty})
\end{equation*}
be the $(n+1)$-linear operator defined inductively by $A_{0} = A$ and
\begin{multline*}
    A_{n+1}(u_{0},u_{1}, \dotsc , u_{n+1}) = \nabla_{u_{n+1}} \left(A_{n}(u_{0}, u_{1}, \dotsc , u_{n}) \right)\\
    - \sum_{k=0}^{n} A_{n}(u_{0}, u_{1}, \dotsc ,\nabla_{u_{n+1}} u_{k}, \dotsc , u_{n}),
\end{multline*}
where $\nabla$ is the canonical derivative on $\Rd$. Then, each $A_n$ extends to a bounded multilinear operator
\begin{equation*}
  A_{n} \in \mathcal{L}^{n+1}(\HRd{q},\HRd{q-r}).
\end{equation*}
\end{lem}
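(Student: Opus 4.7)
The strategy is induction on $n$. For $n = 0$ the operator $A_0 = a(D)$ is a Fourier multiplier of class $\mathcal{S}^r$, which, by the Remark following the definition of $\mathcal{S}^r$, extends to a bounded map $\HRd{q} \to \HRd{q-r}$ (using $q \ge r$). This is exactly the claim for $n=0$.

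For the inductive step the naive term-by-term estimate is insufficient. Expanding $\nabla_{u_{n+1}} v = \sum_i (u_{n+1})_i \partial_i v$ and bounding each summand in the recursion defining $A_{n+1}$ separately would land each piece in $\HRd{q-r-1}$ rather than $\HRd{q-r}$, since $\partial_i u_0$ loses a derivative. The resolution lies in a commutator cancellation: because $A = a(D)$ is translation-invariant it commutes with $\partial_i$, and at the first level a direct computation gives
\begin{equation*}
A_1(u_0, u_1) = \sum_i \bigl[M_{(u_1)_i}, A\bigr]\,\partial_i u_0,
\end{equation*}
where the commutator of multiplication by an $\HRd{q}$-function with $A \in \mathcal{S}^r$ has effective order $r-1$ by a Kato--Ponce type commutator estimate; this restores the derivative lost in the naive bound.

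To propagate this through the induction one strengthens the inductive hypothesis and establishes simultaneously that $A_n(u_0, u_1, \dots, u_n)$ admits a representation as a finite sum of terms of the form $M_c \circ \op{\partial^\alpha a(\xi)}\,u_0$, where $c$ is a polynomial in derivatives of $u_1, \dots, u_n$ of order at most $n$ and $\partial^\alpha a \in \mathcal{S}^{r-|\alpha|}$ (a consequence of the defining inequality for class $\mathcal{S}^r$). The bound on $A_{n+1}$ then follows by: (i) the Fourier multiplier estimate on each constituent symbol $\partial^\alpha a \in \mathcal{S}^{r-|\alpha|}$; and (ii) the pointwise multiplication result of Lemma~\ref{lem:sobolev_embedding} (third item), giving $\HRd{q}\cdot\HRd{p} \subset \HRd{p}$ for $p \le q$; since $q > 1 + d/2$, the coefficients $c$ carry enough regularity to multiply into the relevant Sobolev spaces at each step without further loss.

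The main obstacle is the combinatorial bookkeeping: writing down the precise structural form of $A_n$ and verifying that the required cancellations survive at every stage of the iteration. This is the technical computation of the symbols of the higher derivatives of the conjugation, which the authors defer to Appendix~C.
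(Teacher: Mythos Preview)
Your handling of $n=0$ and $n=1$ is fine, and the formula $A_{1}(u_{0},u_{1})=\sum_{i}[M_{(u_{1})_{i}},A]\,\partial_{i}u_{0}$ together with a Kato--Ponce type commutator bound does yield the required $H^{q}\to H^{q-r}$ estimate for $A_{1}$. The paper itself remarks that this case can be done by known commutator estimates.

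The gap is in your proposed inductive structure for $n\ge 2$. The claim that $A_{n}(u_{0},u_{1},\dotsc,u_{n})$ is a \emph{finite} sum of terms of the form $M_{c}\circ\op{\partial^{\alpha}a(\xi)}\,u_{0}$ is false for general symbols $a\in\mathcal{S}^{r}$. Such a decomposition holds when $a$ is a polynomial (so that $A$ is a genuine differential operator and each commutator drops the order by one in a finite calculus), but for a non-polynomial symbol like $(1+\abs{\xi}^{2})^{r/2}$ the commutator $[M_{f},a(D)]$ is only a pseudodifferential operator of order $r-1$; the Taylor expansion $a(\eta)-a(\xi)=-\sum_{j}\partial_{j}a(\xi)(\xi_{j}-\eta_{j})+\text{remainder}$ never terminates, and the remainder is \emph{not} of the claimed form. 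So the strengthened inductive hypothesis is simply not available, and the induction cannot be closed along these lines.

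The paper's approach is genuinely different: rather than seeking a physical-space operator decomposition, it passes entirely to the Fourier side and writes $\widehat{A_{n}}(\xi)$ as an integral over the hyperplane $\xi_{0}+\dotsb+\xi_{n}=\xi$ against a multilinear symbol $a_{n}(\xi_{0},\dotsc,\xi_{n})$ (Lemma~\ref{lem:multi-Fourier-multipliers}). The key estimate is then a pointwise bound on $a_{n}$ of the form $\norm{a_{n}}\lesssim\prod_{k}\lambda_{1}(\xi_{k})\sum_{J}\lambda_{r-1}(\xi_{0}+\sum_{j\in J}\xi_{j})$ (Lemma~\ref{lem:nth-derivative-estimate}), obtained by an iterated mean-value argument on finite differences of $a$. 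This is the content of Appendix~C, and it does not reduce to the algebraic representation you propose.
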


\begin{rem}
For $n=1$, we have
\begin{equation*}
    A_{1}(u_{0},u_{1}) = [\nabla_{u_{1}},A]u_{0},
\end{equation*}
and for $n=2$, we get
\begin{equation*}
    A_{2}(u_{0},u_{1},u_{2}) = \big( [\nabla_{u_{2}},[\nabla_{u_{1}},A]] - [\nabla_{\nabla_{u_{2}}u_{1}},A] \big) u_{0}.
\end{equation*}
\end{rem}

The proof of Lemma~\ref{lem:A_n} relies on two main observations which proofs can be found in Appendix~\ref{sec:Fourier-multipliers-derivatives}. The \emph{first one} (Lemma~\ref{lem:multi-Fourier-multipliers}) is that the Fourier transform of
\begin{equation*}
  A_{n}(u_{0},u_{1}, \dotsc , u_{n}), \quad \text{where} \quad u_{0},u_{1}, \dotsc , u_{n} \in \HRd{\infty}
\end{equation*}
that we shall denote by $\widehat{A_{n}}$ (to avoid lengthy notation), can be written as
\begin{equation}\label{eq:integral-formula-for-An}
  \widehat{A_{n}}(\xi) = \int_{\xi_{0} + \dotsb + \xi_{n} = \xi} a_{n}(\xi_{0}, \dotsc ,\xi_{n}) \left[\hat{u}_{0}(\xi_{0}), \dotsc ,\hat{u}_{n}(\xi_{n}) \right]\, d\mu
\end{equation}
where $d\mu$ is the Lebesgue measure on the subspace $\xi_{0} + \dotsb + \xi_{n} = \xi$ of $(\RR^{d})^{n+1}$ and
\begin{equation*}
  a_{n} : (\RR^{d})^{n+1} \to \mathcal{L}^{n+1}(\CC^{d},\CC^{d})
\end{equation*}
is the $(n+1)$-linear map defined inductively by $a_{0} = a$ and
\begin{multline*}
  a_{n+1}(\xi_{0}, \dotsc ,\xi_{n+1}) = \\
  (2i\pi) \sum_{k=0}^{n} \Big[ a_{n}(\xi_{0}, \dotsc ,\xi_{k} + \xi_{n+1},\dotsc ,\xi_{n}) -  a_{n}(\xi_{0}, \dotsc ,\xi_{n}) \Big] \otimes \xi_{k}^{\sharp} ,
\end{multline*}
where $\xi^{\sharp}$ is the linear functional defined by $\xi^{\sharp}(X) := \xi \cdot X$. The \emph{second one} (Lemma~\ref{lem:nth-derivative-estimate}) is an estimate on the sequence $a_{n}$, namely
\begin{equation}\label{eq:an-estimate}
  \norm{a_{n}(\xi_{0}, \dotsc ,\xi_{n})} \le C_{n} \left(\prod_{k=0}^{n}\lambda_{1}(\xi_{k})\right)\left( \sum_{J \subset I_{n}} \lambda_{r-1}\big(\xi_{0} + \sum_{j \in J} \xi_{j}\big) \right),
\end{equation}
where $I_{n}:=\set{1, \dotsc ,n}$ and $\lambda_{r}(\xi) := (1 + \abs{\xi}^{2})^{r/2}$.

\begin{rem}
  An estimate for $A_{1}$ can be obtained directly using known estimates on \emph{commutators} (see~\cite{Lan2006} for an excellent exposition on the subject) but estimating $A_{n}$ requires to control a non-trivial sequence of iterated commutators. Hopefully, Lemma~\ref{lem:multi-Fourier-multipliers} and Lemma~\ref{lem:nth-derivative-estimate} allow us to avoid this painful task.
\end{rem}

Finally, the proof of Lemma~\ref{lem:A_n} also requires the following estimate, which can be found in~\cite[Lemma 2.3]{IKT2013}.

\begin{lem}\label{lem:pointwise_multiplication}
Let $q > d/2$ and $0 \le \rho \le q$. Then there exists $K >0$ such that for any $f\in H^{q}(\RR^{d},\RR)$, $g\in H^{\rho}(\RR^{d},\RR)$, the product $fg$ is in $H^{\rho}(\RR^{d},\RR)$ and
\begin{equation*}
  \norm{fg}_{H^{\rho}} \le K \norm{f}_{H^{q}} \norm{g}_{H^{\rho}}.
\end{equation*}
\end{lem}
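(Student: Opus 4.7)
The plan is to prove the estimate directly by Fourier-side analysis. Writing $\langle\xi\rangle := (1+|\xi|^{2})^{1/2}$ and using Plancherel together with the convolution formula $\widehat{fg} = \hat{f} * \hat{g}$, the inequality to establish reads
\begin{equation*}
\Bigl\|\, \langle\xi\rangle^{\rho}\, (\hat{f} * \hat{g})\,\Bigr\|_{L^{2}} \;\lesssim\; \|f\|_{H^{q}}\, \|g\|_{H^{\rho}}.
\end{equation*}
The strategy is to split the $\eta$-integration in the convolution according to whether $|\eta| \le |\xi-\eta|$ (region $A$) or $|\eta| > |\xi-\eta|$ (region $B$), redistribute the weight $\langle\xi\rangle^{\rho}$ onto either $\hat{f}$ or $\hat{g}$ via the elementary inequalities holding on each region, and then apply Young's convolution inequality. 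The key preliminary fact, which explains why $q > d/2$ is the natural threshold, is the Cauchy--Schwarz bound $\|\hat{f}\|_{L^{1}} \le \|\langle\cdot\rangle^{-q}\|_{L^{2}}\,\|\langle\cdot\rangle^{q}\hat{f}\|_{L^{2}} \lesssim \|f\|_{H^{q}}$, with $\|\langle\cdot\rangle^{-q}\|_{L^{2}} < \infty$ exactly when $2q > d$.

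Region $B$ is straightforward: the inequality $\langle\xi\rangle \le 2\langle\eta\rangle$ valid on $B$ allows us to bound the contribution pointwise by $2^{\rho}\bigl(|\hat{f}| * (\langle\cdot\rangle^{\rho}|\hat{g}|)\bigr)(\xi)$, and Young's $L^{1}*L^{2}\to L^{2}$ inequality combined with the preliminary bound on $\|\hat{f}\|_{L^{1}}$ then yields a bound of the form $\|f\|_{H^{q}}\|g\|_{H^{\rho}}$. Region $A$ is the delicate step. On $A$ one has $\langle\xi\rangle \le 2\langle\xi-\eta\rangle$ \emph{and} $\langle\eta\rangle \le \langle\xi-\eta\rangle$. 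The crucial algebraic trick is to combine these with the hypothesis $\rho - q \le 0$ to write
\begin{equation*}
\langle\xi-\eta\rangle^{\rho} \;=\; \langle\xi-\eta\rangle^{q}\,\langle\xi-\eta\rangle^{\rho-q} \;\le\; \langle\xi-\eta\rangle^{q}\,\langle\eta\rangle^{\rho-q}.
\end{equation*}
This redistribution puts the weight $\langle\cdot\rangle^{q}$ on the $\hat{f}$-side (where $f$ has $H^{q}$ regularity) and leaves the factor $\langle\eta\rangle^{-q}\langle\eta\rangle^{\rho}$ on the $\hat{g}$-side. Applying Young's $L^{2}*L^{1}\to L^{2}$ inequality with $\|\langle\cdot\rangle^{q}\hat{f}\|_{L^{2}} = \|f\|_{H^{q}}$ and with $\|\langle\cdot\rangle^{-q}\langle\cdot\rangle^{\rho}\hat{g}\|_{L^{1}} \lesssim \|g\|_{H^{\rho}}$ (again by Cauchy--Schwarz, now using $\langle\cdot\rangle^{-q}\in L^{2}$) completes the estimate.

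The main obstacle is region $A$. The naive bound $\langle\xi\rangle^{\rho} \lesssim \langle\xi-\eta\rangle^{\rho}$ puts the weight on $\hat{f}$ and forces a Young estimate requiring $\hat{g}\in L^{1}$, which is \emph{not} controlled by $\|g\|_{H^{\rho}}$ when $\rho \le d/2$. Dually, sending the weight to $\hat{g}$ requires $\|\langle\cdot\rangle^{\rho}\hat{f}\|_{L^{1}}$, which via Cauchy--Schwarz would demand $q - \rho > d/2$ --- also false in general. The algebraic redistribution displayed above, which uses both the region's defining inequality $\langle\eta\rangle \le \langle\xi-\eta\rangle$ and the structural hypothesis $\rho \le q$, is precisely what circumvents both obstructions simultaneously and avoids resorting to paraproducts or an interpolation argument.
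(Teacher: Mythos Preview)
Your proof is correct. The splitting into regions $A$ and $B$, the redistribution of the weight $\langle\xi-\eta\rangle^{\rho}=\langle\xi-\eta\rangle^{q}\langle\xi-\eta\rangle^{\rho-q}\le\langle\xi-\eta\rangle^{q}\langle\eta\rangle^{\rho-q}$ on region $A$, and the two applications of Young's inequality together with the Cauchy--Schwarz bound $\|\langle\cdot\rangle^{-q}\|_{L^{2}}<\infty$ (valid precisely when $q>d/2$) all go through as you describe.

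As for comparison: the paper does not give its own proof of this lemma. It is stated with a reference to \cite[Lemma~2.3]{IKT2013} and used as a black box. What you have written is essentially the standard Fourier-side argument one finds in such sources (and is, up to cosmetic variations in how the frequency regions are cut, the proof in the cited reference). So your proposal supplies a self-contained argument where the paper simply imports the result; there is no genuine methodological difference to discuss.
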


\begin{proof}[Proof of Lemma~\ref{lem:A_n}]
Let $u_{0},u_{1}, \dotsc , u_{n} \in \HRd{\infty}$. We have
\begin{equation*}
  \norm{A_{n}(u_{0},u_{1}, \dotsc , u_{n})}^{2}_{H^{q-r}} = \int (\lambda_{q-r}(\xi))^{2} \abs{\widehat{A_{n}}(\xi)}^{2}\,d\xi.
\end{equation*}
But, due to~\eqref{eq:integral-formula-for-An} and~\eqref{eq:an-estimate}, we get
\begin{multline*}
  \abs{\widehat{A_{n}}(\xi)} \lesssim \\
  \sum_{J \subset I_{n}} \int_{\xi_{0} + \dotsb + \xi_{n} = \xi} \lambda_{r-1}\left(\xi_{0} + \sum_{j \in J} \xi_{j}\right) \abs{\widehat{\Lambda^{1}u_{0}}(\xi_{0})} \dotsb \abs{\widehat{\Lambda^{1}u_{n}}(\xi_{n})}\, d\mu,
\end{multline*}
because $\lambda_{r}(\xi)\abs{\hat{u}(\xi)} = \abs{\widehat{\Lambda^{r}u}(\xi)}$, where $\Lambda^{r} := \op{\lambda_{r}(\xi)}$. Now observe that given $n+1$ \emph{scalar-valued} functions $f_{0}, f_{1}, \dotsc ,f_{n} \in H^{\infty}(\RR^{d},\CC)$, we have
\begin{multline*}
  \mathfrak{F}\left(\Lambda^{r}(f_{0} f_{1} \dotsc f_{p}) f_{p+1} \dotsc f_{n}\right)(\xi) = \\
  \int_{\xi_{0} + \dotsb + \xi_{n} = \xi} \lambda_{r}\left(\xi_{0} + \xi_{1} + \dotsb \xi_{p}\right) \widehat{f_{0}}(\xi_{0}) \widehat{f_{1}}(\xi_{1}) \dotsb \widehat{f_{n}}(\xi_{n}) \, d\mu.
\end{multline*}
Thus, if we choose
\begin{equation*}
  f_{k} := \mathfrak{F}^{-1}\left(\abs{\widehat{\Lambda^{1}u_{k}}}\right), \qquad k=0,1, \dotsc ,n,
\end{equation*}
we get
\begin{equation*}
  \abs{\widehat{A_{n}}(\xi)} \lesssim \sum_{J \subset I_{n}} \abs{\mathfrak{F}\left(\Lambda^{r-1}\big(f_{0} \prod_{j\in J}f_{j}\big) \prod_{k\in J^{c}}f_{k}\right)(\xi)},
\end{equation*}
and therefore
\begin{equation*}
  \norm{A_{n}(u_{0},u_{1}, \dotsc , u_{n})}^{2}_{H^{q-r}} \lesssim \sum_{J \subset I_{n}} \norm{\Lambda^{r-1}\big(f_{0} \prod_{j\in J}f_{j}\big) \prod_{k\in J^{c}}f_{k}}^{2}_{H^{q-r}}.
\end{equation*}
Finally, by virtue of lemma~\ref{lem:pointwise_multiplication}, and because we assume $q > 1+d/2$ and $r \ge 1$, we have
\begin{align*}
  \norm{\Lambda^{r-1}(f_{0} \prod_{j\in J}f_{j}) \prod_{k\in J^{c}}f_{k}}^{2}_{H^{q-r}}
  & \lesssim \norm{\Lambda^{r-1}\big(f_{0} \prod_{j\in J}f_{j}\big)}^{2}_{H^{q-r}} \norm{\prod_{k\in J^{c}}f_{k}}^{2}_{H^{q-1}} \\
  & \lesssim \norm{f_{0} \prod_{j\in J}f_{j}}^{2}_{H^{q-1}} \norm{\prod_{k\in J^{c}}f_{k}}^{2}_{H^{q-1}} \\
  & \lesssim \prod_{k=0}^{n} \norm{f_{k}}^{2}_{H^{q-1}} = \prod_{k=0}^{n} \norm{u_{k}}^{2}_{H^{q}},
\end{align*}
which completes the proof.
\end{proof}

\begin{proof}[Proof of Theorem~\ref{thm:smoothness-of-conjugates}]
Using Lemma~\ref{lem:A_n} and~\ref{lem:composition}, the proof of Theorem~\ref{thm:smoothness-of-conjugates} follows similarly as in~\cite{EK2014}. The basic idea is to show that the mapping
\begin{equation*}
 \varphi \mapsto A_{\varphi} := R_{\varphi^{-1}}A R_{\varphi}, \qquad \DRd{q} \to \mathcal{L}(\HRd{q},\HRd{q-r}),
\end{equation*}
is smooth, if and only if, each $A_n$ extends to a bounded $(n+1)$-linear operator in $\mathcal{L}^{n+1}(\HRd{q},\HRd{q-r})$.
Therefore we need the local boundedness of the composition operator $R_{\varphi}$, c.f.~\cite[Theorem 3.4.]{EK2014}.
The proof of this statement does not depend on the dimension of the base manifold and thus we will not repeat the argumentation.
Now the statement of the Theorem follows using Lemma~\ref{lem:A_n}.
\end{proof}

\section{Smoothness of the extended metric on Hilbert manifolds}
\label{sec:smoothness}

For general facts on Riemannian geometry on a Banach manifold we refer to~\cite{Lan1999}. Let us recall that a Riemannian metric $G$ on $\DRd{q}$, where $q > 1 + d/2$,
is a smooth, symmetric, positive definite, covariant $2$-tensor field on $\DRd{q}$. In other words, we have for each $\varphi \in \DRd{q}$ a symmetric, positive definite,
bounded, bilinear form $G_{\varphi}$ on $T_{\varphi}\DRd{q}$ and, in any local chart $U$, the mapping
\begin{equation*}
  \varphi \to G_{\varphi}, \qquad U \to \mathcal{L}_{\mathrm{Sym}}^{2}(\HRd{q},\RR)
\end{equation*}
is smooth. Given any $\varphi\in \DRd{q}$, we can therefore consider the bounded, linear operator
\begin{equation*}
  \tilde{G}_{\varphi} : T_{\varphi}\DRd{q} \to T_{\varphi}^{*}\DRd{q},
\end{equation*}
called the \emph{flat map} and defined by $\tilde{G}_{\varphi}(v) := G_{\varphi}(v, \cdot)$. The metric is \emph{strong} if $\tilde{G}_{\varphi}$ is a topological linear isomorphism for every $\varphi \in \DRd{q}$, whereas it is \emph{weak} if $\tilde{G}_{\varphi}$ is only injective for some $\varphi \in \DRd{q}$.

Suppose that $A: \HRd{\infty} \to \HRd{\infty}$ is a $L^{2}$ symmetric, positive definite, topological isomorphism, that extends to a bounded, \emph{injective} operator in the space
\begin{equation*}
  \mathcal{L}(\HRd{q},\HRd{-q}),
\end{equation*}
where $\HRd{-q}$ is the Banach dual of $\HRd{q}$. Then $A$ induces an inner product on the Sobolev space $\HRd{q}$, given by
\begin{equation*}
  \langle u_{1},u_{2}\rangle := \left( Au_{1}, u_{2}\right),
\end{equation*}
where $\left( \cdot , \cdot \right)$ denotes the dual pairing between $\HRd{q}$ and its topological dual $\HRd{-q}$. To conclude that the family of inner products
\begin{equation*}
  G_{\ph}(v_{1},v_{2})= \langle v_{1}\circ\ph^{-1}, v_{2}\circ\ph^{-1}\rangle,
\end{equation*}
where $\ph\in \DRd{q}$ and $v_{1},v_{2}\in T_{\ph}\DRd{q}$, defines a (smooth) Riemannian metric on $\DRd{q}$ we need to show that the corresponding flat map
\begin{equation*}
  \ph \mapsto \tilde G_{\ph}, \qquad \DRd{q} \to \mathcal{L}(\HRd{q},\HRd{-q})
\end{equation*}
is a smooth mapping. We will distinguish two cases:
\begin{enumerate}
 \item \emph{Weak metrics on $\DRd{q}$}; i.e., the metric $G$ is only injective, seen as a mapping from the tangent bundle $T\DRd{q}$ to the co-tangent bundle $T^*\DRd{q}$
 \item \emph{Strong metrics on $\DRd{q}$}; i.e., the metric $G$ induces an isomorphism between the tangent bundle and the co-tangent bundle.
\end{enumerate}

\subsection{Weak metrics on $\DRd{q}$}

We consider first the case of an inner product on $\HRd{\infty}$ which is given by
\begin{equation*}
  \langle u_{1},u_{2}\rangle = \int Au_{1} \cdot u_{2} \, dx ,
\end{equation*}
where $A$ is a Fourier multiplier of class $\mathcal{S}^{r}$ and $r \ge 1$. Let $q > 1 +d/2$ and $q-r \ge 0$. We suppose further that the inertia operator $A$ extends
to a bounded \emph{isomorphism} between $\HRd{q}$ and $\HRd{q-r}$. Since $\HRd{q-r}$ embeds continuously into $\HRd{-q}$,
it is sufficient to show that the mapping
\begin{equation}\label{equation:inertiatimesJacobian}
\ph \mapsto \tilde G_{\ph}= J_{\ph} A_\ph, \qquad \DRd{q} \rightarrow \mathcal L(\HRd{q},\HRd{q-r})
\end{equation}
is a smooth mapping to conclude that $A$ induces a smooth (weak) Riemannian metric on $\DRd{q}$.
Here $J_{\ph}$ denotes the Jacobian determinant of $\ph$.

\begin{lem}
Let $q > 1 + d/2$ and $q-r \ge 0$. Then the mapping
\begin{equation*}
  \varphi \mapsto A_{\varphi}, \quad \DRd{q} \to \mathcal{L}(\HRd{q},\HRd{q-r})
\end{equation*}
is smooth if and only if the mapping
\begin{equation*}
  \varphi \mapsto \tilde G_{\varphi}, \quad \DRd{q} \to \mathcal{L}(\HRd{q},\HRd{q-r})
\end{equation*}
is smooth and $A$ induces a smooth Riemannian metric on $\DRd{q}$.
\end{lem}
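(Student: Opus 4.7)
The key relation is $\tilde G_{\varphi} = J_{\varphi} A_{\varphi}$, where $J_{\varphi}$ acts as pointwise multiplication on $\Rd$-valued Sobolev functions. The plan is therefore to establish that both multiplication by $J_{\varphi}$ and by $1/J_{\varphi}$ depend smoothly on $\varphi$ as bounded linear operators on $\HRd{q-r}$, and then to pass between $A_{\varphi}$ and $\tilde G_{\varphi}$ by these smooth multiplication operators.

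First I would show that $\varphi \mapsto J_{\varphi} - 1$ is a smooth map $\DRd{q} \to H^{q-1}(\Rd,\RR)$. Writing $\varphi = \id + f$ with $f \in \HRd{q}$, the difference $J_{\varphi} - 1 = \det(I + df) - 1$ is a polynomial (without constant term) in the entries of $df \in H^{q-1}$. Since $q-1 > d/2$, Lemma~\ref{lem:pointwise_multiplication} makes $H^{q-1}(\Rd,\RR)$ into a topological algebra under pointwise multiplication, so this polynomial expression defines a smooth map. Next, because $\varphi \in \DRd{q}$ forces $J_{\varphi}$ to be uniformly bounded away from zero on a neighbourhood of any fixed $\varphi_0$ (Remark~\ref{Rem:Jacobianbounded}), the scalar map $z \mapsto 1/(1+z)$ can be applied to the element $J_{\varphi} - 1 \in H^{q-1}$ via its convergent power series in the unital algebra $H^{q-1} \oplus \RR \cdot 1$. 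This shows $\varphi \mapsto 1/J_{\varphi} - 1$ is likewise smooth into $H^{q-1}(\Rd,\RR)$.

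Second, I would use Lemma~\ref{lem:pointwise_multiplication} once more: for any $h \in H^{q-1}(\Rd,\RR)$ and any $u \in \HRd{q-r}$, the pointwise product $h \cdot u$ lies in $\HRd{q-r}$ with $\|hu\|_{H^{q-r}} \lesssim \|h\|_{H^{q-1}}\|u\|_{H^{q-r}}$ (this uses $0 \le q-r \le q-1$, which is the reason for requiring $r \ge 1$). Hence multiplication $h \mapsto M_h$ is a bounded linear map $H^{q-1}(\Rd,\RR) \to \mathcal{L}(\HRd{q-r},\HRd{q-r})$, and composing with the smooth maps from the previous step yields that
\begin{equation*}
  \varphi \mapsto M_{J_{\varphi}} \quad \text{and} \quad \varphi \mapsto M_{1/J_{\varphi}}
\end{equation*}
are smooth as $\DRd{q} \to \mathcal{L}(\HRd{q-r},\HRd{q-r})$. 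Since composition of operators is a bounded bilinear operation, the identities
\begin{equation*}
  \tilde G_{\varphi} = M_{J_{\varphi}} \circ A_{\varphi}, \qquad A_{\varphi} = M_{1/J_{\varphi}} \circ \tilde G_{\varphi}
\end{equation*}
in $\mathcal{L}(\HRd{q},\HRd{q-r})$ immediately give the claimed equivalence of smoothness.

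Finally, once $\tilde G_{\varphi}$ is smooth as an operator-valued map, the bilinear form $G_{\varphi}(v_1,v_2) = \int_{\Rd}(\tilde G_{\varphi} v_1) \cdot v_2\, dx$ is obtained by composing $\tilde G_{\varphi}$ with the continuous $L^2$-pairing extended to $\HRd{q-r} \times \HRd{q} \to \RR$ (using $q-r \ge -q$, automatic from $q-r \ge 0$), so $G$ is a smooth symmetric covariant $2$-tensor field on $\DRd{q}$; positive-definiteness on each tangent space is inherited from positivity of $\langle\cdot,\cdot\rangle$ on $\HRd{\infty}$ by density and continuity. I expect the only non-routine point to be the smooth invertibility step $\varphi \mapsto 1/J_{\varphi}$ in $H^{q-1}$, which I would handle by working in the unitalization of the Banach algebra $H^{q-1}(\Rd,\RR)$ and appealing to smoothness of the inversion map at invertible elements.
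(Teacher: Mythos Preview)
Your proposal is correct and follows essentially the same approach as the paper: the paper's proof consists of the single observation that pointwise multiplication $H^{q-1}(\Rd,\RR)\times\HRd{q-r}\to\HRd{q-r}$ is continuous for $q>1+d/2$ and $0\le q-r$, which is exactly the fact you invoke via Lemma~\ref{lem:pointwise_multiplication}. You have simply expanded the details the paper leaves implicit, most notably the smoothness of $\varphi\mapsto 1/J_{\varphi}$ (needed for the reverse implication), which you handle cleanly via the unitalization of the Banach algebra $H^{q-1}$; and you correctly isolate the role of the standing hypothesis $r\ge 1$ in guaranteeing $q-r\le q-1$ for the multiplication estimate.
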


\begin{proof}
This lemma is an immediate consequence of the fact that pointwise multiplication
\begin{equation*}
  (f,u) \mapsto fu, \qquad H^{q-1}(\Rd, \RR) \times \HRd{q-r} \to \HRd{q-r}
\end{equation*}
is continuous for $q > 1 + d/2$ and $q-r \ge 0$.
\end{proof}

\begin{rem}
If the mapping
\begin{equation*}
  \varphi \mapsto \tilde A_{\varphi}, \quad \DRd{q} \to \mathcal{L}(\HRd{q},\HRd{p})
\end{equation*}
is a smooth mapping for some $p\leq q$, then it is also a smooth mapping
\begin{equation*}
\DRd{q} \to \mathcal{L}(\HRd{q},\HRd{\tilde p})
\end{equation*}
for each $\tilde p\leq p$.
\end{rem}
\begin{rem}
If $G$ is a weak Riemannian metric on $\DRd{q}$ it is also a weak Riemannian metric on $\DRd{\tilde q}$, for $\tilde q \geq q$. The converse of this statement does not hold in general.
\end{rem}

As a consequence of Lemma~\ref{thm:smoothness-of-conjugates} we obtain the following result concerning the smoothness of the metric $G$.

\begin{thm}[Smoothness of the metric]\label{thm:smoothness-weak-metric}
Let $A$ be a Fourier multiplier of class $\mathcal{S}^{r}$ and $r \ge 1$. Let $q>1+\frac{d}{2}$ and $q-r \ge 0$. Then, the right-invariant, weak Riemannian metric defined on $\DiffRd$ extends to a smooth, \emph{weak} Riemannian metric on the Banach manifold $\DRd{q}$.
\end{thm}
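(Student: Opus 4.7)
The plan is to deduce the theorem directly from Theorem~\ref{thm:smoothness-of-conjugates} together with the unnumbered lemma that immediately precedes the theorem (the one stating that smoothness of $\varphi\mapsto A_\varphi$ is equivalent to smoothness of $\varphi\mapsto\tilde G_\varphi=J_\varphi A_\varphi$). First I would verify the hypotheses of Theorem~\ref{thm:smoothness-of-conjugates}: the condition $q-r\geq 0$ is exactly $q\geq r$, while $r\geq 1$ and $q>1+d/2$ are assumed. Hence the conjugation map $\varphi\mapsto A_\varphi:=R_{\varphi^{-1}}AR_\varphi$ is smooth from $\DRd{q}$ into $\mathcal{L}(\HRd{q},\HRd{q-r})$.

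Next, I would apply the preceding lemma to obtain that the flat map $\varphi\mapsto \tilde G_\varphi=J_\varphi A_\varphi$ is smooth into $\mathcal{L}(\HRd{q},\HRd{q-r})$ and that $G$ defines a smooth covariant $2$-tensor field on $\DRd{q}$. Since $q\geq r\geq 0$ gives $q-r\geq -q$, the embedding $\HRd{q-r}\hookrightarrow\HRd{-q}$ is continuous, so $\tilde G_\varphi$ is also smooth as a mapping $\DRd{q}\to\mathcal{L}(\HRd{q},\HRd{-q})$, the correct target for a Riemannian flat map on the Hilbert manifold $\DRd{q}$.

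To finish, I would check that this smooth $2$-tensor is a weak Riemannian metric, i.e.\ that each $\tilde G_\varphi$ is fibrewise symmetric, positive definite, and injective. Symmetry and positivity at the identity come from the hypothesis that the inertia operator $A$ is $L^{2}$-symmetric and positive definite on $\HRd{\infty}$; using the continuity of $A:\HRd{q}\to\HRd{q-r}$, the continuous pairing $\HRd{q-r}\times\HRd{q}\to\mathbb{R}$, and the density of $\HRd{\infty}$ in $\HRd{q}$ (Lemma~\ref{lem:sobolev_embedding}), these properties extend to all of $\HRd{q}$. Right-invariance then transfers them to every tangent space: indeed Lemma~\ref{lem:composition} applied to $\varphi$ and $\varphi^{-1}\in\DRd{q}$ shows that $R_{\varphi^{-1}}$ is a topological automorphism of $\HRd{q}$, and Remark~\ref{Rem:Jacobianbounded} guarantees $J_\varphi>0$ pointwise, so multiplication by $J_\varphi$ preserves injectivity.

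The main obstacle has already been absorbed into Theorem~\ref{thm:smoothness-of-conjugates}, whose proof rests on the nontrivial iterated-commutator estimates of Lemma~\ref{lem:A_n}. Granted that machinery, the present theorem is a clean bookkeeping exercise: combine the preceding lemma, the continuous Sobolev embedding $\HRd{q-r}\hookrightarrow\HRd{-q}$, and the right-invariance of~$G$.
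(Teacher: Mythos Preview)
Your proposal is correct and follows essentially the same approach as the paper, which simply states that the theorem is an immediate consequence of Theorem~\ref{thm:smoothness-of-conjugates}. You spell out the intermediate steps (invoking the preceding lemma and verifying the weak-metric axioms) that the paper leaves implicit, but the logical skeleton is identical.
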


\begin{proof}
The proof of this theorem is an immediate consequence of theorem~\ref{thm:smoothness-of-conjugates}.
\end{proof}

\subsection{Strong metrics on $\DRd{s}$}\label{sec:strongmetric}

We now consider the case where the order of $A$ is high enough such that it induces a strong Riemannian metric on the
Sobolev completion $\DRd{s}$. Therefore let $A$ be a Fourier multiplier of class $\mathcal{E}^{2s}$ with $s > 1+\frac{d}{2}$ that
extends to a bounded \emph{isomorphism} between $\HRd{s}$ and its \emph{dual space} $\HRd{-s}$. If the mapping
\begin{equation*}
\ph \mapsto \tilde G_{\ph}= J_{\ph} A_\ph, \qquad \DRd{s} \rightarrow \mathcal L(\HRd{s},\HRd{-s})
\end{equation*}
is a smooth mapping, then $A$ induces a strong Riemannian metric on $\DRd{s}$.

Using Lemma~\ref{lem:square-root}, we can decompose the operator $A$ as
\begin{equation*}
  A := B^{*}B : \HRd{s} \to \HRd{-s}
\end{equation*}
where $B$ is a Fourier multiplier of class $\mathcal{E}^{s}$ and $B^{*}$ is the corresponding transpose of the operator $B$:
\begin{equation*}
  B : \HRd{s} \to  \LRd,\qquad B^{*} : \LRd \to \HRd{-s}.
\end{equation*}
We can therefore rewrite the metric on $\DRd{q}$ as
\begin{align*}
  G_{\ph}( v_{1},v_{2}) & = \int_{\Rd} B (v_{1}\circ \varphi^{-1}) \cdot B (v_{2}\circ \varphi^{-1}) \, dx
  \\
  & = \int_{\Rd} B_\ph  (v_{1}) \cdot B_\ph (v_{2})J_{\ph} \, dx
\end{align*}
for $v_{1}$, $v_{2}$ tangent vectors in $T_{\varphi}\DRd{s}$. Here $B_\ph$ denotes the operator
\begin{equation*}
  B_{\varphi} := R_{\varphi} \circ B \circ R_{\varphi^{-1}}.
\end{equation*}
Using the transpose of the operator $B_\ph$ we obtain
\begin{align*}
  G_{\ph}( v_{1},v_{2}) = \left(B_\ph^* \circ M_{J_{\varphi}}\circ B_\ph  (v_{1}), v_{2}\right)_{\HRd{q}\times \HRd{-q}},
\end{align*}
where $M_{J_{\varphi}}$ is the pointwise multiplication by the Jacobian determinant $J_{\varphi}$ of the diffeomorphism $\varphi$.
Comparing this with \eqref{equation:inertiatimesJacobian}, we obtain
\begin{align*}
  B_\ph^* \circ M_{J_{\varphi}}\circ B_\ph = J_{\ph}A_{\ph} = \tilde G_{\ph}\,.
\end{align*}

The latter formula can now be used to obtain the following result concerning the smoothness of this family of inner products.

\begin{thm}[Smoothness of the strong metric]\label{thm:smoothness-strong-metric}
Let $s > 1 + d/2$ and $B \in \operatorname{Isom}(\HRd{s},\LRd)$. Suppose that the mapping
\begin{equation*}
  \varphi \mapsto B_{\varphi}, \quad \DRd{s} \to \mathcal{L}(\HRd{s},\LRd)
\end{equation*}
is smooth. Then the inertia operator
\begin{equation*}
  A := B^{*}B : \HRd{s} \to \HRd{-s}
\end{equation*}
induces a \emph{smooth and strong} Riemannian metric on $\DRd{s}$.
\end{thm}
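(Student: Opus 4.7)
The plan is to exploit the factorization
\[
\tilde G_\ph = B_\ph^{*} \circ M_{J_\ph} \circ B_\ph
\]
derived immediately before the statement, and to verify the two conclusions of the theorem (smoothness of the metric and the strong property) by analyzing the three factors separately. For each of them I will show that (i) it depends smoothly on $\ph$, and (ii) it is a topological isomorphism between the appropriate Sobolev spaces. Since composition of bounded operators is a continuous bilinear operation between Banach spaces of operators, (i) will yield smoothness of $\ph \mapsto \tilde G_\ph \in \mathcal{L}(\HRd{s},\HRd{-s})$, while (ii) will make $\tilde G_\ph$ a topological isomorphism for every $\ph$, which is precisely the definition of a strong Riemannian metric.

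For step (i), smoothness of $\ph \mapsto B_\ph \in \mathcal{L}(\HRd{s},\LRd)$ is granted by hypothesis. Taking the Banach transpose is a bounded linear (hence smooth) operation from $\mathcal{L}(\HRd{s},\LRd)$ into $\mathcal{L}(\LRd,\HRd{-s})$, so $\ph \mapsto B_\ph^{*}$ is smooth as well. For the middle factor, note that $s-1 > d/2$ ensures, via Lemma~\ref{lem:sobolev_embedding}, that $H^{s-1}(\Rd,\RR)$ is a Banach algebra continuously embedded into $L^{\infty}$; the map $\ph \mapsto d\ph$ is affine continuous $\DRd{s} \to H^{s-1}$, and the determinant being polynomial in the matrix entries, $\ph \mapsto J_\ph$ is smooth into $H^{s-1} \hookrightarrow L^{\infty}$. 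Composing with the bounded linear map $f \mapsto M_f \in \mathcal{L}(\LRd)$ then yields smoothness of $\ph \mapsto M_{J_\ph}$.

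Step (ii) is the point I expect to require the most care, and is the main obstacle. The right translations $R_\ph$ and $R_{\ph^{-1}}$ are bounded isomorphisms of $\HRd{s}$ onto itself by Lemma~\ref{lem:composition} applied with $p = s$; applying the same lemma with $p = 0$, together with the change-of-variables identity
\[
\norm{u \circ \ph}_{\LRd}^{2} = \int_{\Rd} \abs{u(y)}^{2} J_{\ph^{-1}}(y) \, dy
\]
and the two-sided pointwise bounds on $J_\ph$ provided by Remark~\ref{Rem:Jacobianbounded}, shows that the same right translations are also bounded isomorphisms of $\LRd$. Sandwiching the given isomorphism $B : \HRd{s} \to \LRd$, it follows that $B_\ph = R_\ph \circ B \circ R_{\ph^{-1}}$ is an isomorphism $\HRd{s} \to \LRd$, and therefore its transpose $B_\ph^{*}$ is an isomorphism $\LRd \to \HRd{-s}$. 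Finally $M_{J_\ph} : \LRd \to \LRd$ is invertible with inverse $M_{1/J_\ph}$, again by the bounds on $J_\ph$. The composition $\tilde G_\ph = B_\ph^{*} \circ M_{J_\ph} \circ B_\ph$ is thus an isomorphism $\HRd{s} \to \HRd{-s}$ for every $\ph \in \DRd{s}$, which completes the argument.
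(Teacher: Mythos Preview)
Your proof is correct and follows the same approach as the paper's: both exploit the factorization $\tilde G_\ph = B_\ph^{*} \circ M_{J_\ph} \circ B_\ph$, argue smoothness of each factor (using that transposition is bounded linear and composition is bounded bilinear), and observe that each factor is a topological isomorphism so the metric is strong. Your treatment of the isomorphism part is in fact more detailed than the paper's, which simply asserts that $B_\ph^{*} \circ M_{J_\ph} \circ B_\ph$ is a topological isomorphism.

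One small inaccuracy: $J_\ph$ itself does not lie in $H^{s-1}(\Rd,\RR)$, since $J_\ph(x) \to 1$ as $\abs{x} \to \infty$. What is true is that $J_\ph - 1$ is a polynomial without constant term in the entries of $df$ (where $\ph = \id + f$), hence $J_\ph - 1 \in H^{s-1}$ and depends smoothly on $\ph$. This still gives smoothness of $\ph \mapsto J_\ph$ into $L^\infty$ and hence of $\ph \mapsto M_{J_\ph}$ into $\mathcal{L}(\LRd)$, so your conclusion is unaffected.
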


\begin{proof}
Note first that, given $\varphi \in \DRd{s}$, the mapping $B_{\varphi}^{*}\circ M_{J_{\varphi}}\circ B_{\varphi}$ is a topological isomorphism between $\HRd{s}$ and $\HRd{-s}$. The smoothness is proved as follows. Since transposition and composition of bounded operators between Banach spaces are themselves bounded operators it follows that
the transpose
\begin{equation*}
  \varphi \mapsto B^{*}_{\varphi}, \quad \DRd{s} \to \mathcal{L}(\LRd,\HRd{-s})
\end{equation*}
is smooth iff
\begin{equation*}
  \varphi \mapsto B_{\varphi}, \quad \DRd{s} \to \mathcal{L}(\HRd{s},\LRd)
\end{equation*}
is smooth. Using that
\begin{equation*}
  \varphi \mapsto M_{J_{\varphi}} , \quad \DRd{s} \to \mathcal{L}(\LRd,\LRd),
\end{equation*}
is smooth for $s >1 + d/2$, it follows that the composition
\begin{equation*}
  B_{\varphi}^{*}\circ M_{J_{\varphi}}\circ B_{\varphi} , \quad \DRd{s} \to \mathcal{L}(\HRd{s},\HRd{-s}),
\end{equation*}
is smooth and that thus also the metric is smooth. The second statement follows directly.
\end{proof}

\section{Local well-posedness of the geodesic equation}
\label{sec:well-posedness}

In this section we show the local well-posedness of the EPDiff equation, assuming a sufficiently high order $r$ of the metric $G$. It turns out that the required order $r$ does not depend on the dimension $d$. The proof is based on the same method as in the seminal article of Ebin and Marsden~\cite{EM1970}. Therefore we first need to show the smoothness of the extended spray. Then the local well-posedness follows from the Picard-Lindel\"{o}f (or Cauchy-Lipschitz) theorem.

\subsection{Smoothness of the extended spray}

We will now prove smoothness of the spray on $T\DRd{q}$, when the inertia operator $A$ is in the class $\mathcal{E}^{r}$ (see definition \ref{def:class-Er}) where $r \ge 1$ and $q > 1 +
d/2$, with $q-r \ge 0$.

\begin{thm}\label{thm:smoothness_spray}
Let $A$ be a Fourier multiplier in the class $\mathcal{E}^{r}$, where $r \ge 1$ and let $q > 1 +
d/2$, with $q-r \ge 0$. Then the geodesic spray
\begin{equation*}
    (\varphi, v) \mapsto S_{\varphi} (v) = R_{\varphi} \circ S \circ
R_{\varphi^{-1}} (v),
\end{equation*}
where
\begin{equation}\label{eq:spray}
    S(u) = A^{-1} \left\{ [A,\nabla_{u}] u - (\nabla u )^{t} Au - (\dive u) Au \right\}
\end{equation}
extends smoothly to $T\DRd{q} = \DRd{q}\times \HRd{q}$.
\end{thm}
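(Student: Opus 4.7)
The plan is to rewrite the conjugated spray as a sum of three pieces, each of which extends smoothly from $\DRd{q}\times\HRd{q}$ into $\HRd{q-r}$, and then invoke smoothness of operator inversion to handle the outer $A_\varphi^{-1}$. Setting $u:=v\circ\varphi^{-1}$ and using the chain-rule identities $du\circ\varphi=dv\,(d\varphi)^{-1}$ and $(Au)\circ\varphi=A_\varphi v$, together with $R_\varphi A^{-1}=A_\varphi^{-1}R_\varphi$, one rewrites \eqref{eq:spray} as
\[
S_\varphi(v)=-A_\varphi^{-1}\bigl\{T_\varphi(v,v)+(d\varphi)^{-t}(dv)^{t}\,A_\varphi v+\tr\!\bigl(dv\,(d\varphi)^{-1}\bigr)\,A_\varphi v\bigr\},
\]
where the bilinear conjugated-commutator term
\[
T_\varphi(v_1,v_2):=R_\varphi A_{1}\!\bigl(R_{\varphi^{-1}}v_1,R_{\varphi^{-1}}v_2\bigr)
\]
arises from the identity $[A,\nabla_u]u=-A_1(u,u)$, with $A_1$ the bilinear operator of Lemma~\ref{lem:A_n}.

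The two non-commutator terms are straightforward. The map $\varphi\mapsto(d\varphi)^{-1}$ is smooth from $\DRd{q}$ into $H^{q-1}(\Rd,\mathrm{M}_d(\RR))$: writing $d\varphi=I+df$ with $f=\varphi-\id$, matrix inversion is real-analytic near $I$, and $H^{q-1}$ is a Banach algebra by Lemma~\ref{lem:sobolev_embedding} since $q-1>d/2$. Together with Theorem~\ref{thm:smoothness-of-conjugates} for $\varphi\mapsto A_\varphi\in\mathcal{L}(\HRd{q},\HRd{q-r})$ and the multiplication estimate of Lemma~\ref{lem:pointwise_multiplication} (applicable because $r\geq1$ forces $q-r\leq q-1$), this shows that the last two terms are smooth maps $\DRd{q}\times\HRd{q}\to\HRd{q-r}$.

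The commutator term $T_\varphi(v,v)$ is the core difficulty. Its naive expansion $A_\varphi(dv\,(d\varphi)^{-1}v)-d(A_\varphi v)(d\varphi)^{-1}v$ has each summand only in $\HRd{q-r-1}$; the gain of one derivative comes from the symbol cancellation encoded by the boundedness of $A_1$ from $\HRd{q}\times\HRd{q}$ into $\HRd{q-r}$ established in Lemma~\ref{lem:A_n}. To upgrade this pointwise-in-$\varphi$ bound to smoothness of the map $\varphi\mapsto T_\varphi$ into $\mathcal{L}^{2}(\HRd{q},\HRd{q-r})$, I would extend Theorem~\ref{thm:smoothness-of-conjugates} from operators to bilinear operators: the iterated Fr\'{e}chet derivatives of $T_\varphi$ at $\varphi=\id$ are captured by multilinear analogues of the $A_n$ satisfying the same inductive formula as in Lemma~\ref{lem:A_n} (with one extra \emph{slot} corresponding to the second argument of $A_1$), and the corresponding multilinear boundedness estimates on $(\HRd{q})^{n+2}$ follow \emph{mutatis mutandis} from the arguments of Lemma~\ref{lem:A_n} and Appendix~C. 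The Taylor remainder, combined with the local boundedness of right-translation on $\HRd{q-r}$ from Lemma~\ref{lem:composition}, then yields the required smoothness. This bilinear extension of Lemma~\ref{lem:A_n} is what I expect to be the main technical obstacle.

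Finally, because $A\in\mathcal{E}^{r}$ is a topological isomorphism $\HRd{q}\to\HRd{q-r}$ and $R_\varphi$ is a homeomorphism of each Sobolev space $\HRd{p}$ for $0\leq p\leq q$ by Lemma~\ref{lem:composition}, the operator $A_\varphi$ is an isomorphism $\HRd{q}\to\HRd{q-r}$ for every $\varphi\in\DRd{q}$. Smoothness of Banach-space inversion applied to the smooth family $\varphi\mapsto A_\varphi$ from Theorem~\ref{thm:smoothness-of-conjugates} then gives that $\varphi\mapsto A_\varphi^{-1}$ is smooth into $\mathcal{L}(\HRd{q-r},\HRd{q})$, and composing with the three smooth pieces above yields the smoothness of $S_\varphi(v)$ on $T\DRd{q}=\DRd{q}\times\HRd{q}$.
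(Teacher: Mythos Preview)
Your decomposition and treatment of the two non-commutator terms and of $A_\varphi^{-1}$ match the paper's argument essentially verbatim. The difference lies in the commutator term, where you overlook a shortcut and thereby create an artificial ``main technical obstacle.''

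You observe that $[A,\nabla_u]u=-A_1(u,u)$ and then propose to establish, from scratch, a bilinear analogue of Theorem~\ref{thm:smoothness-of-conjugates} for the map $\varphi\mapsto T_\varphi=R_\varphi A_1 R_{\varphi^{-1}}^{\otimes 2}$. This would work, but it is unnecessary: the operator $T_\varphi$ is precisely the first Fr\'{e}chet derivative $\partial_\varphi A_\varphi$ of the map $\varphi\mapsto A_\varphi$ (this is how $A_1$ is defined in Lemma~\ref{lem:A_n}, and the relation $\partial_\varphi^n A_\varphi = R_\varphi A_n R_{\varphi^{-1}}^{\otimes(n+1)}$ underlies the proof of Theorem~\ref{thm:smoothness-of-conjugates}). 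Since Theorem~\ref{thm:smoothness-of-conjugates} already tells you that $\varphi\mapsto A_\varphi$ is smooth into $\mathcal{L}(\HRd{q},\HRd{q-r})$, its derivative $\varphi\mapsto\partial_\varphi A_\varphi=T_\varphi$ is automatically smooth into $\mathcal{L}^2(\HRd{q},\HRd{q-r})$, and hence $(\varphi,v)\mapsto T_\varphi(v,v)$ is smooth into $\HRd{q-r}$. No bilinear extension of Lemma~\ref{lem:A_n} is needed; the required multilinear bounds are already encoded in the smoothness of $A_\varphi$. The paper exploits exactly this observation in one line.
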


\begin{rem}
Note that this statement is highly non-trivial, since the metric is only a \emph{weak metric}. In Lemma \ref{lem:local-well_strong}, where we treat the strong metric case, we will obtain the smoothness of the spray for free. Furthermore the assumption on the order of the operator is sharp, i.e., for operators $A$ of order $r < 1$ the geodesic spray can never extend smoothly to some Sobolev completion $T\DRd{q}$. This follows immediately from equation \eqref{eq:spray}: the term $\dive u$, which is of order one, is always present in this equation. Thus we need the operator $A^{-1}$ to be a smoothing operator of at least order one.
\end{rem}

\begin{proof}[Proof of Theorem~\ref{thm:smoothness_spray}]
Set
\begin{equation*}
  Q^{1}(u) := [A,\nabla_{u}] u, \quad Q^{2}(u) := (\nabla u )^{t} Au,
\quad Q^{3}(u) := (\dive u) Au .
\end{equation*}
Then
\begin{equation*}
    S_{\varphi}(v) = A_{\varphi}^{-1} \left\{ Q^{1}_{\varphi}(v) -
Q^{2}_{\varphi}(v) - Q^{3}_{\varphi}(v) \right\},
\end{equation*}
and the proof reduces to establish, using the chain rule, that the mappings
\begin{equation*}
  (\varphi,v) \mapsto Q^{i}_{\varphi}(v), \quad \text{and} \quad
(\varphi,w) \mapsto A_{\varphi}^{-1} (w)
\end{equation*}
are smooth, for $i=1,2,3$.

(a) By virtue of Lemma \ref{lem:A_n}, we have
\begin{equation*}
    \partial_{\varphi}A_{\varphi} (v,v) = A_{1,\varphi}(v,v) = -
Q^{1}_{\varphi}(v),
\end{equation*}
and therefore
\begin{equation*}
    (\varphi,v) \mapsto  Q^{1}_{\varphi}(v), \quad \DRd{q}\times\HRd{q} \to \HRd{q-r}
\end{equation*}
is smooth.

(b) We have $Q^{2}_{\varphi}(v) = \big( \nabla (v\circ\varphi^{-1})
\big)^{t}\circ \varphi . A_{\varphi}(v)$. But, in a local chart, we get
\begin{equation*}
  \left[\big( \nabla (v\circ\varphi^{-1}) \big)^{t}\circ
\varphi\right]_{k}^{i}(x) = \delta^{ij}\delta_{kl}
\left[(d\varphi(x))^{-1}\right]_{j}^{m} \partial_{m}v^{l}(x),
\end{equation*}
where $(d\varphi(x))^{-1}$ is the (pointwise) inverse of the invertible matrix $d\varphi(x)$. Its coefficients are therefore polynomial expressions of the partial derivatives $\partial_{p}\varphi^{q}$ divided by the Jacobian $J_{\varphi}$. Thus
\begin{gather*}
  (\varphi,v) \mapsto  \big( \nabla (v\circ\varphi^{-1}) \big)^{t}\circ \varphi \\
  \DRd{q}\times\HRd{q} \to \mathcal{L}(\HRd{q-1},\HRd{q-1})
\end{gather*}
is smooth since $\HRd{q-1}$ is a multiplicative algebra for $q > 1+d/2$.
To conclude that
\begin{equation*}
    (\varphi,v) \mapsto  Q^{2}_{\varphi}(v), \quad \DRd{q}\times\HRd{q}
\to \HRd{q-r}
\end{equation*}
is smooth, we use the fact that pointwise multiplication extends to a
bounded bilinear mapping
\begin{equation*}
  \HRd{q-1} \times \HRd{q-r} \to \HRd{q-r},
\end{equation*}
if $q-1 > d/2$ and $0 \le q-r \le q-1$ (c.f. Lemma \ref{lem:sobolev_embedding}) and that
\begin{equation*}
    (\varphi,v) \mapsto  A_{\varphi}(v), \quad \DRd{q}\times\HRd{q} \to
\HRd{q-r}
\end{equation*}
is smooth by hypothesis.

(c) We have $Q^{3}_{\varphi}(v) = \big(
\dive(v\circ\varphi^{-1})\circ\varphi \big)  A_{\varphi} (v)$. But
\begin{equation*}
    \dive(v\circ\varphi^{-1})\circ\varphi =
\left[(d\varphi)^{-1}\right]_{i}^{j} \partial_{j}v^{i}
\end{equation*}
and we conclude as in (b) that
\begin{equation*}
    (\varphi,v) \mapsto \dive(v\circ\varphi^{-1})\circ\varphi, \quad \DRd{q}\times\HRd{q} \to
H^{q-1}(\Rd,\RR)
\end{equation*}
is smooth and that
\begin{equation*}
    (\varphi,v) \mapsto  Q^{3}_{\varphi}(v), \quad \DRd{q}\times\HRd{q}
\to \HRd{q-r}
\end{equation*}
is smooth.

(d) The set
\begin{equation*}
  \mathrm{Isom}(\HRd{q},\HRd{q-r})
\end{equation*}
is open in
\begin{equation*}
  \mathcal{L}(\HRd{q},\HRd{q-r})
\end{equation*}
and the mapping
\begin{gather*}
  P \mapsto P^{-1}, \\
  \mathrm{Isom}(\HRd{q},\HRd{q-r}) \to \mathcal{L}(\HRd{q-r},\HRd{q})
\end{gather*}
is smooth (even real analytic). Besides
\begin{equation*}
  A_{\varphi} \in \mathrm{Isom}(\HRd{q},\HRd{q-r}),
\end{equation*}
for all $\varphi \in \DRd{q}$, and the mapping
\begin{equation*}
    \varphi \mapsto A_{\varphi}, \quad \DRd{q} \to
\mathrm{Isom}(\HRd{q},\HRd{q-r})
\end{equation*}
is smooth. Thus
\begin{equation*}
    (\varphi,w) \mapsto  A^{-1}_{\varphi}(w), \quad  \HRd{q-r} \to \HRd{q}
\end{equation*}
is smooth.
\end{proof}

\subsection{Local well-posedness}

The local existence of geodesics on the Hilbert manifold $\DRd{q}$
follows from the Picard-Lindel\"{o}f (or Cauchy-Lipschitz) theorem, due to
the smoothness of the extended spray on $\DRd{q}$:

\begin{thm}\label{thm:smooth_flow-Hq}
Let $A$ be a Fourier multiplier in the class $\mathcal{E}^{r}$ with $r \ge 1$. Let $q > 1 + d/2$ with $q-r \ge 0$. Consider the geodesic flow on the tangent bundle $T\DRd{q}$ induced by the inertia operator $A$. Then, given any $(\varphi_{0},v_{0})\in
T\DRd{q}$, there exists a unique non-extendable geodesic
\begin{equation*}
  (\varphi, v)\in C^\infty(J,T\DRd{q})
\end{equation*}
on the maximal interval of existence $J$, which is open and contains $0$.
\end{thm}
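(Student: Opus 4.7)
The plan is to reduce this statement to a direct application of the Picard–Lindelöf (Cauchy–Lipschitz) theorem on the Hilbert manifold $T\DRd{q}$, once the geodesic spray is known to extend smoothly. All the real work has already been done in Theorem~\ref{thm:smoothness_spray}; what remains is essentially the standard Banach ODE machinery.

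First I would invoke Theorem~\ref{thm:smoothness_spray}, which asserts that under the assumptions $A \in \mathcal{E}^{r}$, $r \ge 1$, $q > 1 + d/2$ and $q - r \ge 0$, the map $(\varphi, v) \mapsto S_{\varphi}(v)$ extends smoothly to $\DRd{q} \times \HRd{q}$. Consequently the second-order geodesic spray
\begin{equation*}
  F(\varphi, v) = (\varphi, v, v, S_{\varphi}(v))
\end{equation*}
is a smooth vector field on the tangent bundle $T\DRd{q} \cong \DRd{q} \times \HRd{q}$, which is an open subset of the Hilbert space $\HRd{q} \times \HRd{q}$.

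Next I would apply the Cauchy–Lipschitz theorem for smooth vector fields on open subsets of Banach spaces (see e.g.~\cite{Lan1999}): for any initial data $(\varphi_0, v_0) \in T\DRd{q}$, there exists an open interval $J_0 \ni 0$ and a unique $C^{1}$ integral curve $(\varphi, v) : J_0 \to T\DRd{q}$ of $F$ with $(\varphi(0), v(0)) = (\varphi_0, v_0)$. The smoothness of $F$ then yields, by the usual bootstrap on the ODE $\dot\gamma = F(\gamma)$, that $(\varphi, v) \in C^{\infty}(J_0, T\DRd{q})$.

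Finally, the existence of a unique non-extendable geodesic on a maximal open interval $J \ni 0$ follows from the standard gluing procedure: the union of the domains of all local solutions with initial datum $(\varphi_0, v_0)$ is an open interval, and uniqueness forces them to coincide on overlaps, so they glue into a single smooth integral curve which cannot be extended beyond $J$ by maximality. The main conceptual obstacle — extending the spray smoothly across the loss of one derivative inherent in the EPDiff equation — has already been overcome in Theorem~\ref{thm:smoothness_spray} via the commutator estimates of Lemma~\ref{lem:A_n} and the ellipticity of $A \in \mathcal{E}^{r}$; no further analytic difficulty enters at this stage.
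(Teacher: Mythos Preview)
Your proposal is correct and follows exactly the paper's own approach: the paper simply states that local existence follows from the Picard--Lindel\"{o}f theorem, due to the smoothness of the extended spray established in Theorem~\ref{thm:smoothness_spray}. Your write-up is in fact more detailed than the paper's one-sentence justification, but the underlying argument is identical.
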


And we obtain well-posedness of the Euler equation in $\HRd{q}$.

\begin{cor}\label{cor:Euler-well-posedness-Hq}
The corresponding Euler equation has, for any initial data
$u_{0}\in\HRd{q}$, a unique non-extendable smooth solution
\begin{equation*}
  u\in C^{0}(J,\HRd{q}) \cap C^{1}(J,\HRd{q-1}).
\end{equation*}
The maximal interval of existence $J$ is open and contains $0$.
\end{cor}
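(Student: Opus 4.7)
The plan is to transfer the local well-posedness from the Lagrangian picture on $T\DRd{q}$ (Theorem~\ref{thm:smooth_flow-Hq}) to the Eulerian picture on $\HRd{q}$ via the change of variable $u:=v\circ\ph^{-1}$. Given $u_{0}\in\HRd{q}$, I would apply Theorem~\ref{thm:smooth_flow-Hq} with initial data $(\ph_{0},v_{0})=(\id,u_{0})\in T\DRd{q}$ to produce a unique non-extendable geodesic $(\ph,v)\in C^{\infty}(J,T\DRd{q})$ on a maximal open interval $J\ni 0$. Then I would set $u(t):=v(t)\circ\ph(t)^{-1}$ and check the two regularity claims separately.

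Continuity $u\in C^{0}(J,\HRd{q})$ is immediate: the curve $\ph(\cdot)^{-1}$ is continuous into $\DRd{q}$ since $\DRd{q}$ is a topological group, while the right action $\mu^{q}:\HRd{q}\times\DRd{q}\to\HRd{q}$ is continuous by Lemma~\ref{lem:composition}. The $C^{1}$-regularity in $\HRd{q-1}$ is obtained by computing $u_{t}$: differentiating the identity $v=u\circ\ph$ in $t$ (which is legitimate at the level $\HRd{q-1}$, where pointwise multiplication $\HRd{q}\times\HRd{q-1}\to\HRd{q-1}$ is bounded by Lemma~\ref{lem:sobolev_embedding}) yields $v_{t}=(u_{t}+\nabla_{u}u)\circ\ph$. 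Combining this with the right-invariance of the spray, $v_{t}=S_{\ph}(v)=S(u)\circ\ph$, and using $S(u)=\nabla_{u}u-B(u,u)$, one obtains $u_{t}=-B(u,u)$, i.e., the Euler equation. Continuity of the right-hand side in $\HRd{q-1}$ is then automatic: $\nabla_{u}u$ depends continuously on $u\in\HRd{q}$, and $S(u)$ lies in $\HRd{q}$ continuously by Theorem~\ref{thm:smoothness_spray}.

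For uniqueness and non-extendability, I would reverse the construction: any other solution $\tilde u\in C^{0}(\tilde J,\HRd{q})\cap C^{1}(\tilde J,\HRd{q-1})$ of the Euler equation with $\tilde u(0)=u_{0}$ generates, via the non-autonomous flow $\tilde\ph_{t}=\tilde u\circ\tilde\ph$ with $\tilde\ph(0)=\id$, a curve $\tilde\ph\in C^{1}(\tilde J,\DRd{q})$ such that $(\tilde\ph,\tilde u\circ\tilde\ph)$ is a geodesic on $T\DRd{q}$. The uniqueness and maximality statements of Theorem~\ref{thm:smooth_flow-Hq} then force $\tilde J\subset J$ and $\tilde u=u$ on $\tilde J$.

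The main obstacle is to make the derivation $u_{t}=-B(u,u)$ rigorous: because $\mu^{q}$ is only continuous on $\HRd{q}$ (and not $C^{1}$), one cannot differentiate $v\circ\ph^{-1}$ directly at the regularity $\HRd{q}$. The standard workaround—going back to Ebin--Marsden—is precisely to accept a loss of one derivative and carry out the chain rule in $\HRd{q-1}$, where both the Sobolev multiplication and the action of the first-order operator $u\mapsto \nabla_{u}u$ are bounded. This is exactly why the statement cannot assert $u\in C^{1}(J,\HRd{q})$ and only yields $u\in C^{1}(J,\HRd{q-1})$.
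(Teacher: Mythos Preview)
The paper gives no proof of this corollary; it is stated immediately after Theorem~\ref{thm:smooth_flow-Hq} as a direct consequence. Your outline supplies exactly the standard Ebin--Marsden passage from Lagrangian to Eulerian coordinates, and its overall structure (existence via $u=v\circ\ph^{-1}$, uniqueness by reconstructing the flow) is correct.

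One point deserves tightening. When you write ``differentiating the identity $v=u\circ\ph$ in $t$ \dots\ yields $v_{t}=(u_{t}+\nabla_{u}u)\circ\ph$'', you are implicitly assuming that $u$ is already $C^{1}$ in time, which is precisely what you want to establish; as written the step is circular. The clean way around this is to use instead that the map $(\ph,v)\mapsto v\circ\ph^{-1}$ is $C^{1}$ from $\DRd{q}\times\HRd{q}$ into $\HRd{q-1}$ (the one-derivative loss version of the composition/inversion regularity results in~\cite{IKT2013,EM1970}); composing with the $C^{\infty}$ curve $t\mapsto(\ph(t),v(t))$ then gives $u\in C^{1}(J,\HRd{q-1})$ directly, after which your computation of $u_{t}$ is a legitimate chain-rule identity rather than an assumption. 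You essentially say this in your final paragraph, so this is more a matter of reorganizing the logic than a genuine gap.
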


The remarkable observation that the maximal interval of existence is independent of the
parameter $q$, due to the right-invariance of the spray (cf.
lemma~\ref{lem:noloss}) was pointed out in~\cite[Theorem 12.1]{EM1970}.
This makes it possible to avoid Nash--Moser type schemes to prove local
existence of smooth geodesics in the smooth category.

\begin{lem}[No loss, nor gain]\label{lem:noloss}
Given $(\varphi_{0},v_{0})\in T\DRd{q+1}$, we have
\begin{equation*}
  J_{q+1}(\varphi_{0},v_{0})= J_{q}(\varphi_{0},v_{0}),
\end{equation*}
for $q> 1 + d/2$ and $q-r \ge 0$.
\end{lem}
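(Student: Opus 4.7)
The inclusion $J_{q+1}(\varphi_0,v_0)\subseteq J_q(\varphi_0,v_0)$ is immediate: via the continuous embedding $T\DRd{q+1}\hookrightarrow T\DRd{q}$, any $H^{q+1}$-geodesic is in particular an $H^q$-geodesic, and the uniqueness statement of Theorem~\ref{thm:smooth_flow-Hq} forces the two to agree on $J_{q+1}$. For the reverse inclusion the plan is an openness--closedness argument on
\begin{equation*}
  K := \bigl\{ t\in J_q(\varphi_0,v_0) : (\varphi(t),v(t)) \in T\DRd{q+1} \bigr\},
\end{equation*}
the goal being to show $K=J_q(\varphi_0,v_0)$.

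By assumption $0\in K$. Openness of $K$ in $J_q$ follows directly from Theorem~\ref{thm:smooth_flow-Hq} applied at regularity $q+1$: given $t_0\in K$, the pair $(\varphi(t_0),v(t_0))$ is a valid initial datum for the $H^{q+1}$-flow, which produces an $H^{q+1}$-curve on a neighbourhood of $t_0$; by uniqueness of the $H^q$-flow this curve must coincide with $(\varphi,v)$, so nearby times lie in $K$.

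The technical heart of the proof, and what I expect to be the main obstacle, is closedness of $K$ in $J_q$, i.e.\ excluding loss of $H^{q+1}$-regularity on a compact subinterval $[0,T]\subset J_q$. My plan here is to exploit the right-invariance of the spray: by Lemma~\ref{lem:composition} one may reduce to the case $\varphi_0=\id$, and then pass to the Eulerian velocity $u(t):=v(t)\circ\varphi(t)^{-1}$, which obeys the $\varphi$-independent EPDiff equation~\eqref{eq:EPDiff}. Applying $\Lambda^{q+1}:=\op{(1+\abs{\xi}^2)^{(q+1)/2}}$ to~\eqref{eq:EPDiff}, pairing in $L^2$ with $\Lambda^{q+1}u$, and using commutator bounds for Fourier multipliers of class $\mathcal{S}^r$ in the spirit of Lemma~\ref{lem:A_n} together with the algebra property of Lemma~\ref{lem:pointwise_multiplication}, one expects a Gronwall-type inequality of the form
\begin{equation*}
  \tfrac{d}{dt}\|u(t)\|_{H^{q+1}}^2 \leq C\bigl(\|u(t)\|_{H^q}\bigr)\,\|u(t)\|_{H^{q+1}}^2.
\end{equation*}
Since $(\varphi,v)$ stays in a bounded subset of $T\DRd{q}$ on $[0,T]$, Lemma~\ref{lem:composition} yields a uniform bound on $\|u(\cdot)\|_{H^q}$, whence Gronwall provides an a priori $H^{q+1}$-bound on $u$; transferring this back to $(\varphi,v)$ through $v=u\circ\varphi$ and $\varphi_t=u\circ\varphi$ (again by Lemma~\ref{lem:composition}) gives boundedness of $(\varphi,v)$ in $T\DRd{q+1}$ on $[0,T]$, which delivers closedness and hence $K=J_q(\varphi_0,v_0)$. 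The expected difficulty is the commutator estimate itself, as it must be carried out for a general elliptic Fourier multiplier $A\in\mathcal{E}^r$ rather than a concrete differential operator.
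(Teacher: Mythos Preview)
Your outline is correct but follows a genuinely different route from the paper. You argue by openness/closedness and plan to obtain closedness via an $H^{q+1}$ energy estimate on the Eulerian velocity, using commutator bounds and Gronwall. The paper instead uses the classical Ebin--Marsden symmetry argument and avoids energy estimates entirely: the translation group $\psi_s^u(x)=x+su$, for constant $u\in\Rd$, acts by right composition as a one-parameter group of isometries of $(\DRd{q},G)$ and therefore leaves the spray and its flow $\Phi_q$ invariant. When $(\varphi_0,v_0)\in T\DRd{q+1}$ the curve $s\mapsto\psi_s^u\cdot(\varphi_0,v_0)$ is $C^1$ into $T\DRd{q}$, and differentiating the equivariance relation $\Phi_q(t,\psi_s^u\cdot(\varphi_0,v_0))=\psi_s^u\cdot\Phi_q(t,(\varphi_0,v_0))$ in $s$ at $s=0$ gives $(T\varphi(t).u,\nabla_u v(t))\in\HRd{q}\times\HRd{q}$ for every constant $u$ and every $t\in J_q$, i.e.\ $(\varphi(t),v(t))\in T\DRd{q+1}$ on all of $J_q$. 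Thus the extra derivative is propagated purely by the smooth dependence of $\Phi_q$ on initial data in $T\DRd{q}$, with no PDE estimate needed. Your approach can be made to work once the Kato--Ponce type commutator estimate you flag is established, but at noticeably higher technical cost; what it buys is that it does not rely on an isometry action by an external group, so it is the kind of argument one reaches for when such a symmetry is absent.
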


\begin{proof}
Let $u$ be a \emph{constant} vector field on $\RR^{d}$. Its flow
\begin{equation*}
  \psi^{u}_{s}(x) := x + su
\end{equation*}
does not belongs to $\DRd{q}$ because $u \notin \HRd{q}$. However, if $\varphi\in\DRd{q}$, then $\varphi\circ\psi^{u}_{s} \in \DRd{q}$ and
the flow $\psi^{u}_{s}$ may be considered as a one parameter group of (smooth) isometries of the (weak) Riemannian manifold $\DRd{q}$. The action on $\DRd{q}$ is defined by
\begin{equation*}
  \big(\psi^{u}_{s} \cdot \varphi \big)(x): = \varphi(x+su),\quad
\varphi \in \DRd{q},\quad x\in\RR.
\end{equation*}
and the induced action on $T\DRd{q}$ is given by
\begin{equation*}
  \big(\psi^{u}_{s} \cdot (\varphi,v)\big)(x): = (\varphi(x+su),v(x+su)),\quad (\varphi,v)\in T\DRd{q},\quad x\in\RR.
\end{equation*}
Because $\psi^{u}_{s}$ is a Riemannian isometry, the geodesic spray $F_{q}$ is invariant under $\psi^{u}_{s}$ and the same is true for its flow $\Phi_{q}$. Hence
\begin{equation*}
  \Phi_{q}(t,\psi^{u}_{s} \cdot (\varphi_{0},v_{0})) = \psi^{u}_{s} \cdot \Phi_{q}(t,(\varphi_{0},v_{0})),
\end{equation*}
for all $t\in J_{q}(\varphi_{0},v_{0})$ and $s\in\RR$.

Now, note that if $(\varphi,v)\in T\DRd{q+1}$, then\footnote{We will avoid to
write $T\psi^{u}_{s}$, $T(TR\psi^{u}_{s})$, \dots and simply keep the
notation $\psi^{u}_{s}$.}
\begin{equation*}
  s \mapsto \psi^{u}_{s} \cdot (\varphi,v), \quad \RR \to T\DRd{q}
\end{equation*}
is a $C^{1}$ mapping, and that
\begin{equation*}
  \frac{d}{ds} \psi^{u}_{s}\cdot (\varphi,v) = (T\varphi . u, \nabla_{u} v).
\end{equation*}
Therefore, if $(\varphi_{0},v_{0})\in T\DRd{q+1}$, we get
\begin{equation*}
    \left.\frac{d}{ds}\right\vert_{s=0} \Phi_{q}(t, \psi^{u}_{s} \cdot
(\varphi_{0},v_{0})) =
\partial_{(\varphi,v)}\Phi_{q}(t,(\varphi_{0},v_{0})).(T\varphi_{0} . u,
\nabla_{u} v_{0}),
\end{equation*}
and thus
\begin{equation*}
  \partial_{(\varphi,v)}\Phi_{q}(t,(\varphi_{0},v_{0})).(T\varphi_{0} . u, \nabla_{u} v_{0}) = (T\varphi(t) . u, \nabla_{u} v(t)).
\end{equation*}
But
\begin{equation*}
  \partial_{(\varphi,v)}\Phi_{q}(t,(\varphi_{0},v_{0})).(T\varphi_{0} . u, \nabla_{u} v_{0}) \in \HRd{q}\times\HRd{q},
\end{equation*}
for all constant vector field $u$, and hence
\begin{equation*}
  (\varphi(t),v(t))\in T\DRd{q+1}\quad\text{for all}\quad t\in J_{q}(\varphi_{0},v_{0}).
\end{equation*}
We conclude therefore that
\begin{equation*}
    J_{q}(\varphi_{0},v_{0}) = J_{q+1}(\varphi_{0},v_{0}),
\end{equation*}
which completes the proof.
\end{proof}

\begin{rem}\label{rem:iterated_noloss}
An iteration of the above argument shows that for initial data $(\varphi_{0},v_{0})\in T\DRd{q+k}$ with $k\geq1$, we also have
\begin{equation*}
  J_{q+k}(\varphi_{0},v_{0})= J_{q}(\varphi_{0},v_{0}).
\end{equation*}
\end{rem}

\begin{rem}\label{rem:noloss_nogain}
Lemma~\ref{lem:noloss} states that there is no loss of spatial
regularity during the evolution. By reversing the time direction, it
follows from the unique solvability that there is also no gain of
regularity in the following sense: Let $(\varphi_{0},v_{0})\in T\DRd{q}$
be given and assume that $(\varphi(t_{1}),v(t_{1}))\in T\DRd{q+1}$ for some
$t_{1}\in J_{q}(\varphi_{0},v_{0})$. Then $(\varphi_{0},v_{0})\in
T\DRd{q+1}$.
\end{rem}

We get therefore the following local existence result.

\begin{thm}\label{thm:smooth_flow-Hinfty}
Let $A$ be a Fourier multiplier in the class $\mathcal{E}^{r}$ with $r \ge 1$ and
consider the geodesic flow on the tangent bundle $T\DiffRd$. Then, given any $(\varphi_{0},v_{0})\in
T\DiffRd$, there exists a unique non-extendable geodesic
\begin{equation*}
  (\varphi, v)\in C^\infty(J,T\DiffRd)
\end{equation*}
on the maximal interval of existence $J$, which is open and contains $0$.
\end{thm}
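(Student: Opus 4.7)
The plan is to lift the local existence result of Theorem~\ref{thm:smooth_flow-Hq} from each Sobolev completion $T\DRd{q}$ to the smooth Fr\'{e}chet category $T\DiffRd$, the key technical ingredient being the no-loss-no-gain principle of Lemma~\ref{lem:noloss} together with its iteration in Remark~\ref{rem:iterated_noloss}.

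First, fix $(\varphi_{0},v_{0}) \in T\DiffRd$ and choose some $q_{0}$ with $q_{0} > 1 + d/2$ and $q_{0} - r \ge 0$. Since $v_{0} \in \HRd{\infty} \subset \HRd{q_{0}+k}$ for every integer $k \ge 0$, the pair $(\varphi_{0},v_{0})$ lies in $T\DRd{q_{0}+k}$ for every such $k$. Applying Theorem~\ref{thm:smooth_flow-Hq} at level $q_{0}$ yields a unique non-extendable geodesic $(\varphi,v) \in C^{\infty}(J_{q_{0}},T\DRd{q_{0}})$ with $J_{q_{0}}$ an open interval containing $0$.

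Second, by Remark~\ref{rem:iterated_noloss}, we have $J_{q_{0}+k}(\varphi_{0},v_{0}) = J_{q_{0}}(\varphi_{0},v_{0})$ for every $k \ge 0$, so the trajectory provided by Theorem~\ref{thm:smooth_flow-Hq} at level $q_{0}+k$ coincides with $(\varphi,v)$ on the common domain, and moreover $(\varphi(t),v(t)) \in T\DRd{q_{0}+k}$ for all $t \in J_{q_{0}}$. Taking the intersection over all $k$, we conclude that $(\varphi(t),v(t)) \in T\DiffRd$ for every $t \in J_{q_{0}}$. Set $J := J_{q_{0}}$; by construction this interval is the maximal existence interval at every Sobolev level $q \ge q_{0}$.

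Third, to obtain smoothness in the Fr\'{e}chet category, recall that $\HRd{\infty}$ carries the projective-limit topology of the Hilbert spaces $\HRd{q}$, and that $\DiffRd \subset \id + \HRd{\infty}$ inherits a smooth manifold structure modelled on this Fr\'{e}chet space. A map from $J$ into such a projective limit is smooth if and only if it is smooth into each factor $T\DRd{q}$. Since Theorem~\ref{thm:smooth_flow-Hq} provides $(\varphi,v) \in C^{\infty}(J,T\DRd{q})$ for every admissible $q$, it follows that $(\varphi,v) \in C^{\infty}(J,T\DiffRd)$. Uniqueness and non-extendability on the smooth level follow immediately: any other smooth geodesic through $(\varphi_{0},v_{0})$ is in particular a geodesic in $T\DRd{q_{0}}$ and hence coincides with ours; conversely, a hypothetical proper extension in $T\DiffRd$ would extend the solution at level $q_{0}$, contradicting maximality of $J_{q_{0}}$.

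The only substantive step is the stability of the maximal existence interval under increasing Sobolev regularity, which is precisely the content of Lemma~\ref{lem:noloss} (and its iteration). Everything else is a bookkeeping exercise at the level of projective limits, together with the standard reformulation of the tangent bundle as $T\DiffRd \cong \DiffRd \times \HRd{\infty}$ used throughout the paper.
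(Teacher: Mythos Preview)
Your proof is correct and follows exactly the approach intended in the paper: the theorem is stated immediately after Lemma~\ref{lem:noloss} and Remarks~\ref{rem:iterated_noloss}--\ref{rem:noloss_nogain} with the words ``We get therefore the following local existence result'', and your argument is precisely the projective-limit bookkeeping that this sentence leaves implicit. There is nothing to add.
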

\begin{rem}
Note, that this does not prove positivity of the induced geodesic distance. In fact, it has been shown in~\cite{BBHM2013,BBM2013,BBHM2012}, that there are inertia operators with symbols in $\mathcal{S}^{1}$ such that the induced geodesic distance vanishes identically on $\operatorname{Diff}(S^1)$.
For diffeomorphism groups on general manifolds, it has only been shown that the geodesic distance vanishes if the inertia operator is of class
$\mathcal{S}^{r}$, $r>1$. The boundary case $r=1$ remains open so far.
\end{rem}

We also obtain well-posedness of the Euler equation.

\begin{cor}\label{cor:Euler-well-posedness-Hinfty}
The corresponding Euler equation has for any initial data
$u_{0}\in\CS(\RR^{d})$ a unique non-extendable smooth solution
\begin{equation*}
  u\in C^\infty(J,\CS(\RR^{d})).
\end{equation*}
The maximal interval of existence $J$ is open and contains $0$.
\end{cor}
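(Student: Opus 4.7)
The plan is to deduce this corollary from Theorem~\ref{thm:smooth_flow-Hinfty} by transferring the Lagrangian geodesic information to the Eulerian side. The initial velocity $u_{0}$ should be taken in $\HRd{\infty}$ (which indeed sits inside $C^{\infty}(\Rd,\Rd)$ by the Sobolev embedding $\HRd{q+r} \hookrightarrow C^{r}_{0}(\Rd,\Rd)$ of Lemma~\ref{lem:sobolev_embedding}, so the $C^\infty$ statement is covered). With $(\id, u_{0}) \in T\DiffRd$ as initial data, Theorem~\ref{thm:smooth_flow-Hinfty} furnishes a unique non-extendable geodesic $(\varphi, v) \in C^{\infty}(J, T\DiffRd)$ on an open maximal interval $J \ni 0$.

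I would then define the Eulerian velocity $u(t) := v(t) \circ \varphi(t)^{-1}$, i.e.\ the right-logarithmic derivative of $\varphi$. By construction, $u$ is exactly the Euler--Arnold velocity associated with the geodesic, and Arnold's computation---carried out rigorously in Theorem~\ref{thm:EPDiff}---shows that it satisfies the EPDiff equation~\eqref{eq:EPDiff}. To transfer the time regularity, I would invoke the Hermas--Djebali theorem: $\DiffRd$ is a regular Fr\'{e}chet--Lie group, so composition and inversion are smooth in the $H^\infty$-topology, and therefore $u \in C^{\infty}(J, \HRd{\infty})$. Continuity of the Fr\'{e}chet embedding $\HRd{\infty} \hookrightarrow C^{\infty}(\Rd,\Rd)$ then upgrades this to $u \in C^{\infty}(J, C^{\infty}(\Rd,\Rd))$, as required.

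Uniqueness and maximality of $J$ follow from the corresponding Lagrangian properties. Indeed, if $\tilde{u}$ is another smooth solution on some $\tilde{J} \ni 0$ with $\tilde{u}(0) = u_{0}$, one may solve the ODE $\tilde{\varphi}_{t} = \tilde{u} \circ \tilde{\varphi}$ with $\tilde{\varphi}(0) = \id$ in $\DiffRd$ (again using regularity of the Lie group); the pair $(\tilde{\varphi}, \tilde{u} \circ \tilde{\varphi})$ is then a geodesic with initial data $(\id, u_{0})$, so Theorem~\ref{thm:smooth_flow-Hinfty} forces $\tilde{J} \subseteq J$ and $\tilde{u} = u$ on $\tilde{J}$.

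The step I expect to require the most care is the passage from the Lagrangian smoothness of $(\varphi, v)$ to the smoothness of $u = v \circ \varphi^{-1}$ as a curve into $\HRd{\infty}$, since composition in $\DRd{q}$ is only continuous. A clean alternative that sidesteps the Fr\'{e}chet--Lie group machinery is to apply Corollary~\ref{cor:Euler-well-posedness-Hq} at every scale $q > 1+d/2$ and combine it with the no-loss/no-gain result in Lemma~\ref{lem:noloss} and Remark~\ref{rem:iterated_noloss}: the maximal interval $J$ is independent of $q$, and the solution belongs to every $\HRd{q+k}$, hence to $\HRd{\infty}$, at each time of $J$. Iterating the ODE $\varphi_{t} = u\circ\varphi$ in $\DRd{q}$ for arbitrarily large $q$ then yields the $C^{\infty}$-in-time regularity at the $H^{\infty}$-level, closing the argument.
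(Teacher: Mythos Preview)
Your proposal is correct and essentially aligned with what the paper intends. The paper states this corollary without proof, treating it as an immediate consequence of Theorem~\ref{thm:smooth_flow-Hinfty}; your first approach---transferring the Lagrangian geodesic $(\varphi,v)\in C^\infty(J,T\DiffRd)$ to the Eulerian velocity $u=v\circ\varphi^{-1}$ via the smoothness of composition and inversion on the regular Fr\'{e}chet--Lie group $\DiffRd$---is precisely the missing step the reader is expected to supply, and your uniqueness argument via reconstruction of the flow is the standard one.

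One small comment: your alternative route through Corollary~\ref{cor:Euler-well-posedness-Hq} at every $q$ gives only $u\in C^{0}(J,\HRd{q})\cap C^{1}(J,\HRd{q-1})$ for each $q$, which by itself yields $u\in C^{0}(J,\HRd{\infty})$ but not immediately $C^\infty$-in-time. To close that bootstrap you either need to differentiate the Euler equation $u_t=-B(u,u)$ repeatedly (using that $B$ is continuous on $\HRd{\infty}$), or simply fall back on the Fr\'{e}chet--Lie group argument you already gave. Either works, but the first approach is the cleaner one and is what the paper has in mind.
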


\section{Global well-posedness}
\label{sec:global-well-posedness}

In this section we let $s > 1 + d/2$ and $A \in \operatorname{Isom}(\HRd{s},\HRd{-s})$ be an invertible Fourier multiplier of class $\mathcal{E}^{2s}$. Similarly, as in Section~\ref{sec:strongmetric}, we can decompose the operator as $A := B^{*}B$. According to Theorem~\ref{thm:smoothness-strong-metric}, the operator $A := B^{*}B$ induces a \emph{strong and smooth} Riemannian metric on the Hilbert manifold $\DRd{s}$. In that case, the associated spray is smooth (see~\cite{Lan1999} for instance). Thus we obtain the local well-posedness on the Hilbert manifold $\DRd{s}$:

\begin{lem}\label{lem:local-well_strong}
Let $A=B^{*}B $ be an invertible Fourier multiplier of class $\mathcal{E}^{2s}$ with $s > 1 + d/2$. Consider the geodesic flow on the tangent bundle $T\DRd{s}$ induced by the inertia operator $A$. Then, given any $(\varphi_{0},v_{0})\in T\DRd{s}$, there exists a unique non-extendable geodesic
\begin{equation*}
  (\varphi, v)\in C^\infty(J,T\DRd{s})
\end{equation*}
on the maximal interval of existence $J$, which is open and contains $0$.
\end{lem}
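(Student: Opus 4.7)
The plan is to reduce the assertion to two ingredients: (i) the smoothness of the strong Riemannian metric $G$ induced by $A = B^{*}B$ on $\DRd{s}$, which will be obtained by checking the hypotheses of Theorem~\ref{thm:smoothness-strong-metric}; and (ii) a classical result from Banach Riemannian geometry which guarantees that any smooth strong Riemannian metric on a Hilbert manifold admits a smooth geodesic spray. Once both are in place, the conclusion is a direct application of the Picard--Lindel\"of theorem to the spray, viewed as a smooth vector field on the Hilbert manifold $T\DRd{s}$.

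First, I would establish that $\varphi \mapsto B_{\varphi}$ is smooth as a mapping $\DRd{s} \to \mathcal{L}(\HRd{s}, \LRd)$. By Lemma~\ref{lem:square-root}, the positive square root $B$ of $A \in \mathcal{E}^{2s}$ lies in $\mathcal{E}^{s} \subset \mathcal{S}^{s}$. Since $s > 1 + d/2 \ge 1$, the hypotheses of Theorem~\ref{thm:smoothness-of-conjugates} are satisfied with $q = s$ and $r = s$ (so that $q - r = 0 \ge 0$); this yields the required smoothness of $\varphi \mapsto B_{\varphi}$. Theorem~\ref{thm:smoothness-strong-metric} then applies verbatim and shows that $A$ induces a smooth and strong Riemannian metric on $\DRd{s}$.

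The second ingredient is the fact that on a Hilbert manifold endowed with a smooth strong Riemannian metric, the flat map $\tilde G_{\varphi}$ is a topological linear isomorphism at every point, and this allows one to define the metric spray intrinsically as the Hamiltonian vector field of the energy with respect to the induced (strong) symplectic form on $T\DRd{s}$; moreover, the spray inherits smoothness from the smoothness of $G$. This is the classical construction carried out in~\cite{Lan1999}, and it is precisely the property that fails for weak metrics, hence the need for the detailed analysis of Section~\ref{sec:well-posedness} in the weak case. Applied to our setting, it produces a smooth geodesic spray $F : T\DRd{s} \to TT\DRd{s}$.

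Finally, since $T\DRd{s} \cong \DRd{s} \times \HRd{s}$ is an open subset of a Hilbert space and $F$ is smooth, the Cauchy--Lipschitz theorem in Banach spaces provides, for every initial datum $(\varphi_{0}, v_{0}) \in T\DRd{s}$, a unique non-extendable integral curve $(\varphi, v) \in C^{\infty}(J, T\DRd{s})$ on an open maximal interval $J \ni 0$; smoothness in time follows from smoothness of the spray by the standard bootstrap argument for ODEs with a smooth right-hand side. The main (and essentially only) obstacle is the verification in the previous paragraph that the strong metric hypothesis really supplies a smooth spray for free; once Theorem~\ref{thm:smoothness-strong-metric} delivers the smooth strong metric, the rest is routine Hilbert-manifold ODE theory.
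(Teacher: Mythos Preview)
Your proposal is correct and follows essentially the same route as the paper: invoke Theorem~\ref{thm:smoothness-strong-metric} to obtain a smooth strong metric on $\DRd{s}$, cite~\cite{Lan1999} for the smoothness of the associated spray, and then apply Picard--Lindel\"of. If anything, you are more explicit than the paper in verifying the hypothesis of Theorem~\ref{thm:smoothness-strong-metric} via Lemma~\ref{lem:square-root} and Theorem~\ref{thm:smoothness-of-conjugates} with $q=r=s$.
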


For the strong metric case we obtain a much stronger result, namely global existence of geodesics:

\begin{thm}
Let $A=B^{*}B $ be an invertible Fourier multiplier of class $\mathcal{E}^{2s}$ with $s > 1 + d/2$, such that $A\in \operatorname{Isom}(\HRd{s},\HRd{-s})$. Then, given any $(\varphi_{0},v_{0})\in T\DRd{s}$, there exists a unique geodesic
\begin{equation*}
  (\varphi, v)\in C^\infty(\mathbb R,T\DRd{s})
\end{equation*}
that is defined for all time $t\in \mathbb R$.
\end{thm}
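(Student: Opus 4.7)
The plan is to combine the local existence result of Lemma~\ref{lem:local-well_strong} with an infinite-dimensional Hopf-Rinow type theorem in the spirit of~\cite{BV2014}, together with the complete metric structure on $\DRd{s}$ provided by Theorem~\ref{thm:complete_metric}. First, I take the unique maximal geodesic $(\varphi,v) \in C^{\infty}(J, T\DRd{s})$ with $J = (T_-,T_+) \ni 0$ supplied by Lemma~\ref{lem:local-well_strong}, and argue by contradiction, assuming $T_+ < \infty$ (the case $T_- > -\infty$ is symmetric by time reversal).

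Because $G$ is a smooth Riemannian metric and $(\varphi,v)$ is its geodesic, energy is conserved along the flow: $G_{\varphi(t)}(v(t),v(t)) = G_{\varphi_0}(v_0,v_0) =: E_0$ for all $t \in J$. This immediately bounds the Riemannian length of $\varphi|_{[0,t]}$ by $t\sqrt{E_0}$, so $\varphi(t)$ remains in a set of finite diameter for the Riemannian distance $d_G$ associated with $G$ as $t \to T_+^-$, and the tangent vector $v(t)$ stays in a ball for the norm $\sqrt{G_{\varphi(t)}(\cdot,\cdot)}$.

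The core step is to show that $(\DRd{s},d_G)$ is a complete metric space and then invoke the Hopf-Rinow type theorem of~\cite{BV2014}, valid for smooth strong Riemannian Hilbert manifolds, to conclude geodesic completeness. To establish metric completeness, I would compare $d_G$ with the complete distance $d_s$ of Theorem~\ref{thm:complete_metric}. Using the decomposition $A = B^*B$ with $B \in \mathcal{E}^s$ from Lemma~\ref{lem:square-root}, together with the local boundedness of the composition operator (Lemma~\ref{lem:composition}) and the smoothness of $\varphi \mapsto B_\varphi$ established in Theorem~\ref{thm:smoothness-strong-metric}, one verifies that on any $d_G$-bounded subset of $\DRd{s}$ the Riemannian norm $\sqrt{G_\varphi(\cdot,\cdot)}$ is uniformly equivalent to the $H^s$-norm and that $J_\varphi$ is uniformly bounded above and away from zero. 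Estimating the $H^s$-length of any piecewise smooth curve by a constant multiple of its Riemannian length then shows that every $d_G$-Cauchy sequence is $d_s$-Cauchy, and therefore converges in $\DRd{s}$ by Theorem~\ref{thm:complete_metric}.

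Once metric completeness is in hand, the Hopf-Rinow result yields that the limit $\lim_{t \to T_+^-}(\varphi(t),v(t))$ exists in $T\DRd{s}$, so restarting the flow of the smooth geodesic spray at this endpoint via Lemma~\ref{lem:local-well_strong} contradicts the maximality of $J$. The main obstacle I expect is the uniform lower bound on $J_{\varphi(t)}$ along the geodesic on $d_G$-bounded sets: a bound on $\|\varphi(t)-\mathrm{id}\|_{H^s}$ alone does not prevent degeneration of the diffeomorphism, so one has to genuinely exploit the product structure $A = B^*B$ of the inertia operator together with energy conservation to keep $\inf_x \det(d\varphi(t)(x))$ uniformly positive up to $t = T_+$.
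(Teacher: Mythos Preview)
Your route is quite different from the paper's, and considerably more laborious. The paper's proof is essentially one line: it invokes \cite[Lemma~5.2]{GMR2009}, which states that any topological group with smooth right translation, equipped with a smooth strong right-invariant metric, is geodesically complete. Since $\DRd{s}$ is such a group and Theorem~\ref{thm:smoothness-strong-metric} supplies the smooth strong metric, the conclusion follows immediately. The mechanism behind \cite{GMR2009} is the standard one: energy conservation for a right-invariant metric bounds the \emph{Eulerian} velocity $u(t)=v(t)\circ\varphi(t)^{-1}$ in $H^{s}$ (because the metric is strong); right-translating the geodesic so that it passes through $\id$ at time $t_{0}$ reduces everything to uniform local existence near $(\id,u(t_{0}))$, which is guaranteed by smoothness of the spray. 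No metric completeness, no comparison with $d_{s}$, and no Jacobian analysis is needed.

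Your approach via metric completeness and a Hopf--Rinow theorem is not wrong in spirit, but the gap you flag is real and you do not close it. The issue is that ``$d_{G}$-bounded'' does not a priori give any control on $\norm{\varphi-\id}_{H^{s}}$ or on $\inf_{x}\det d\varphi(x)$, so Lemma~\ref{lem:composition} cannot be applied directly. The missing step is exactly the right-invariance argument: any $\psi$ in a $d_{G}$-ball around $\id$ is joined to $\id$ by a curve $\varphi(t)$ of bounded $G$-length, and along such a curve the Eulerian velocity $u(t)=\dot\varphi(t)\circ\varphi(t)^{-1}$ satisfies $\int_{0}^{1}\norm{u(t)}_{H^{s}}\,dt\le C$. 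The flow identity $\partial_{t}\det d\varphi=(\dive u\circ\varphi)\det d\varphi$ then gives $\det d\varphi(t)=\exp\bigl(\int_{0}^{t}\dive u(s,\varphi(s))\,ds\bigr)$, which is bounded above and below since $\norm{\dive u}_{\infty}\lesssim\norm{u}_{H^{s}}$ for $s>1+d/2$. A subsequent Gr\"onwall argument using Lemma~\ref{lem:composition} controls $\norm{\varphi(t)-\id}_{H^{s}}$. Once you see this, however, you have essentially reproduced the content of \cite{GMR2009} and the detour through $d_{s}$ and Theorem~\ref{thm:complete_metric} becomes unnecessary: the same estimates let you extend the geodesic directly by restarting at $(\id,u(t_{0}))$. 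In short, the paper's proof is the efficient packaging of the very ingredient you identify as the obstacle.
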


\begin{proof}
In~\cite[Lemma 5.2]{GMR2009} it has been shown that any manifold that is in addition a topological group with smooth right-translation, equipped with a smooth strong metric is geodesically complete. We have shown the smoothness of the metric in Theorem~\ref{thm:smoothness-strong-metric} and thus the result follows.
\end{proof}

\begin{rem}
The smoothness of integral order metrics -- and thus the global well-posedness of the geodesic equation for these metrics -- has been already observed in the article~\cite{EM1970}.
In~\cite[Corollary 7.5]{BV2014} it has been shown that for any smooth and strong Riemannian metric $G$ on $\DRd{s}$ all statements of the theorem of Hopf--Rinow hold, i.e.,
\begin{enumerate}
\item The space $(\DRd{s},G)$ is geodesically complete.
\item The space $(\DRd{s}_0,\operatorname{dist}^G)$ is metrically complete.
\item Any two diffeomorphisms in $\DRd{s}_0$ can be connected by a minimizing geodesic.
\end{enumerate}
Here $\DRd{s}_0\subset \DRd{s}$ denotes the connected component of the identity. However, the smoothness of fractional order metrics has been left open in these articles.
\end{rem}

Using the No--loss--no--gain--Lemma (c.f. Lemma \ref{lem:noloss} and Remark \ref{rem:iterated_noloss}), we are able to
transport the result to the smooth category:

\begin{thm}\label{thm:geod_complete-Hinfty}
Let $A=B^{*}B $ be an invertible Fourier multiplier of class $\mathcal{E}^{2s}$ with $s > 1 + d/2$, such that
$A\in \operatorname{Isom}(\HRd{s},\HRd{-s})$. Then, given any $(\varphi_{0},v_{0})\in
T\DiffRd$, there exists a unique geodesic
\begin{equation*}
  (\varphi, v)\in C^\infty(\mathbb R,T\DiffRd)
\end{equation*}
that is defined for all time $t\in \mathbb R$.
\end{thm}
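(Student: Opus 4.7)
The plan is to combine the strong-metric global existence result on the Hilbert manifold $\DRd{s}$ (the preceding theorem) with the No--loss--no--gain lemma in order to bootstrap the geodesic to the $H^{\infty}$ category. The key observation is that $\DiffRd = \bigcap_{q > 1 + d/2} \DRd{q}$, so once we know that an initial datum in $T\DiffRd$ stays in every $T\DRd{q}$ for all time, it remains in $T\DiffRd$ for all time.

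First I would fix $(\varphi_{0}, v_{0}) \in T\DiffRd$. Since $T\DiffRd \subset T\DRd{s}$, the previous theorem gives a unique geodesic
\begin{equation*}
  (\varphi, v) \in C^{\infty}(\RR, T\DRd{s}),
\end{equation*}
defined for all $t \in \RR$; denote its maximal interval of existence (in $T\DRd{s}$) by $J_{s}(\varphi_{0},v_{0}) = \RR$. The goal is to upgrade this to a curve in $T\DiffRd$.

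Next I would invoke Lemma~\ref{lem:noloss} together with Remark~\ref{rem:iterated_noloss}. Since $(\varphi_{0},v_{0}) \in T\DiffRd \subset T\DRd{s+k}$ for every integer $k \ge 0$, the iterated no--loss--no--gain principle yields
\begin{equation*}
  J_{s+k}(\varphi_{0},v_{0}) = J_{s}(\varphi_{0},v_{0}) = \RR,
\end{equation*}
and therefore $(\varphi(t), v(t)) \in T\DRd{s+k}$ for every $t \in \RR$ and every $k \in \NN$. Intersecting over $k$, we conclude that $(\varphi(t), v(t)) \in T\DiffRd$ for all $t \in \RR$.

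Finally I would address the smoothness statement $(\varphi,v) \in C^{\infty}(\RR, T\DiffRd)$. By definition of the Fréchet topology on $H^{\infty}$, this amounts to $(\varphi,v) \in C^{\infty}(\RR, T\DRd{q})$ for every admissible $q$, which again follows from Theorem~\ref{thm:smooth_flow-Hq} applied at each level $q = s + k$ (the spray being smooth on each $T\DRd{q}$ by Theorem~\ref{thm:smoothness_spray}) combined with the equality of maximal intervals. Uniqueness on $T\DiffRd$ is inherited from uniqueness at the $\DRd{s}$ level. The only real obstacle is justifying the passage to the intersection $\bigcap_{k} \DRd{s+k} = \DiffRd$; this is precisely what the no--loss--no--gain argument delivers, so there is no essentially new difficulty beyond the two ingredients already established.
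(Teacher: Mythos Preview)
Your proposal is correct and mirrors exactly the paper's approach: the paper's entire argument for this theorem is the one sentence ``Using the No--loss--no--gain--Lemma (c.f.\ Lemma~\ref{lem:noloss} and Remark~\ref{rem:iterated_noloss}), we are able to transport the result to the smooth category,'' and you have spelled this out faithfully. One small remark: when you justify smoothness of the spray at each level $q=s+k$ you cite Theorem~\ref{thm:smoothness_spray}, whose hypothesis $q-r\ge 0$ (here $r=2s$) fails for $s\le q<2s$; at the base level $q=s$ the spray smoothness comes instead from the strong--metric argument (Lemma~\ref{lem:local-well_strong}), and the iteration of the translation--equivariance argument in the proof of Lemma~\ref{lem:noloss} can then be run entirely within the smooth flow $\Phi_{s}$ on $T\DRd{s}$, so no separate spray smoothness on the intermediate spaces is actually needed.
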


\begin{rem}
Note, that in the smooth category we do not obtain the result that any two diffeomorphisms can be connected by a minimizing geodesic.
\end{rem}

We finally obtain the global well-posedness of the Euler equation.

\begin{cor}\label{cor:Euler-global-well-posedness-Hinfty}
Given the assumptions of Theorem~\ref{thm:geod_complete-Hinfty}, the corresponding Euler equation has for any initial data
$u_{0}\in\CS(\RR^{d})$ a unique smooth solution
\begin{equation*}
  u\in C^\infty(\mathbb R,\CS(\RR^{d})).
\end{equation*}
that is defined for all time $t$.
\end{cor}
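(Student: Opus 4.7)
The plan is to reduce Corollary \ref{cor:Euler-global-well-posedness-Hinfty} to the geodesic completeness statement of Theorem \ref{thm:geod_complete-Hinfty} via the Arnold correspondence between solutions of the Euler equation and geodesics of the right-invariant metric. Given initial data $u_{0}\in H^{\infty}(\mathbb{R}^{d},\mathbb{R}^{d})$, viewed as a tangent vector at the identity of $\mathrm{Diff}_{H^{\infty}}(\mathbb{R}^{d})$, Theorem \ref{thm:geod_complete-Hinfty} applied to $(\mathrm{id},u_{0})\in T\mathrm{Diff}_{H^{\infty}}(\mathbb{R}^{d})$ produces a unique geodesic $(\varphi,v)\in C^{\infty}(\mathbb{R},T\mathrm{Diff}_{H^{\infty}}(\mathbb{R}^{d}))$ defined for all $t\in\mathbb{R}$.

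Starting from this geodesic, I would define the Eulerian velocity $u(t):=v(t)\circ\varphi(t)^{-1}$. Since $\mathrm{Diff}_{H^{\infty}}(\mathbb{R}^{d})$ is a regular Fr\'{e}chet--Lie group by the Hermas--Djebali theorem, both composition and inversion are smooth operations in the Fr\'{e}chet sense, so the curve $t\mapsto u(t)$ is smooth with values in $H^{\infty}(\mathbb{R}^{d},\mathbb{R}^{d})$. Thus $u\in C^{\infty}(\mathbb{R},H^{\infty}(\mathbb{R}^{d},\mathbb{R}^{d}))$ and $u(0)=v(0)=u_{0}$. That $u$ satisfies the EPDiff equation \eqref{eq:EPDiff} is then immediate from the right-invariance of the spray together with the derivation in Section~\ref{sec:EPDiff}: the geodesic equation in Eulerian coordinates reads $u_{t}=-B(u,u)$, and because $A\in\mathcal{E}^{2s}$ is invertible, Theorem \ref{thm:EPDiff} shows this is precisely \eqref{eq:EPDiff}.

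For uniqueness, I would run the argument in reverse. Given another smooth solution $\tilde u\in C^{\infty}(\mathbb{R},H^{\infty})$ of EPDiff with $\tilde u(0)=u_{0}$, solve the Lagrangian flow equation $\partial_{t}\tilde\varphi=\tilde u\circ\tilde\varphi$, $\tilde\varphi(0)=\mathrm{id}$, in the regular Fr\'{e}chet--Lie group $\mathrm{Diff}_{H^{\infty}}(\mathbb{R}^{d})$, and set $\tilde v:=\tilde u\circ\tilde\varphi$. A direct computation shows that $(\tilde\varphi,\tilde v)$ is an integral curve of the geodesic spray with initial condition $(\mathrm{id},u_{0})$, so the uniqueness part of Theorem \ref{thm:geod_complete-Hinfty} forces $(\tilde\varphi,\tilde v)=(\varphi,v)$ and hence $\tilde u=u$.

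The main point that requires care -- although essentially all of the analytic work has already been absorbed into Theorems \ref{thm:smoothness-strong-metric} and \ref{thm:geod_complete-Hinfty} -- is the passage between the Lagrangian description $(\varphi,v)\in T\mathrm{Diff}_{H^{\infty}}$ and the Eulerian description $u\in H^{\infty}$. The smoothness of this passage is exactly the content of the regularity of $\mathrm{Diff}_{H^{\infty}}(\mathbb{R}^{d})$ as a Fr\'{e}chet--Lie group; without this, one would have to argue directly via the No--loss--no--gain Lemma (Lemma \ref{lem:noloss} and Remark \ref{rem:iterated_noloss}) to transfer the $H^{q}$-regularity across all $q$ and so recover an $H^{\infty}$-valued smooth curve, which would give an alternative route to the same conclusion.
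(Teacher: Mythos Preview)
Your argument is correct and is precisely the intended one: the paper states this corollary without proof, relying implicitly on the Arnold correspondence between geodesics and Euler solutions together with the Fr\'{e}chet--Lie group structure of $\DiffRd$, exactly as you spell out. The uniqueness direction via the Lagrangian flow and the alternative route through Lemma~\ref{lem:noloss} are both standard and in line with how the local analogue (Corollary~\ref{cor:Euler-well-posedness-Hinfty}) is obtained.
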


\begin{rem}
Note, that the results of this Section apply in particular to the $H^{s}$-metric for $s > 1 + d/2$.
\end{rem}

\begin{rem}
In contrast to the weak metric case, the non-vanishing of the geodesic distance is guaranteed for any strong Riemannian metric, c.f.~\cite{Lan1999}.
\end{rem}

\section{Conclusions and Outlook}
\label{sec:conclusion}

In this article we study right-invariant metrics induced by Fourier multipliers on the diffeomorphism group of $\Rd$. This class of metrics includes in particular the family of fractional order Sobolev type metrics. We prove that, under certain conditions on the Fourier multiplier, the metric extends to a smooth metric on the Sobolev completion $\DRd{q}$ (for sufficiently high $q$). Subsequently we use this result to prove local well-posedness of the corresponding Euler equations on $\DRd{q}$, using a method developed by Ebin and Marsden in~\cite{EM1970}. Observing that there is neither loss nor gain in regularity during the geodesic evolution we are able to transfer this result to the smooth category, i.e., we obtain a local well posedness result also on the Lie group $\DiffRd$.

For metrics of order $s>\frac d2+1$ we prove that they even induce a strong and smooth Riemannian metric on $\DRd{s}$. Combining the right--invariance of the metric with this result we obtain global well-posedness of the geodesic equation --
both in the smooth category and on $\DRd{s}$.

Although all of our results and proofs are formulated for the diffeomorphism group of $\Rd$ they directly translate to the diffeomorphism group of the $d$-dimensional torus. In the case
$d=1$ -- i.e., $\operatorname{Diff}(S^1)$ -- our results yield a combination of the results of~\cite{EK2014,EK2014a}.

In future work it would be interesting to generalize these results to fractional order metrics on diffeomorphism groups of general manifolds. The main obstacle towards such a result is to show that the metric extends smoothly to some Sobolev completion $\mathcal D^q(M)$. This result can be proven, for metrics that are induced by a differential operator. For more general metrics one would need to derive iterated commutator estimates for elliptic Pseudo-differential operators.

Another interesting research topic is the blow-up behaviour of the boundary case. It is well-known that the Camassa-Holm equation allows solutions to develop singularities in finite time~\cite{Con2001}.
This equation corresponds to the $H^1$-metric on $\operatorname{Diff}(S^1)$. It is proven that the geodesic equation is globally well-posed for metrics of order $s>\frac32$. The question of the occurrence of blow-up
along geodesics for metrics of order $s=\frac32$ remains open.

\appendix

\section{Translation invariant operators}
\label{sec:translation-invariant-operators}

Let $\mathcal{S}(\Rd,\Rd)$ denote the Fr\'{e}chet space of all rapidly decreasing smooth vector fields on $\Rd$. We define $\mathcal{O}_{M}$ as the space of all $\mathcal{L}(\CC^{d})$-valued slowly growing functions on $\Rd$, i.e. a smooth function $a\in C^\infty(\Rd,\mathcal{L}(\CC^{d}))$ belongs to $\mathcal{O}_{M}$ iff given any $\alpha\in\mathbb{N}^{d}$ there is $m_\alpha\in \mathbb{N}$ and $C_\alpha>0$ such that
\begin{equation*}
\Vert \partial ^\alpha a(\xi)\Vert_{\mathcal{L}(\CC^{d})}\le C_\alpha(1+\abs{\xi}^{2})^{m_{\alpha}/2},\quad \xi\in\Rd.
\end{equation*}

Given $(a,u)\in\mathcal{O}_{M}\times\mathcal{S}(\Rd,\Rd)$ it is well-known that $au\in\mathcal{S}(\Rd,\Rd)$. Thus we may define the \emph{Fourier multiplication operator} (or \emph{Fourier multiplier} for short) induced\footnote{Of course the Fourier transform in $\mathcal{S}(\Rd,\Rd)$ is defined componentwise.} by $a$ as
\begin{equation*}
  a(D)u:= \mathcal{F}^{-1}(a\,\hat u),\quad u\in \mathcal{S}(\Rd,\Rd).
\end{equation*}
Then, we have
\begin{equation*}
  a(D)\in\mathcal{L}(\mathcal{S}(\Rd,\Rd),\mathcal{S}(\Rd,\Rd))\cap\mathcal{L}(\mathcal{S}^{\prime}(\Rd,\Rd),\mathcal{S}^{\prime}(\Rd,\Rd)),
\end{equation*}
where $\mathcal{S}^{\prime}$ denotes the topological dual space of $\mathcal{S}$, i.e. the \emph{tempered $\Rd$-valued distributions on $\Rd$}. The convolution theorem for the Fourier transform implies that, given $u\in\mathcal{S}$, we have
\begin{equation}\label{eq:conv}
  a(D)u= (\mathcal{F}^{-1}a)\ast u\quad\text{in}\quad \mathcal{S}^{\prime}.
\end{equation}

\begin{rem}
We are rather interested in Fourier multipliers which extend to bounded operators on $L^{2}$ (and on normed subspaces of it) than on $\mathcal{S}$ or on $\mathcal{S}^{\prime}$. Therefore we use \eqref{eq:conv} to extend the admissible functions from $\mathcal{O}_{M}$ to $\mathcal{L}(\CC^{d})$-valued tempered distributions on $\Rd$, i.e. to
\begin{equation*}
  \mathcal{S}^{\prime}(\Rd,\mathcal{L}(\CC^{d})):=\mathcal{L}(\mathcal{S}(\Rd,\Rd),\mathcal{L}(\CC^{d})).
\end{equation*}
In fact, given $(a,u)\in \mathcal{S}^{\prime}(\Rd,\mathcal{L}(\CC^{d}))\times\mathcal{S}(\Rd,\Rd)$, it is well-known that the convolution $a\ast u$ is a well-defined element in $\mathcal{S}^{\prime}(\Rd,\Rd)$. Thus we may define
\begin{equation*}
  a(D)u:=\mathcal{F}^{-1}(a \hat u)=(\mathcal{F}^{-1}a)\ast u,\quad u\in \mathcal{S}(\Rd,\Rd),
\end{equation*}
by the convolution theorem. It is clear that
\begin{equation*}
  a(D)\,:\, \mathcal{S}(\Rd,\Rd)\to \mathcal{S}^{\prime}(\Rd,\Rd)
\end{equation*}
is a linear operator. Again we call $a(D)$ a \emph{Fourier multiplier} with symbol $a$.
\end{rem}

Obviously any Fourier multiplier is translation invariant. Conversely, a well-known result characterizes all translation invariant and \emph{bounded} operators on $L^{2}$ as Fourier multipliers with symbols in $L^{\infty}(\Rd,\mathcal{L}(\CC^{d}))$, cf.~\cite{Hoe1960,Gra2009}. We use this latter result to describe \emph{bounded translation invariant operators} on the Lie algebra $H^{\infty}(\Rd,\Rd)$.

\begin{lem}\label{lem:Fourier-multiplier}
Let $A$ be a \emph{continuous} linear operator on the Fr\'{e}chet space $H^\infty(\Rd,\Rd)$. Then the following three conditions are equivalent:
\begin{enumerate}
  \item $A$ commutes with any translation $\tau_{u}$, where $u\in \Rd$.
  \item $A$ commutes with $\nabla_{u}$ for each $u\in \Rd$ (constant vector field).
  \item There is an element $a\in \mathcal{S}^{\prime}(\Rd,L(\CC^{d}))$ such that $A = a(D)$.
\end{enumerate}
\end{lem}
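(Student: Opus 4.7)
The plan is to prove the chain $(3) \Rightarrow (1) \Rightarrow (2) \Rightarrow (1) \Rightarrow (3)$. The first three implications are essentially formal; the real content of the lemma lies in $(1) \Rightarrow (3)$.

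Implication $(3) \Rightarrow (1)$ follows from the identity $\widehat{\tau_u f}(\xi) = e^{-2i\pi\langle u,\xi\rangle}\hat{f}(\xi)$, which shows that $A = a(D)$ commutes with the scalar phase factor produced by translation. Implication $(1) \Rightarrow (2)$ is obtained by writing $\nabla_u$ as (minus) the derivative at $t=0$ of the one-parameter group $\{\tau_{tu}\}_{t\in\RR}$, the derivative being taken in the Fr\'echet topology of $H^\infty$; continuity of $A$ on $H^\infty$ then lets one pass the derivative through $A$. For the converse $(2) \Rightarrow (1)$, fix $u\in\Rd$ and $f\in H^\infty$; the curve $F(t) := \tau_{-tu}(A(\tau_{tu} f))$ is of class $C^1$ into $H^\infty$ and satisfies $F'(t) = \tau_{-tu}\bigl([A,\nabla_u]\tau_{tu}f\bigr) = 0$ by hypothesis, so $F$ is constant equal to $Af$, and $A\tau_{tu} = \tau_{tu} A$ for every $t$ and every $u$.

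The substantive step $(1) \Rightarrow (3)$ proceeds through the Schwartz kernel theorem. The continuous inclusions $\mathcal{S}(\Rd,\Rd) \hookrightarrow H^\infty(\Rd,\Rd) \hookrightarrow \mathcal{S}'(\Rd,\Rd)$ let us view the restriction of $A$ as a continuous linear map $\mathcal{S}(\Rd,\Rd) \to \mathcal{S}'(\Rd,\Rd)$. The matrix-valued version of Schwartz's kernel theorem then provides a unique distribution $K \in \mathcal{S}'(\Rd\times\Rd, L(\CC^d))$ representing $A|_{\mathcal{S}}$. Translation invariance of $A$ forces $K$ to be invariant under the diagonal translations $(x,y) \mapsto (x+u, y+u)$, and the standard structure theorem identifies such invariant distributions with pull-backs along the difference map: $K(x,y) = k(x-y)$ for a unique $k \in \mathcal{S}'(\Rd, L(\CC^d))$. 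Hence $Af = k \ast f$ on $\mathcal{S}$, and setting $a := \mathcal{F} k \in \mathcal{S}'(\Rd, L(\CC^d))$ yields $A = a(D)$ on $\mathcal{S}$ by the convolution theorem. Density of $\mathcal{S}$ in $H^\infty$, together with continuity of both $A$ and $a(D)$, extends the identity $A = a(D)$ to the whole Fr\'echet space $H^\infty(\Rd,\Rd)$.

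The main obstacle will be deploying the matrix-valued versions of the Schwartz kernel theorem and of the structure theorem for translation-invariant bivariate tempered distributions: they are routine consequences of their scalar counterparts but must be carefully set up componentwise (or via the natural tensor product $\mathcal{S}'(\Rd) \otimes L(\CC^d)$). A secondary technicality is the density of $\mathcal{S}(\Rd,\Rd)$ in the Fr\'echet space $H^\infty(\Rd,\Rd)$, which follows from density of $C_c^\infty(\Rd,\Rd)$ in each $H^q(\Rd,\Rd)$ (Lemma~\ref{lem:sobolev_embedding}) together with a standard diagonal argument.
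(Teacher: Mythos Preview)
Your argument is correct, but the route you take for $(1)\Rightarrow(3)$ differs substantially from the paper's. The paper does not invoke the Schwartz kernel theorem at all. Instead it exploits the Fr\'echet continuity of $A$ directly: since the seminorms $p_m(u)=\|(1-\Delta)^m u\|_{L^2}$ are ordered, continuity yields a single estimate $\|Au\|_{L^2}\le C\,p_{m_0}(u)$ for some fixed $m_0$. The operator $A\circ(1-\Delta)^{-m_0}$ is then bounded on $L^2$ and translation-invariant, so the classical H\"ormander characterisation produces an $L^\infty$ symbol $a_0$, and one recovers $A=a(D)$ with $a(\xi)=(1+|\xi|^2)^{m_0}a_0(\xi)$. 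This approach is lighter---no kernel theorem, no structure theorem for diagonally invariant bivariate distributions---and, more to the point, it yields the explicit form of the symbol as a polynomial weight times an $L^\infty$ function. The paper uses precisely this in the remark immediately following the lemma to conclude that every bounded translation-invariant operator on $H^\infty$ has finite order. Your kernel-theoretic argument only produces $a\in\mathcal{S}'$, so that structural conclusion would require an additional step. One small caveat: your closing claim that ``$a(D)$ is continuous'' on $H^\infty$ is not automatic for an arbitrary tempered symbol (the product $a\hat f$ need not make sense when $\hat f$ is merely in weighted $L^2$ spaces); in your setting this is harmless because the equality $A=a(D)$ on the dense subspace $\mathcal{S}$ already determines $A$ uniquely on $H^\infty$ by the continuity of $A$ alone, so the extension is tautological rather than a genuine extension of $a(D)$.
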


\begin{proof}
(i) The equivalence of (1) and (2) is easy to verify and since we already remarked that Fourier multipliers are translation invariant, it suffices to verify that (1) implies (3).

(ii) Assume that $A$ is linear and continuous on $H^\infty(\Rd,\Rd)$ which commutes with translations.  The topology of $H^\infty(\Rd,\Rd)$ is induced by the family of semi-norms
\begin{equation*}
  p_m(u) := \norm{(1-\Delta)^m u}_{L^{2}}, \qquad m\in\mathbb{N}.
\end{equation*}
Note that Plancherel's theorem yields $\Vert(1-\Delta)^{-l}\Vert_{\mathcal{L}(L^2,L^2)}\le 1$ for all $l\ge 0$. Thus the family $(p_m)_{m\in\mathbb{N}}$ is ordered. Consequently
there exists $m_0\in\mathbb{N}$ and $C>0$ such that
\begin{equation}\label{eq:stack}
  \norm{Au}_{L^{2}}\le C p_{m_0}(u),\qquad u\in H^\infty(\Rd,\Rd).
\end{equation}
Let
\begin{equation*}
  A_{m_0} := A\circ(1-\Delta)^{-m_0},
\end{equation*}
Then, \eqref{eq:stack} implies that $A_{m_0}$ maps the space $H^\infty(\Rd,\Rd)$ continuously into $L^{2}(\Rd,\Rd)$. Thus, there is a unique bounded extension $\tilde{A}_{m_0}$ of $A_{m_0}$
to $L^{2}(\Rd,\Rd)$. By construction $\tilde{A}_{m_0}$ is translation invariant. Thus by the classical $L^{2}$-result there is some $a_0\in L^{\infty}(\Rd,\mathcal{L}(\CC^{d}))$ such that
\begin{equation*}
  \tilde{A}_{m_0} = \mathcal{F}^{-1}a_0\mathcal{F}.
\end{equation*}
Given $u\in H^\infty(\Rd,\Rd)$, we have
\begin{equation*}
  Au=\tilde{A}_{m_0}\circ(1-\Delta)^{m_0}(u) = \mathcal{F}^{-1}(a_0(\xi)(1+\abs{\xi}^{2})^{m_0} \hat u),
\end{equation*}
meaning that $A$ is a Fourier multiplier with symbol $(1+\abs{\xi}^{2})^{m_0}a_0(\xi)$, which clearly belongs to $\mathcal{S}^{\prime}(\Rd,\mathcal{L}(\CC^{d}))$.
\end{proof}

Given $r \in \mathbb{R}$, a Fourier multiplier $a(D)$ with symbol $a$ belonging
to $L^1_{loc}(\Rd,\mathcal{L}(\CC^{d}))$ is said to be of class $M^{r}(\Rd)$ iff
\begin{equation*}
  \norm{a(\xi)}\lesssim \left( 1 + \abs{\xi}^{2}\right)^{r/2},\quad a.e.
\end{equation*}
In this case we call $r$ the \emph{order} of $a(D)$.

\begin{rem}
By Lemma~\ref{lem:Fourier-multiplier}, any bounded Fourier operator on $H^\infty(\Rd,\Rd)$ has a finite non-negative order. In particular there are no bounded Fourier multipliers of ``infinite order'' on $H^\infty(\Rd,\Rd)$.
\end{rem}

\section{Elliptic Fourier multipliers}
\label{sec:elliptic-Fourier-multipliers}

\begin{defn}
A Fourier multiplier $a(D)$ with symbol $a\in M^r(\Rd)$ is called \emph{elliptic} iff $a(\xi)\in\mathcal{GL}(\CC^{d})$ for almost all $\xi\in\Rd$ and
\begin{equation*}
\norm{[a(\xi)]^{-1}}\lesssim \left( 1 + \abs{\xi}^{2}\right)^{-r/2},\quad a.e.
\end{equation*}
\end{defn}

\begin{rems}\label{rem:FMell}
(a) If a bounded translation invariant operator $A$ on the space $H^\infty(\Rd,\Rd)$ extends to a \emph{bounded isomorphism} from $H^q(\Rd,\Rd)$ to $H^{q-r}(\Rd,\Rd)$ for some $r\ge 0$ and $q\ge r$ then its order is $r$ and it is elliptic. Indeed, given $r\ge 0$ and $q\ge r$ and invoking Lemma~\ref{lem:Fourier-multiplier}, we know that there is $r_0\in \NN$ and $a\in M^{r_0}(\Rd)$ such that $A=a(D)$. The fact that $A$ extends to a bounded isomorphism from $H^q(\Rd,\Rd)$ to $H^{q-r}(\Rd,\Rd)$ and a similar argument as in the second part of Lemma~\ref{lem:Fourier-multiplier} imply that $a$ belongs to $M^r(\Rd)$ and that $a(D)$ is elliptic.

(b) Given $r\in\mathbb{R}$, the operator
\begin{equation*}
  \mathrm{diag}[(1-\Delta)^{r/2},\dotsc,(1-\Delta)^{r/2}]
\end{equation*}
is elliptic. Note that the operator $-\Delta$ is not elliptic in the sense of the definition given above. We treat Fourier multipliers of this kind below with a slightly different notion of ellipticity.

(c) Let $f$ be a bounded and smooth function on $\mathbb{R}$ with
\begin{equation*}
  \liminf_{\xi \to -\infty} f(\xi)=0.
\end{equation*}
Given $r\ge 0$, consider $a(\xi):=f(\xi)(1+\xi^{2})^{r/2}$, $\xi\in\mathbb{R}$. Then $a$ belongs to $M^r(\mathbb{R})$ and it is invertible. But it is not elliptic.

(d) Note that we consider a quite simple class of elliptic systems. In fact there is are more elaborated notions of ellipticity for systems, e.g. in the sense of Douglis--Nirenberg. These more general constructions allow e.g. to treat operators of the form $\mathrm{diag}\,[1-\Delta,(1-\Delta)^{2}]$ on $C^\infty(\mathbb{R}^{2},\mathbb{R}^{2})$. To keep the presentation simple we do not expand this branch here.
\end{rems}

It is an easy consequence of Plancherel's theorem that Fourier multipliers are bounded on the corresponding Sobolev spaces into $L^{2}$. A corresponding result is true for the inverse of an elliptic Fourier multiplier.

\begin{prop}\label{prop:elliptic}
Let $A$ be a bounded translation invariant homomorphism on $H^\infty(\Rd,\Rd)$. Then, $A$ extends to a \emph{bounded isomorphism} from $H^q(\Rd,\Rd)$ onto $H^{q-r}(\Rd,\Rd)$ for some $r\ge 0$ and $q\ge r$ iff $A$ is an elliptic Fourier multiplier of order $r$, i.e. $A=a(D)$ with an elliptic symbol $a\in M^{r}(\Rd)$. In that case, $A$ extends to a bounded isomorphism from $H^q(\Rd,\Rd)$ onto $H^{q-r}(\Rd,\Rd)$ for any $r\ge 0$ and $q\ge r$.
\end{prop}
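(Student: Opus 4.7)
My plan is to translate the statement into the language of pointwise multiplication on weighted $L^{2}$ spaces via Plancherel, after which both directions become essentially a matter of book-keeping. Set $\mu_{s}(\xi) := (1+\abs{\xi}^{2})^{s/2}$, so that the Fourier transform implements, for every $p \in \RR$, an isometric isomorphism $\mathcal{F} : H^{p}(\Rd,\Rd) \to L^{2}(\Rd,\CC^{d};\, \mu_{p}^{2}\, d\xi)$.

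For the sufficiency direction, I would assume $A = a(D)$ with $a \in M^{r}(\Rd)$ elliptic. A direct Plancherel computation gives, for $u \in H^{\infty}(\Rd,\Rd)$,
\[
\norm{Au}_{H^{q-r}}^{2} = \int_{\Rd} \mu_{q-r}(\xi)^{2} \norm{a(\xi)\hat{u}(\xi)}^{2} \, d\xi \;\lesssim\; \int_{\Rd} \mu_{q}(\xi)^{2} \norm{\hat{u}(\xi)}^{2} \, d\xi = \norm{u}_{H^{q}}^{2},
\]
using the $M^{r}$-estimate $\norm{a(\xi)} \lesssim \mu_{r}(\xi)$. Hence $A$ extends boundedly from $H^{q}$ into $H^{q-r}$ for every $q \in \RR$. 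Ellipticity furnishes a pointwise almost-everywhere matrix inverse $a(\xi)^{-1}$ with $\norm{a(\xi)^{-1}} \lesssim \mu_{-r}(\xi)$, and the same Plancherel estimate applied to $a^{-1}(D)$ produces a bounded two-sided inverse $H^{q-r} \to H^{q}$. Since $q$ is arbitrary, this simultaneously takes care of the last sentence of the proposition.

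For the converse, Lemma~B.1 (more precisely, the explicit formula $a(\xi) = (1+\abs{\xi}^{2})^{m_{0}} a_{0}(\xi)$ extracted from its proof) supplies $A = a(D)$ with $a$ a locally bounded matrix-valued function. Transporting everything through the Plancherel isometry, $A$ becomes multiplication by $a$ between the weighted spaces $L^{2}_{\mu_{q}^{2}}$ and $L^{2}_{\mu_{q-r}^{2}}$, and conjugating by the weight isometries $f \mapsto \mu_{s} f$ turns it into the multiplication operator $M_{b}$ of symbol $b(\xi) := \mu_{-r}(\xi) a(\xi)$, which is a bounded isomorphism of unweighted $L^{2}(\Rd,\CC^{d})$. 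Boundedness of a matrix-valued multiplier on $L^{2}$ is equivalent to $b \in L^{\infty}$, whence $\norm{a(\xi)} \lesssim \mu_{r}(\xi)$ almost everywhere and $a \in M^{r}(\Rd)$.

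The heart of the argument is extracting ellipticity from the invertibility of $M_{b}$ on $L^{2}$, which I expect to be the main obstacle. The open mapping theorem supplies a coercivity bound $\norm{M_{b} f}_{L^{2}} \ge c \norm{f}_{L^{2}}$; specializing $f$ to a sequence of test vectors concentrated in shrinking balls around an arbitrary Lebesgue point $\xi_{0}$ of $b$ and pointing in a fixed direction $v \in \CC^{d}$ yields the pointwise lower bound $\norm{b(\xi_{0}) v} \ge c \norm{v}$. Running this argument along a countable dense subset of $\CC^{d}$ and using continuity in $v$ on a common full-measure set upgrades the estimate to all $v \in \CC^{d}$; finite-dimensionality of $\CC^{d}$ then forces $b(\xi)$ to be invertible for almost every $\xi$ with $\norm{b(\xi)^{-1}} \le 1/c$, which translates through the factor $\mu_{-r}$ to $\norm{a(\xi)^{-1}} \lesssim \mu_{-r}(\xi)$---exactly the required ellipticity bound. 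Once ellipticity is established, the sufficiency direction reapplied at arbitrary $q \ge r$ (in fact any $q \in \RR$) yields the final clause of the proposition.
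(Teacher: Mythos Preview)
Your argument is correct and follows the same path the paper sketches (the paper does not give a full proof, but refers back to Remark~\ref{rem:FMell}(a), which in turn says ``a similar argument as in the second part of Lemma~\ref{lem:Fourier-multiplier}''): conjugate by the weights $\mu_{q}$, $\mu_{q-r}$ to reduce to a bounded multiplication isomorphism $M_{b}$ on unweighted $L^{2}(\Rd,\CC^{d})$ with $b=\mu_{-r}a$, and read off $b,b^{-1}\in L^{\infty}$. The only difference is cosmetic: where the paper would simply apply the classical $L^{2}$-multiplier characterization a second time to the bounded inverse $M_{b}^{-1}$ (which is again translation-invariant, hence $M_{b}^{-1}=M_{c}$ with $c\in L^{\infty}$, forcing $c=b^{-1}$ a.e.), you instead localize the open-mapping coercivity bound to Lebesgue points of $b$---a slightly longer but perfectly valid and more self-contained route to the same conclusion.
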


We next specify conditions on the symbol, guaranteeing that the corresponding Fourier multiplier is elliptic.

\begin{rems}
(a) Let $r\ge 0$ be given, and assume that $a_\pi\in L^\infty(\Rd,\mathcal{L}(\CC^{d}))$ is (positively) \emph{homogeneous of degree $r$}, i.e.
\begin{equation*}
  a_\pi(\lambda \xi)=\lambda^r\,a_\pi(\xi)\quad\text{in}\quad\mathcal{GL}(\CC^{d})
\end{equation*}
for all $\lambda\ge 0$ and almost all $\xi\in\Rd$. Clearly we have $a_\pi\in M^r(\Rd)$ in this situation. Note also that
\begin{equation}\label{eq:homogen}
  \lambda + a_\pi(\xi)=(\lambda^{2/r}+\abs{\xi}^{2})^{r/2}(\lambda_0+a_\pi(\xi_0)),\quad (\lambda, \xi)\in (0,\infty)\times\Rd,
\end{equation}
where
\begin{equation*}
  \lambda_0 := \frac{\lambda}{(\lambda^{2/r}+\abs{\xi}^{2})^{r/2}},\quad \xi_0 := \frac{\xi}{(\lambda^{2/r}+\abs{\xi}^{2})^{1/2}}.
\end{equation*}
Obviously we have that $\lambda_0^{2/r}+\vert\xi_0\vert^{2}=1$.

(b) Following~\cite{Ama1990a}, we call a homogeneous symbol $a_\pi$ \emph{normally elliptic} iff given $\xi\in\mathbb{S}^{d-1}$, all eigenvalues of $a_\pi(\xi)$ have positive real parts. In view of \eqref{eq:homogen}, normal ellipticity of $a_\pi$ implies that, given $(\lambda,\xi)\in (0,\infty)\times \Rd$, we have that $\lambda+a_\pi(\xi)$ and $\lambda_0+a_\pi(\xi_0)$ are invertible and  - recalling the homogeneity of $a_\pi$ - that
\begin{equation*}
  M := \sup_{(\mu,\eta)\in K_d} \norm{[\mu + a_\pi(\eta)]^{-1}}_{L(\CC^{d})},
\end{equation*}
is finite, where
\begin{equation*}
  K_{d} := \set{(\mu,\eta)\in (0,\infty)\times\Rd\,;\, \mu^{2/r}+\vert\eta\vert^{2}=1}.
\end{equation*}
Hence we get
\begin{equation}\label{eq:symbol-est}
 \norm{[\lambda + a_\pi(\xi)]^{-1}}_{L(\CC^{d})} \le M(\lambda^{2/r}+\abs{\xi}^{2})^{-r/2},
\end{equation}
provided $a_\pi$ is normally elliptic.
\end{rems}

We summarize the above considerations by noting the following result.

\begin{prop}\label{prop:pos.elliptic}
Let $\lambda>0$ and assume that $a_\pi\in M^r(\Rd)$ is a homogeneous symbol of degree $r$, which is normally elliptic. Then the corresponding Fourier multiplier $\lambda + a_\pi(D)$ is elliptic.
\end{prop}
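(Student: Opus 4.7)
The plan is to combine the pointwise bound~\eqref{eq:symbol-est} --- essentially already established in the preceding remarks --- with the two defining conditions of the elliptic class. Thus my proof will really be a translation exercise: I need to (a) verify that $\lambda I + a_\pi(\xi)$ lies in $M^r(\Rd)$, (b) check pointwise (a.e.) invertibility, (c) extract the inverse bound, and (d) convert the weight that appears naturally to the one demanded by the definition.

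First I would settle (a) and (b) directly. For the symbol bound, since $r \ge 0$ and $a_\pi \in M^r(\Rd)$, the triangle inequality gives $\norm{\lambda I + a_\pi(\xi)} \le \lambda + \norm{a_\pi(\xi)} \lesssim (1+\abs{\xi}^{2})^{r/2}$, so $\lambda + a_\pi(D)$ is a Fourier multiplier in $M^r(\Rd)$. For invertibility, homogeneity of $a_\pi$ of degree $r$ implies that for $\xi \neq 0$ each eigenvalue of $a_\pi(\xi) = \abs{\xi}^{r}a_\pi(\xi/\abs{\xi})$ has non-negative real part by normal ellipticity; adding $\lambda > 0$ shifts all eigenvalues strictly into the open right half-plane, and at $\xi = 0$ the operator is simply $\lambda I$. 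Hence $\lambda I + a_\pi(\xi) \in \mathcal{GL}(\CC^{d})$ for every (not just almost every) $\xi$.

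For (c), I would apply the decomposition~\eqref{eq:homogen} to obtain
\begin{equation*}
[\lambda + a_\pi(\xi)]^{-1} = (\lambda^{2/r}+\abs{\xi}^{2})^{-r/2}\,[\lambda_0 + a_\pi(\xi_0)]^{-1},
\end{equation*}
with $(\lambda_0,\xi_0)\in K_d$, and invoke the finiteness of the supremum $M$ sketched in the remark to deduce the bound~\eqref{eq:symbol-est}. For (d), to move from the weight $(\lambda^{2/r}+\abs{\xi}^{2})^{-r/2}$ to the standard weight $(1+\abs{\xi}^{2})^{-r/2}$ demanded by the ellipticity definition, I would use the elementary inequality $1 + \abs{\xi}^{2} \le \max(1,\lambda^{-2/r})\,(\lambda^{2/r}+\abs{\xi}^{2})$, which costs only a $\lambda$-dependent multiplicative constant. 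Combining yields $\norm{[\lambda + a_\pi(\xi)]^{-1}} \lesssim (1+\abs{\xi}^{2})^{-r/2}$, finishing the proof.

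The main obstacle is genuinely slight: everything of substance has been done in the preceding remarks, and the only point requiring care is justifying $M < \infty$. This rests on the compactness of the closure $\set{(\mu,\eta) : \mu^{2/r} + \abs{\eta}^{2} = 1,\, \mu \ge 0}$ together with continuity of $(\mu,\eta)\mapsto \norm{[\mu + a_\pi(\eta)]^{-1}}$; the latter uses mild regularity of $a_\pi$ on $\RR^{d}\setminus\set{0}$ (implicit in the eigenvalue formulation of normal ellipticity). Once $M < \infty$ is granted, the proposition follows as a bookkeeping step.
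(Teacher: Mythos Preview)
Your proposal is correct and follows exactly the paper's approach: the paper does not give a separate proof but presents the proposition explicitly as a summary of the preceding remarks, where the key estimate~\eqref{eq:symbol-est} and the finiteness of $M$ are already established. Your steps (a)--(d) merely spell out the bookkeeping the paper leaves implicit, including the harmless weight conversion from $(\lambda^{2/r}+\abs{\xi}^{2})^{-r/2}$ to $(1+\abs{\xi}^{2})^{-r/2}$.
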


The following result is a further consequence of \eqref{eq:symbol-est}.

\begin{cor}\label{cor:para-est}
Let $q\in \mathbb{R}$ and $\lambda \ge 1$ be given. Assume further that $a_\pi\in M^r(\Rd)$ is a homogeneous symbol  of degree $r$, which is normally elliptic. Then there is a $C_\ast>0$ such that
\begin{equation*}
 \lambda \norm{u}_{H^{q-r}} + \norm{u}_{H^q} \le C_\ast \norm{(\lambda + a_\pi(D))u}_{H^{q-r}}, \quad u\in H^q(\Rd,\Rd).
\end{equation*}
The constant $C_\ast$ can be chosen independently of $q$ and $\lambda\ge 1$.
\end{cor}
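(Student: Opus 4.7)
The plan is to reduce the corollary to pointwise bounds on the Fourier side via Plancherel's identity, using the inverse-symbol estimate~\eqref{eq:symbol-est} supplied by Proposition~\ref{prop:pos.elliptic}. Setting $L := \lambda + a_\pi(D)$, I would first write, on the Fourier side,
\[
\hat u(\xi) = [\lambda + a_\pi(\xi)]^{-1}\, \widehat{Lu}(\xi),
\]
so that \eqref{eq:symbol-est} yields the pointwise majorization
\[
\abs{\hat u(\xi)} \le M\,(\lambda^{2/r}+\abs{\xi}^{2})^{-r/2}\,\abs{\widehat{Lu}(\xi)}, \qquad \text{a.e. } \xi \in \Rd,
\]
with $M$ depending only on $a_\pi$.

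Since $\norm{v}_{H^{p}}^{2} = \int_{\Rd}(1+\abs{\xi}^{2})^{p}\abs{\hat v(\xi)}^{2}\,d\xi$, the statement reduces to verifying the two uniform pointwise inequalities
\[
\lambda\,(1+\abs{\xi}^{2})^{(q-r)/2}(\lambda^{2/r}+\abs{\xi}^{2})^{-r/2} \le (1+\abs{\xi}^{2})^{(q-r)/2},
\]
and
\[
(1+\abs{\xi}^{2})^{q/2}(\lambda^{2/r}+\abs{\xi}^{2})^{-r/2} \le (1+\abs{\xi}^{2})^{(q-r)/2},
\]
with implicit constants independent of $\xi\in\Rd$, $q\in\RR$, and $\lambda\ge 1$. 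For the first I would simply drop the $\abs{\xi}^{2}$-term in the denominator to obtain $\lambda(\lambda^{2/r}+\abs{\xi}^{2})^{-r/2}\le \lambda\cdot\lambda^{-1}=1$. For the second, the hypothesis $\lambda\ge 1$ together with $r > 0$ gives $\lambda^{2/r}\ge 1$, hence $1+\abs{\xi}^{2}\le \lambda^{2/r}+\abs{\xi}^{2}$, so that $(1+\abs{\xi}^{2})^{r/2}\le (\lambda^{2/r}+\abs{\xi}^{2})^{r/2}$, and the estimate follows.

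Multiplying by $\abs{\widehat{Lu}(\xi)}^{2}$, integrating over $\Rd$ and taking square roots produces
\[
\lambda\norm{u}_{H^{q-r}} \le M\norm{Lu}_{H^{q-r}}, \qquad \norm{u}_{H^{q}} \le M\norm{Lu}_{H^{q-r}},
\]
and summing yields the conclusion with $C_\ast := 2M$. The constant is independent of $q$ and of $\lambda\ge 1$ because the common factor $(1+\abs{\xi}^{2})^{q/2}$ drops out on both sides and $M$ is extracted solely from \eqref{eq:symbol-est}, which does not involve $q$ or $\lambda$. I do not anticipate any real obstacle here: the entire argument is a Plancherel reduction combined with two elementary scaling inequalities, the only genuine analytic input being the already-established bound~\eqref{eq:symbol-est}.
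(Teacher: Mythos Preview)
Your argument is correct and is exactly the approach the paper has in mind: the corollary is stated there without proof as ``a further consequence of~\eqref{eq:symbol-est}'', and what you have written is precisely the Plancherel reduction and the two elementary scalar inequalities needed to pass from the pointwise symbol bound~\eqref{eq:symbol-est} to the Sobolev-norm estimate. The only tacit hypothesis you invoke is $r>0$ (needed so that $\lambda^{2/r}\ge 1$ and so that raising to the power $r/2$ preserves the inequality), which is indeed the standing assumption in the paper's discussion of homogeneous normally elliptic symbols.
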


A Fourier multiplier $a(D)$ is said to be \emph{classical} if there is a homogeneous symbol $a_\pi$ of degree $r$ and a $r_0 < r$ such that $a-a_\pi\in M^{r_0}(\Rd)$. In this situation we call $a_\pi$ the \emph{principal symbol} of $a$. A classical Fourier multiplier is said to be \emph{normally elliptic}, iff its principal symbol is normally elliptic.

\begin{prop}\label{prop:ellip-classic}
Let $a(D)$ be a normally elliptic classical Fourier multiplier. Then there is a $\lambda_\ast>0$ such that $\lambda + a(D)$ is elliptic for any $\lambda\ge\lambda_\ast$.
\end{prop}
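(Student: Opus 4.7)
The plan is to treat $a(D)$ as a lower-order perturbation of its principal part $a_\pi(D)$ and to apply Proposition~\ref{prop:pos.elliptic} together with the a priori estimate~\eqref{eq:symbol-est}. Write $a = a_\pi + b$ with $b := a - a_\pi \in M^{r_0}(\Rd)$, where $r_0 < r$, and factor
\begin{equation*}
\lambda + a(\xi) = \bigl[\lambda + a_\pi(\xi)\bigr] \bigl( I + [\lambda + a_\pi(\xi)]^{-1} b(\xi) \bigr), \qquad \lambda > 0, \quad \xi \in \Rd,
\end{equation*}
which is legitimate because Proposition~\ref{prop:pos.elliptic} already guarantees that $\lambda + a_\pi(\xi)$ is invertible for every $\lambda > 0$.

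The main step, and the one I expect to carry most of the work, is to show that the factor in parentheses is uniformly close to the identity in $\mathcal{L}(\CC^{d})$ once $\lambda$ is large. Combining~\eqref{eq:symbol-est} with the symbol estimate $\norm{b(\xi)} \lesssim (1+\abs{\xi}^{2})^{r_0/2}$ yields
\begin{equation*}
\norm{[\lambda + a_\pi(\xi)]^{-1} b(\xi)} \lesssim (\lambda^{2/r} + \abs{\xi}^{2})^{-r/2} (1 + \abs{\xi}^{2})^{r_0/2}.
\end{equation*}
For $\lambda \ge 1$ one has $1 + \abs{\xi}^{2} \le \lambda^{2/r} + \abs{\xi}^{2}$, so a short case split on the sign of $r_0$ reduces the right hand side to at most $C \lambda^{-(r-r_0)/r}$ when $r_0 \ge 0$ and to $C \lambda^{-1}$ when $r_0 < 0$; in either case it tends to zero as $\lambda \to \infty$, uniformly in $\xi$.

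Fixing $\lambda_\ast \ge 1$ so large that the above operator norm is at most $1/2$ for every $\lambda \ge \lambda_\ast$, a Neumann series argument shows that $I + [\lambda + a_\pi(\xi)]^{-1} b(\xi)$ is invertible in $\mathcal{L}(\CC^{d})$ with inverse of norm at most $2$. Consequently $\lambda + a(\xi) \in \mathcal{GL}(\CC^{d})$ and
\begin{equation*}
\norm{[\lambda + a(\xi)]^{-1}} \le 2M (\lambda^{2/r} + \abs{\xi}^{2})^{-r/2} \le 2M (1 + \abs{\xi}^{2})^{-r/2}.
\end{equation*}
Together with the straightforward observation that $\lambda + a \in M^{r}(\Rd)$, this yields both conditions in the definition of ellipticity. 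I expect that everything except the uniform-in-$\xi$ smallness estimate is essentially bookkeeping once the normal-ellipticity bound~\eqref{eq:symbol-est} is in hand.
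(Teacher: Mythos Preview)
Your argument is correct but takes a different route from the paper. The paper works at the operator level: it proves the coercive estimate $\norm{(\lambda + a(D))u}_{H^{q-r}} \ge c_\ast \norm{u}_{H^q}$ by invoking Corollary~\ref{cor:para-est} for the principal part $\lambda + a_\pi(D)$, absorbing the lower-order remainder $a_0(D) = a(D) - a_\pi(D)$ via interpolation between $H^{q-r}$ and $H^{q}$ together with the weighted Young inequality, and then applying the method of continuity (Theorem~5.2 in~\cite{GT1998}) along the homotopy $\lambda + a_\pi(D) + t\,a_0(D)$, $t\in[0,1]$. You instead stay entirely at the symbol level, factoring $\lambda + a(\xi)$ through $\lambda + a_\pi(\xi)$ and using~\eqref{eq:symbol-est} directly to run a Neumann-series perturbation. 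Your approach is more elementary and arguably better matched to the problem, since ellipticity is \emph{defined} by a pointwise symbol bound and your argument never leaves that setting; it also produces explicit constants. The paper's operator-theoretic proof, on the other hand, is a template that transfers more readily to settings (variable coefficients, manifolds) where one does not have a clean symbolic calculus, which is presumably why the authors chose it.
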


\begin{proof}
(a) Let $a(D)$ be a classical Fourier multiplier and denote by $a_\pi\in M^r(\Rd)$ its principle symbol. It suffices to show that, given $q\in\mathbb{R}$, there is a $\lambda_\ast>0$ and a $c_\ast>0$ such that
\begin{equation}\label{eq:coerc-classic}
  \norm{(\lambda + a(D))u}_{H^{q-r}} \ge c_\ast \norm{u}_{H^q},\quad u\in H^q(\Rd,\Rd),
\end{equation}
provided $\lambda\ge\lambda_\ast$. To do so, we shall apply the method of continuity, c.f. Theorem 5.2 in~\cite{GT1998}.

(b) By assumption there is a $r_0 < r$ such that $a_0:=a-a_\pi$ belongs to $S^{r_0}$. Thus there is a $C_0>0$ such that
\begin{equation}\label{lo-est}
  \norm{a_0(D)u}_{H^{q-r}} \le C_0 \norm{u}_{H^{q-(r-r_0)}}.
\end{equation}
Let $C_\ast>0$ be the constant appearing in Corollary \ref{cor:para-est}. Note that
\begin{equation*}
  q-r<q-(r-r_0)<q.
\end{equation*}
Thus, by interpolation and the weighted Young inequality there is a $C_{1}>0$ such that
\begin{equation}\label{lo-est1}
 C_0\norm{u}_{H^{q-(r-r_0)}} \le \frac{1}{2C_\ast} \norm{u}_{H^q} + C_{1} \norm{u}_{H^{q-r}}.
\end{equation}

(c) By Corollary~\ref{cor:para-est} there is a $C_\ast$ such that
\begin{equation}\label{eq:est-pric-part}
  \lambda \norm{u}_{H^{q-r}} + \norm{u}_{H^q}\le C_\ast \norm{(\lambda + a_\pi(D))u}_{H^{q-r}},\quad u\in H^q(\Rd,\Rd),
\end{equation}
for all $\lambda\ge 1$.
Combining \eqref{lo-est}-\eqref{eq:est-pric-part}, we find
\begin{equation*}
  \norm{(\lambda +a_\pi(D) + t a_0(D))u}_{H^{q-r}}\ge \frac{\lambda \norm{u}_{H^{q-r}}}{C_\ast} + \frac{\norm{u}_{H^q}}{C_\ast} - \frac{\norm{u}_{H^q}}{2C_\ast} - C_{1}\norm{u}_{H^{q-r}}
\end{equation*}
for all $t\in [0,1]$, $\lambda\ge 1$,  and $u\in H^q(\Rd,\Rd)$. Choosing
$\lambda_\ast = \max\{C_\ast C_{1},1\}$ and $c_\ast=1/2C_\ast$ we get \eqref{eq:coerc-classic} from Theorem 5.2 in~\cite{GT1998}.
\end{proof}

Following~\cite{Bro1961}, a classical Fourier multiplier $a\in M^r(\Rd)$ is said to be \emph{strongly elliptic} iff there is an $\alpha>0$ such that
\begin{equation*}
  \mathrm{Re}\, (a_\pi(\xi)\eta\cdot\eta)\ge\alpha\abs{\xi}^{r}\vert\eta\vert^{2},\quad (\xi,\eta)\in \Rd\times \mathbb{C}^{d}.
\end{equation*}

\begin{rems}\label{rem:ell}
(a) Let $\lambda\in \mathbb{C}$ and $(\xi,\eta)\in\Rd\times\mathbb{C}^{d}$ with $\text{Re}\,\lambda\le 0$ and $\abs{\xi}=\vert\eta\vert=1$ be given. Then
\begin{equation*}
  \abs{[\lambda - a_\pi(\xi)]\eta} \ge \mathrm{Re}\,([-\lambda + a_\pi(\xi)]\eta\cdot\eta)\ge -\mathrm{Re}\,\lambda + \alpha > 0,
\end{equation*}
showing that all eigenvalues of the principal symbol $a_\pi$ do have a positive real part. This proves that any strongly elliptic Fourier multiplier is normally elliptic.

(b) The converse of the above remark is not true. To see this, consider
\begin{equation*}
  a_t(D):=
    \begin{bmatrix}
      -\Delta & -t\Delta\\
      0 & -\Delta\\
    \end{bmatrix}\,
\end{equation*}
where $t\in \mathbb{R}$ is a free parameter. Then, $a_t(D)$ is normally elliptic for any choice of $t$, but it is only strongly elliptic if $\abs{t} < 2$.

(c) A paradigmatic class of Fourier multipliers which fit into the above described framework are differential operators on $\Rd$ of even order\footnote{Is is known that normally elliptic differential operators are automatically of even order, cf.~\cite{Ama1990a}.} and with constant coefficients. To be more specific, let $k\in\NN$ be given, and choose coefficients $a_\alpha\in \mathcal{L}(\CC^{d})$, where $\alpha\in \NN^{d}$ with $\abs{\alpha} \le 2k$. Consider the differential operator
\begin{equation*}
  A:=\sum_{\vert\alpha\vert \le 2k}a_\alpha(-i\partial)^\alpha.
\end{equation*}
Then $A=a(D)$, where $a(\xi):=\sum_{\vert\alpha\vert \le 2k}a_\alpha\xi^\alpha$ denotes its symbol. It is clear that $a$ belongs to $M^{2k}$ and that it is classical in the above sense. Thus the above results are applicable to $A$.
\end{rems}

\section{Derivatives of the conjugate of a Fourier multiplier}
\label{sec:Fourier-multipliers-derivatives}

Let $A=a(D)$ be a Fourier multiplier of class $\mathcal{S}^{r}$ and
\begin{equation*}
 A_{\varphi} := R_{\varphi^{-1}}A R_{\varphi},
\end{equation*}
where $\varphi \in \DiffRd$. It was shown in~\cite[Lemma 3.2]{EK2014} that
\begin{equation*}
    \partial^{n}_{\varphi}A_{\varphi} (v,\delta\varphi_{1}, \dotsc ,\delta\varphi_{n}) = R_{\varphi}A_{n}R_{\varphi}^{-1}(v,\delta\varphi_{1}, \dotsc ,\delta\varphi_{n}),
\end{equation*}
where
\begin{equation*}
  A_{n}:= \partial^{n}_{\id}A_{\varphi} \in \mathcal{L}^{n+1}(\HRd{\infty},\HRd{\infty})
\end{equation*}
is the $(n+1)$-linear operator defined inductively by $A_{0} = A$ and
\begin{multline}\label{eq:recurrence_relation}
    A_{n+1}(u_{0},u_{1}, \dotsc , u_{n+1}) = \nabla_{u_{n+1}} \left(A_{n}(u_{0}, u_{1}, \dotsc , u_{n}) \right)\\
    - \sum_{k=0}^{n} A_{n}(u_{0}, u_{1}, \dotsc ,\nabla_{u_{n+1}} u_{k}, \dotsc , u_{n}),
\end{multline}
where $\nabla$ is the canonical derivative on $\Rd$.

\begin{lem}\label{lem:multi-Fourier-multipliers}
Let $u_{0},u_{1}, \dotsc , u_{n} \in \HRd{\infty}$. Then, the Fourier transform of $A_{n}(u_{0},u_{1}, \dotsc , u_{n})$,
noted for short $\widehat{A_{n}}$, can be written as
\begin{equation}\label{eq:multi-Fourier-multipliers}
  \widehat{A_{n}}(\xi) = \int_{\xi_{0} + \dotsb + \xi_{n} = \xi} a_{n}(\xi_{0}, \dotsc ,\xi_{n}) \left[\hat{u}_{0}(\xi_{0}), \dotsc ,\hat{u}_{n}(\xi_{n}) \right]\, d\mu
\end{equation}
where $d\mu$ is the Lebesgue measure on the subspace $\xi_{0} + \dotsb + \xi_{n} = \xi$ of $(\RR^{d})^{n+1}$ and
\begin{equation*}
  a_{n} : (\RR^{d})^{n+1} \to \mathcal{L}^{n+1}(\CC^{d},\CC^{d})
\end{equation*}
is the $(n+1)$-linear map defined inductively by $a_{0} = a$ and

\begin{multline}\label{eq:multi-Fourier-symbols}
  a_{n+1}(\xi_{0}, \dotsc ,\xi_{n+1}) = \\
  (2i\pi) \sum_{k=0}^{n} \Big[ a_{n}(\xi_{0}, \dotsc ,\xi_{k} + \xi_{n+1},\dotsc ,\xi_{n}) -  a_{n}(\xi_{0}, \dotsc ,\xi_{n}) \Big] \otimes \xi_{k}^{\sharp} ,
\end{multline}
where $\xi^{\sharp}$ is the linear functional defined by $\xi^{\sharp}(X) := \xi \cdot X$.
\end{lem}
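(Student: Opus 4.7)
The plan is induction on $n$, driven by the recurrence~\eqref{eq:recurrence_relation}. The base case $n=0$ is immediate: by definition of the Fourier multiplier $A=a(D)$, we have $\widehat{Au_{0}}(\xi) = a(\xi)\hat{u}_{0}(\xi)$, which matches~\eqref{eq:multi-Fourier-multipliers} with $a_{0}=a$ and the constraint $\xi_{0}=\xi$ collapsing the integral to a point.

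For the inductive step, the key computational ingredient is the identity
\begin{equation*}
\widehat{\nabla_{v} w}(\xi) = 2i\pi \int_{\Rd} (\xi-\eta)^{\sharp}(\hat{v}(\eta))\, \hat{w}(\xi-\eta)\, d\eta,
\end{equation*}
valid for $v,w \in \HRd{\infty}$, which is an immediate consequence of the convolution theorem combined with the rule $\mathcal{F}(\partial_{i} f)(\xi) = 2i\pi \xi_{i} \hat{f}(\xi)$. Applying this with $w = A_{n}(u_{0},\ldots,u_{n})$ and $v = u_{n+1}$, then substituting the inductive expression for $\hat{w}$ and renaming $\xi_{n+1} := \eta$, converts the first term $\nabla_{u_{n+1}}(A_{n}(u_{0},\ldots,u_{n}))$ of~\eqref{eq:recurrence_relation} into an integral over the hyperplane $\xi_{0}+\cdots+\xi_{n+1}=\xi$. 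Since on this hyperplane $\xi - \xi_{n+1} = \xi_{0}+\cdots+\xi_{n}$, the scalar factor splits linearly as $(\xi-\xi_{n+1})^{\sharp} = \sum_{k=0}^{n} \xi_{k}^{\sharp}$, producing precisely the sum over $k$ that appears in~\eqref{eq:multi-Fourier-symbols}.

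The second term is processed summand by summand. For each $k$, I insert the above formula for $\widehat{\nabla_{u_{n+1}} u_{k}}(\eta_{k})$ into the $k$-th slot of the inductive representation of $A_{n}(u_{0},\ldots,\nabla_{u_{n+1}}u_{k},\ldots,u_{n})$, and then perform the unit-Jacobian change of variables $\xi_{k} := \eta_{k}-\zeta$, $\xi_{n+1} := \zeta$, $\xi_{j} := \eta_{j}$ for $j\neq k$. This carries Lebesgue measure on the hyperplane $\eta_{0}+\cdots+\eta_{n}=\xi$ to Lebesgue measure on $\xi_{0}+\cdots+\xi_{n+1}=\xi$, leaves the scalar factor as $\xi_{k}^{\sharp}(\hat{u}_{n+1}(\xi_{n+1}))$, and shifts the argument of $a_{n}$ in its $k$-th slot from $\eta_{k}$ to $\xi_{k}+\xi_{n+1}$. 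Summing the two contributions of~\eqref{eq:recurrence_relation} and collecting the matching scalars $\xi_{k}^{\sharp}(\hat{u}_{n+1}(\xi_{n+1}))$ produces, for each $k$, the telescoping difference $a_{n}(\xi_{0},\ldots,\xi_{k}+\xi_{n+1},\ldots,\xi_{n}) - a_{n}(\xi_{0},\ldots,\xi_{n})$, which is exactly the combinatorial structure of~\eqref{eq:multi-Fourier-symbols}.

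The main obstacle is purely bookkeeping: keeping track of which slot of the multilinear symbol $a_{n}$ gets shifted, verifying that the disintegration of Lebesgue measure is preserved under the change of variables, and matching each $\xi_{k}^{\sharp}$ to the right insertion. No hard analysis intervenes; the identity is essentially symbolic in Fourier space, and all interchanges of integration are justified \emph{a posteriori} by the $\HRd{\infty}$ hypothesis on the $u_{k}$, which makes every integrand rapidly decreasing.
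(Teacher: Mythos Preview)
Your argument is correct and is essentially identical to the paper's own proof: both proceed by induction on $n$, invoke the convolution identity for $\widehat{\nabla_{v}w}$, treat the two terms of the recurrence~\eqref{eq:recurrence_relation} separately via Fubini and a linear change of variables on the hyperplane, and then match the resulting coefficients to~\eqref{eq:multi-Fourier-symbols}. The only cosmetic difference is that the paper anchors the induction at $n=1$ rather than $n=0$; be aware, however, that your sign in the formula for $\widehat{\nabla_{v}w}$ differs from the paper's displayed version~\eqref{eq:covariant-derivative-FT}, so when you write this up in full you should track the signs carefully to land exactly on~\eqref{eq:multi-Fourier-symbols}.
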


Before giving the proof of this result, we would like to point out that
\begin{equation}\label{eq:covariant-derivative-FT}
  \widehat{\nabla_{u}w}(\xi) = -2i\pi \int_{\xi_{1}+\xi_{2} = \xi} \big(\hat{u}(\xi_{1}) \cdot \xi_{2}\big)\hat{w}(\xi_{2})\, d\mu,
\end{equation}
for all $u,v \in\HRd{\infty}$.

\begin{proof}[Proof of Lemma~\ref{lem:multi-Fourier-multipliers}]
The proof is achieved by induction on $n$. For $n=1$, using~\eqref{eq:recurrence_relation} and~\eqref{eq:covariant-derivative-FT}, we get
\begin{multline*}
  \widehat{A_{1}}(\xi) = -2i\pi\int_{\xi_{0} + \xi_{1} = \xi} \big(\hat{u}_{1}(\xi_{1}) \cdot \xi_{0}\big)a(\xi_{0})[\hat{u}_{0}(\xi_{0})]\, d\mu \\
  + 2i\pi\int_{\xi_{0} + \xi_{1} = \xi} \big(\hat{u}_{1}(\xi_{1}) \cdot \xi_{0}\big)a(\xi)[\hat{u}_{0}(\xi_{0})]\, d\mu.
\end{multline*}
Therefore, \eqref{eq:multi-Fourier-multipliers} is true for $n=1$ with
\begin{equation*}
  a_{1}(\xi_{0},\xi_{1}) = 2i\pi \Big(a(\xi_{0}+\xi_{1}) - a(\xi_{0})\Big) \otimes \xi_{0}^{\sharp}.
\end{equation*}
Suppose now that~\eqref{eq:multi-Fourier-multipliers} is true for $n$. Using again~\eqref{eq:recurrence_relation} and~\eqref{eq:covariant-derivative-FT}, we get
\begin{multline*}
  \widehat{A_{n+1}}(\xi) = -2i\pi\int_{\xi_{n+1} + \xi^{\prime} = \xi} \big(\hat{u}_{n+1}(\xi_{n+1}) \cdot \xi^{\prime}\big)\widehat{A_{n}}(\xi^{\prime})\, d\mu \\
  -\sum_{k=0}^{n} \int_{\xi_{0} + \dotsb + \xi_{n} = \xi} a_{n}(\xi_{0}, \dotsc ,\xi_{n}) \left[\hat{u}_{0}(\xi_{0}), \dotsc ,\widehat{\nabla_{u_{n+1}} u_{k}}(\xi_{k}), \dotsc , \hat{u}_{n}(\xi_{n}) \right]\, d\mu.
\end{multline*}
By Fubini's theorem and the recurrence hypothesis, the first term in the right hand side can be written as
\begin{multline*}
  -2i\pi\int_{\xi_{0} + \dotsb + \xi_{n+1} = \xi} \big(\hat{u}_{n+1}(\xi_{n+1}) \cdot (\xi_{0} + \dotsb + \xi_{n})\big) \\
  a_{n}(\xi_{0}, \dotsc ,\xi_{n}) \left[\hat{u}_{0}(\xi_{0}), \dotsc ,\hat{u}_{n}(\xi_{n}) \right]\, d\mu,
\end{multline*}
while each term in the sum can be written as
\begin{multline*}
  -2i\pi\int_{\xi_{0} + \dotsb + \xi_{n+1} = \xi} \big(\hat{u}_{n+1}(\xi_{n+1}) \cdot \xi_{k} \big) \\
  a_{n}(\xi_{0}, \dotsc ,\xi_{k}+\xi_{n+1}, \dotsc \xi_{n}) \left[\hat{u}_{0}(\xi_{0}), \dotsc ,\hat{u}_{n}(\xi_{n}) \right]\, d\mu.
\end{multline*}
This shows that~\eqref{eq:multi-Fourier-multipliers} is still true for $\widehat{A_{n+1}}$, with
\begin{multline*}
  a_{n+1}(\xi_{0}, \dotsc ,\xi_{n+1})\left[X_{0}, \dotsc , X_{n+1} \right] = \\
  -2i\pi\big(X_{n+1} \cdot ( \xi_{0} + \dotsb +\xi_{n}) \big) a_{n}(\xi_{0}, \dotsc ,\xi_{n}) \left[X_{0}, \dotsc , X_{n} \right] \\
  + 2i\pi\sum_{k=0}^{n} \left( X_{n+1} \cdot \xi_{k} \right) a_{n}(\xi_{0}, \dotsc ,\xi_{k} + \xi_{n+1},\dotsc ,\xi_{n}) \left[X_{0}, \dotsc , X_{n} \right],
\end{multline*}
or in a more condensed form
\begin{multline*}
  a_{n+1}(\xi_{0}, \dotsc ,\xi_{n+1}) = \\
  (2i\pi) \sum_{k=0}^{n} \Big[ a_{n}(\xi_{0}, \dotsc ,\xi_{k} + \xi_{n+1},\dotsc ,\xi_{n}) -  a_{n}(\xi_{0}, \dotsc ,\xi_{n}) \Big] \otimes \xi_{k}^{\sharp} .
\end{multline*}
\end{proof}

\begin{lem}\label{lem:nth-derivative-estimate}
For each $n \in \NN$, there exists $C_{n} >0$ such that
\begin{equation}\label{eq:nth-derivative-estimate}
  \norm{a_{n}(\xi_{0}, \dotsc ,\xi_{n})} \le
  C_{n} \left(\prod_{k=0}^{n}\lambda_{1}(\xi_{k})\right)\left( \sum_{J \subset I_{n}} \lambda_{r-1}\big(\xi_{0} + \sum_{j \in J} \xi_{j}\big) \right),
\end{equation}
for all $\xi_{0}, \dotsc , \xi_{n} \in \RR$, where $I_{n}:=\set{1, \dotsc ,n}$ and $\lambda_{r}(\xi) := (1 + \abs{\xi}^{2})^{r/2}$.
\end{lem}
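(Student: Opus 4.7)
The plan is to proceed by induction on $n$, strengthening the statement so that the target bound on $a_n$ is proved in tandem with a companion bound on each partial derivative:
\begin{equation*}
  \norm{\nabla_{\xi_k} a_n(\xi_0,\dotsc,\xi_n)} \le C'_n \prod_{j \neq k} \lambda_1(\xi_j) \sum_{J \subset I_n} \lambda_{r-1}\bigl(\xi_0 + \sum_{j\in J}\xi_j\bigr),
\end{equation*}
in which the factor $\lambda_1(\xi_k)$ is removed. The base case $n=0$ is immediate: $\norm{a(\xi_0)} \lesssim \lambda_r(\xi_0) = \lambda_1(\xi_0)\lambda_{r-1}(\xi_0)$ and $\norm{\nabla a(\xi_0)} \lesssim \lambda_{r-1}(\xi_0)$ both follow directly from the $\mathcal{S}^r$ hypothesis (with $I_0 = \emptyset$ contributing only $J = \emptyset$).

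For the inductive step on $a_{n+1}$, apply the recursion~\eqref{eq:multi-Fourier-symbols} combined with the fundamental theorem of calculus to each difference:
\begin{equation*}
  a_n(\dotsc,\xi_k+\xi_{n+1},\dotsc) - a_n(\dotsc,\xi_k,\dotsc) = \int_0^1 (\nabla_{\xi_k} a_n)(\dotsc,\xi_k + t\xi_{n+1},\dotsc)\cdot\xi_{n+1}\, dt.
\end{equation*}
Feed in the companion bound on $\nabla_{\xi_k}a_n$; the factors $|\xi_k|\,|\xi_{n+1}| \le \lambda_1(\xi_k)\lambda_1(\xi_{n+1})$ coming from the outer $\xi_k^\sharp$ and the inner displacement restore the two missing powers of $\lambda_1$. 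The argument of each $\lambda_{r-1}$ that appears is either a permitted subset sum $\eta := \xi_0 + \sum_{j \in J}\xi_j$ unchanged (when $k \notin J\cup\{0\}$) or the convex combination $\eta + t\xi_{n+1}$ between $\eta$ and $\eta+\xi_{n+1}$. Since $r \ge 1$, convexity of $|\cdot|^2$ gives $1+|\eta+t\xi_{n+1}|^2 \le \max(1+|\eta|^2,\,1+|\eta+\xi_{n+1}|^2)$, and monotonicity of $(1+\cdot)^{(r-1)/2}$ then yields $\sup_{t\in[0,1]}\lambda_{r-1}(\eta+t\xi_{n+1}) \le \lambda_{r-1}(\eta)+\lambda_{r-1}(\eta+\xi_{n+1})$. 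Summing over $J\subset I_n$ and pairing each $J$ with $J\cup\{n+1\}$ exhausts all subsets of $I_{n+1}$ and produces precisely the required sum.

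The companion bound on $\nabla_{\xi_l}a_{n+1}$ is obtained by differentiating the recursion in $\xi_l$: the derivative hits either the argument of $a_n$, reducing to the same FTC argument applied to the inductive companion bound on $\nabla_{\xi_l}a_n$, or the factor $\xi_l^\sharp$ in the outer tensor, producing a diagonal term proportional to $[a_n(\dotsc,\xi_l+\xi_{n+1},\dotsc) - a_n(\dotsc)]$ which is again handled by FTC. In both situations exactly one factor $\lambda_1(\xi_l)$ is absent from the resulting bound, as required.

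The main obstacle is the bookkeeping of subset sums: one must verify that FTC applied to a difference of consecutive subset sums always produces an argument of $\lambda_{r-1}$ lying on the segment between two permitted base points, and that the pairing $J \leftrightarrow J\cup\{n+1\}$ correctly exhausts $2^{I_{n+1}}$. The restriction $r \ge 1$ (in force throughout via Lemma~\ref{lem:A_n}) enters precisely through the monotonicity of $\lambda_{r-1}$ used in the convex-combination estimate; a direct bound using the triangle inequality on the finite differences would not work, as it loses the crucial factor $\lambda_1(\xi_{n+1})$.
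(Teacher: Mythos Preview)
Your inductive scheme does not close. The step for the main bound on $a_{n+1}$ works exactly as you describe, but the step for the companion bound on $\nabla_{\xi_{l}}a_{n+1}$ with $0\le l\le n$ does not. Differentiating the recursion in $\xi_{l}$ produces, for each $k$, the term
\[
  \big[(\nabla_{\xi_{l}}a_{n})(\xi_{0},\dotsc,\xi_{k}+\xi_{n+1},\dotsc,\xi_{n})-(\nabla_{\xi_{l}}a_{n})(\xi_{0},\dotsc,\xi_{n})\big]\otimes\xi_{k}^{\sharp},
\]
a finite difference of $\nabla_{\xi_{l}}a_{n}$ in the variable $\xi_{k}$. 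To extract the factor $\lambda_{1}(\xi_{n+1})$ you must apply the fundamental theorem of calculus in $\xi_{k}$, which yields $\nabla_{\xi_{k}}\nabla_{\xi_{l}}a_{n}$, a \emph{second} derivative that is not covered by your induction hypothesis. The triangle inequality on the two endpoints, as you note yourself, loses the factor $\lambda_{1}(\xi_{n+1})$; and if you try instead to absorb the extra $\lambda_{1}(\xi_{k})$ coming from $\xi_{k}^{\sharp}$ you end up with $\lambda_{1}(\xi_{k})^{2}$ in place of $\lambda_{1}(\xi_{k})$. One can attempt to repair this by carrying bounds on \emph{all} $\partial^{\alpha}a_{n}$ through the induction, but that is substantially more work than what you have written and the correct weights are not obvious.

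The paper avoids this difficulty altogether. Instead of inducting on derivative bounds for $a_{n}$, it proves a structural lemma (Lemma~\ref{lem:rec-sn}) showing that the recursion operator $\mathrm{Rec}$ maps the explicit tensors $s_{n}^{p_{1},\dotsc,p_{r}}$ into linear combinations of $s_{n+1}^{\cdot}$'s. Since $a_{1}=-(2i\pi)\,s_{1}^{1}$, it follows that each $a_{n}$ is a finite linear combination of $s_{n}^{p_{1},\dotsc,p_{r}}$'s. Every $s_{n}^{p_{1},\dotsc,p_{r}}$ is, by construction, an iterated finite difference of $a$ itself tensored with linear forms, so an iterated mean-value estimate applied directly to $a$ (using the full $\mathcal{S}^{r}$ decay on $\partial^{\beta}a$ for all $\beta$) yields~\eqref{eq:nth-derivative-estimate}. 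The key point is that the paper's decomposition brings the estimate back to higher derivatives of the \emph{original} symbol $a$, where the required bounds are available by hypothesis, rather than to higher derivatives of $a_{n}$, where they are not.
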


To prove Lemma~\ref{lem:nth-derivative-estimate}, we will need one further Lemma. Therefore we will introduce first a few useful notations.
Let $n\ge 1$, we set
\begin{equation*}
  t_{n}(\xi) := a(\xi)\otimes \overbrace{\xi^{\sharp} \otimes \dotsb \otimes \xi^{\sharp}}^{\text{$n$-times}}
\end{equation*}
which is an $(n+1)$-order tensor. Given $1 \le r \le n$ and $p_{1} < \dotsb < p_{r}$ in $\set{1,\dotsc , n}$ we define
\begin{equation*}
  t_{n}^{p_{1},\dotsc ,p_{r}}(\xi_{0}, \dotsc ,\xi_{r-1})(\xi) := a(\xi) \otimes \xi^{\sharp} \otimes \dotsb \otimes \underbrace{\xi_{0}^{\sharp}}_{p_{1}} \otimes \dotsb \otimes \underbrace{\xi_{r-1}^{\sharp}}_{p_{r}} \otimes \dotsb \otimes \xi^{\sharp},
\end{equation*}
as the tensor obtained from $t_{n}$ by \emph{freezing} the variable $\xi$ at position $p_{1}, \dotsc , p_{r}$ to $\xi_{0}, \dotsc ,\xi_{r-1}$.

\begin{expl}
\begin{equation*}
  t_{2}^{1}(\xi_{0})(\xi) = a(\xi) \otimes \xi_{0}^{\sharp} \otimes \xi^{\sharp},
\end{equation*}
and
\begin{equation*}
  t_{3}^{1,3}(\xi_{0},\xi_{1})(\xi) = a(\xi) \otimes \xi_{0}^{\sharp} \otimes \xi^{\sharp} \otimes \xi_{1}^{\sharp}.
\end{equation*}
\end{expl}

Let $I_{r,n} := \set{r, \dotsc , n}$. We define now the following $(n+1)$-order tensor
\begin{multline*}
  s_{n}^{p_{1},\dotsc ,p_{r}}(\xi_{0}, \dotsc ,\xi_{r-1})(\xi_{r}, \dotsc ,\xi_{n}) := \sum_{\sigma \in \mathfrak{S}_{r}} \epsilon(\sigma) \sum_{J \subset I_{r,n}} (-1)^{\abs{J}}
  \\
  t_{n}^{p_{1},\dotsc ,p_{r}}(\xi_{\sigma(0)}, \dotsc ,\xi_{\sigma(r-1)})(\xi_{0} + \dotsb + \xi_{r-1} + \sum_{j \in J} \xi_{j}),
\end{multline*}
where $\abs{J}$ denotes the cardinal of $J$, $\mathfrak{S}_{r}$ is the symmetric group of order $r$ and $\epsilon(\sigma)$ is the signature of the permutation $\sigma \in \mathfrak{S}_{r}$.

\begin{rem}
  Note that the expression
  \begin{equation*}
    s_{n}^{p_{1},\dotsc ,p_{r}}(\xi_{0}, \dotsc ,\xi_{r-1})(\xi_{r}, \dotsc ,\xi_{n})
  \end{equation*}
  is \emph{skew-symmetric} in the variables $\xi_{0}, \dotsc ,\xi_{r-1}$ and \emph{symmetric} in the variables $\xi_{r}, \dotsc ,\xi_{n}$.
\end{rem}

\begin{expl}
\begin{equation*}
  s_{1}^{1}(\xi_{0})(\xi_{1}) = \big(a(\xi_{0}) - a(\xi_{0} + \xi_{1})\big)\otimes \xi_{0}^{\sharp},
\end{equation*}
and
\begin{equation*}
  s_{2}^{1,2}(\xi_{0},\xi_{1})(\xi_{2}) = \big( a(\xi_{0} + \xi_{1}) - a(\xi_{0} + \xi_{1} + \xi_{2}) \big) \otimes  \big(\xi_{0}^{\sharp} \wedge \xi_{1}^{\sharp}\big).
\end{equation*}
\end{expl}

Finally, given a sequence $b_{n}(\xi_{0}, \dotsc ,\xi_{n})$ of $(n+1)$-order tensors, we define
\begin{multline*}
  \mathrm{Rec}(b_{n})(\xi_{0}, \dotsc ,\xi_{n+1}) :=
  \\
  \sum_{k=0}^{n} \Big[ b_{n}(\xi_{0}, \dotsc ,\xi_{k} + \xi_{n+1},\dotsc ,\xi_{n}) -  b_{n}(\xi_{0}, \dotsc ,\xi_{n}) \Big] \otimes \xi_{k}^{\sharp}.
\end{multline*}

\begin{rem}
Recall that the sequence $a_n$ satisfies
\begin{equation*}
 a_{n+1}=\mathrm{Rec}(a_{n})\,.
\end{equation*}
This will be important later, to prove Lemma~\ref{lem:nth-derivative-estimate}.
\end{rem}

\begin{lem}\label{lem:rec-sn}
  The sequence $s_n^{p_{1},\dotsc ,p_{r}}$ satisfies the relation
  \begin{multline*}
    \mathrm{Rec}\left( s_{n}^{p_{1},\dotsc ,p_{r}}\right) (\xi_{0}, \dotsc ,\xi_{n+1}) = - s_{n+1}^{p_{1},\dotsc ,p_{r}}(\xi_{0}, \dotsc ,\xi_{r-1})(\xi_{r}, \dotsc ,\xi_{n+1})
    \\
    - s_{n+1}^{p_{1},\dotsc ,p_{r}, n+1}(\xi_{0}, \dotsc ,\xi_{r-1},\xi_{n+1})(\xi_{r}, \dotsc ,\xi_{n}).
  \end{multline*}
\end{lem}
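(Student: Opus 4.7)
The plan is to expand both sides in a common language and match the resulting tensor sums. The key observation is the factorization
\begin{equation*}
t_{n+1}^{p_1,\dots,p_r}(\eta_0,\dots,\eta_{r-1})(\xi)=t_n^{p_1,\dots,p_r}(\eta_0,\dots,\eta_{r-1})(\xi)\otimes\xi^\sharp,
\end{equation*}
\begin{equation*}
t_{n+1}^{p_1,\dots,p_r,n+1}(\eta_0,\dots,\eta_r)(\xi)=t_n^{p_1,\dots,p_r}(\eta_0,\dots,\eta_{r-1})(\xi)\otimes\eta_r^\sharp,
\end{equation*}
which hold because $p_r\le n<n+1$ and are immediate from the definition of $t_m^{p_1,\dots,p_s}$. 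Adopting the shorthand $T_\sigma(\xi):=t_n^{p_1,\dots,p_r}(\xi_{\sigma(0)},\dots,\xi_{\sigma(r-1)})(\xi)$, $\eta:=\xi_0+\dots+\xi_{r-1}$, $\zeta_J:=\sum_{j\in J}\xi_j$, and $\Delta_\sigma(J):=T_\sigma(\eta+\zeta_J+\xi_{n+1})-T_\sigma(\eta+\zeta_J)$, I will rewrite every term appearing on both sides in terms of these quantities.

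For the right-hand side, splitting subsets $J'\subset I_{r,n+1}$ by whether $n+1\in J'$ produces
\begin{equation*}
-s_{n+1}^{p_1,\dots,p_r}=\sum_{\sigma\in\mathfrak{S}_r}\epsilon(\sigma)\sum_{J\subset I_{r,n}}(-1)^{\abs{J}}\bigl\{\Delta_\sigma(J)\otimes(\eta+\zeta_J)^\sharp+T_\sigma(\eta+\zeta_J+\xi_{n+1})\otimes\xi_{n+1}^\sharp\bigr\}.
\end{equation*}
For $-s_{n+1}^{p_1,\dots,p_r,n+1}(\xi_0,\dots,\xi_{r-1},\xi_{n+1})(\xi_r,\dots,\xi_n)$, with $\alpha_j:=\xi_j$ for $j<r$ and $\alpha_r:=\xi_{n+1}$, I partition the sum over $\sigma'\in\mathfrak{S}_{r+1}$ by $i:=\sigma'(r)$. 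The case $i=r$ yields $-\sum_\sigma\epsilon(\sigma)\sum_J(-1)^{\abs{J}}T_\sigma(\eta+\xi_{n+1}+\zeta_J)\otimes\xi_{n+1}^\sharp$, which cancels the $\xi_{n+1}^\sharp$-piece of $-s_{n+1}^{p_1,\dots,p_r}$. Each case $i<r$ is parametrized by a pair $(\sigma,j_0)\in\mathfrak{S}_r\times\{0,\dots,r-1\}$ via $\sigma'=\tilde\sigma\circ(j_0,r)$, where $\tilde\sigma$ extends $\sigma\in\mathfrak{S}_r$ by $\tilde\sigma(r)=r$ and $j_0:=\sigma^{-1}(i)$; since $\sigma'$ differs from $\tilde\sigma$ by a single transposition, $\epsilon(\sigma')=-\epsilon(\sigma)$, and the total $i<r$ contribution aggregates to $\sum_{\sigma,j_0}\epsilon(\sigma)\sum_J(-1)^{\abs{J}}T''_{\sigma,j_0}(\eta+\xi_{n+1}+\zeta_J)\otimes\xi_{\sigma(j_0)}^\sharp$, with $T''_{\sigma,j_0}$ denoting $T_\sigma$ with the covector $\xi_{\sigma(j_0)}^\sharp$ replaced by $\xi_{n+1}^\sharp$.

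For the left-hand side I split $\mathrm{Rec}$ by whether the index $k$ acts on a symmetric argument ($k\in I_{r,n}$) or a skew one ($k<r$). In the symmetric case only subsets $J\ni k$ contribute to the finite difference; exchanging sums and using $\sum_{k\in J}\xi_k^\sharp=\zeta_J^\sharp$ gives $\sum_\sigma\epsilon(\sigma)\sum_J(-1)^{\abs{J}}\Delta_\sigma(J)\otimes\zeta_J^\sharp$. In the skew case, multilinearity of $t_n^{p_1,\dots,p_r}$ in its first $r$ arguments decomposes the shift $\xi_{k_0}\to\xi_{k_0}+\xi_{n+1}$ into a \emph{diagonal} piece (where $\xi_{k_0}^\sharp$ still occupies position $\sigma^{-1}(k_0)$ but $\eta$ shifts) plus a \emph{cross} piece (where $\xi_{n+1}^\sharp$ replaces it). Summing the diagonals over $k_0$ and using $\sum_{k_0<r}\xi_{k_0}^\sharp=\eta^\sharp$ yields $\sum_\sigma\epsilon(\sigma)\sum_J(-1)^{\abs{J}}\Delta_\sigma(J)\otimes\eta^\sharp$, which added to the symmetric contribution produces $\Delta_\sigma(J)\otimes(\eta+\zeta_J)^\sharp$, matching the first summand of the reduced RHS. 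Reindexing the cross sum via $j_0:=\sigma^{-1}(k_0)$ matches it term-by-term with the $i<r$ expression derived above. The main obstacle is the combinatorial bookkeeping around the bijection $\sigma'\leftrightarrow(\sigma,j_0)$ and the sign identity $\epsilon(\sigma')=-\epsilon(\sigma)$; once this is established, the rest of the proof is a routine expansion of the definitions.
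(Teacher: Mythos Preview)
Your proof is correct and follows essentially the same approach as the paper's: the same splitting of $\mathrm{Rec}$ into the skew indices $k<r$ and the symmetric indices $k\ge r$, the same use of multilinearity of $t_n^{p_1,\dots,p_r}$ in its first $r$ slots to isolate a diagonal and a cross piece, the same observation that for $k\ge r$ only $k\in J$ contributes, and the same coset-type decomposition of $\mathfrak{S}_{r+1}$ to match the cross terms with $-s_{n+1}^{p_1,\dots,p_r,n+1}$. The only organisational difference is that the paper works from the left-hand side towards the right, whereas you expand both sides and meet in the middle; your right-coset parametrisation $\sigma'=\tilde\sigma\circ(j_0,r)$ is the same combinatorial fact as the paper's left-coset decomposition $\mathfrak{S}_{r+1}=\mathfrak{S}_r\cup\bigcup_{k}(k,n+1)\mathfrak{S}_r$.
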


\begin{proof}
The expression $\mathrm{Rec}\left( s_{n}^{p_{1},\dotsc ,p_{r}}\right) (\xi_{0}, \dotsc ,\xi_{n+1})$ is the sum of two terms
\begin{multline*}
  R_{1} := \sum_{k=0}^{r-1} \Big[ s_{n}^{p_{1},\dotsc ,p_{r}}(\xi_{0}, \dotsc ,\xi_{k} + \xi_{n+1}, \dotsc ,\xi_{r-1})(\xi_{r}, \dotsc ,\xi_{n})
  \\
  -  s_{n}^{p_{1},\dotsc ,p_{r}}(\xi_{0}, \dotsc ,\xi_{r-1})(\xi_{r}, \dotsc ,\xi_{n}) \Big] \otimes \xi_{k}^{\sharp},
\end{multline*}
and
\begin{multline*}
  R_{2} := \sum_{k=r}^n \Big[ s_{n}^{p_{1},\dotsc ,p_{r}}(\xi_{0}, \dotsc ,\xi_{r-1})(\xi_{r}, \dotsc ,\xi_{k} + \xi_{n+1}, \dotsc ,\xi_{n})
  \\
  -  s_{n}^{p_{1},\dotsc ,p_{r}}(\xi_{0}, \dotsc ,\xi_{r-1})(\xi_{r}, \dotsc ,\xi_{n}) \Big] \otimes \xi_{k}^{\sharp}.
\end{multline*}
Expanding $s_{n}^{p_{1},\dotsc ,p_{r}}$, we get first
\begin{multline*}
  R_{1} = \sum_{\sigma \in \mathfrak{S}_{r}} \epsilon(\sigma) \sum_{J \subset I_{r,n}} (-1)^{\abs{J}} \sum_{k=0}^{r-1} \Big\{
  \\
  t_{n}^{p_{1},\dotsc ,p_{r}}(\xi_{\sigma(0)}, \dotsc , \underbrace{\xi_{k} + \xi_{n+1}}_{\sigma^{-1}(k)}, \dotsc ,\xi_{\sigma(r-1)})(\xi_{0} + \dotsb + \xi_{r-1} + \xi_{n+1} + \sum_{j \in J} \xi_{j})\otimes \xi_{k}^{\sharp}
  \\
  -  t_{n}^{p_{1},\dotsc ,p_{r}}(\xi_{\sigma(0)}, \dotsc ,\xi_{\sigma(r-1)})(\xi_{0} + \dotsb + \xi_{r-1} + \sum_{j \in J} \xi_{j}) \otimes \xi_{k}^{\sharp} \Big\},
  \\
\end{multline*}
and using the linearity of $t_{n}^{p_{1},\dotsc ,p_{r}}$ in the first $r$ variables, we have
\begin{multline*}
  R_{1} = \sum_{\sigma \in \mathfrak{S}_{r}} \epsilon(\sigma) \sum_{J \subset I_{r,n}} (-1)^{\abs{J}} \Big\{
  \\
  t_{n}^{p_{1},\dotsc ,p_{r}}(\xi_{\sigma(0)}, \dotsc ,\xi_{\sigma(r-1)})(\xi_{0} + \dotsb + \xi_{r-1} + \xi_{n+1} + \sum_{j \in J} \xi_{j}) \otimes (\sum_{k=0}^{r-1} \xi_{k}^{\sharp})
  \\
  -  t_{n}^{p_{1},\dotsc ,p_{r}}(\xi_{\sigma(0)}, \dotsc ,\xi_{\sigma(r-1)})(\xi_{0} + \dotsb + \xi_{r-1} + \sum_{j \in J} \xi_{j}) \otimes (\sum_{k=0}^{r-1} \xi_{k}^{\sharp})
  \\
  + \sum_{k=0}^{r-1} t_{n}^{p_{1},\dotsc ,p_{r}}(\xi_{\sigma(0)}, \dotsc , \underbrace{\xi_{n+1}}_{\sigma^{-1}(k)}, \dotsc ,\xi_{\sigma(r-1)})(\xi_{0} + \dotsb + \xi_{r-1} + \xi_{n+1} + \sum_{j \in J} \xi_{j})\otimes \xi_{k}^{\sharp} \Big\}.
\end{multline*}
For the second term $R_{2}$, we get
\begin{multline*}
  R_{2} = \sum_{\sigma \in \mathfrak{S}_{r}} \epsilon(\sigma) \sum_{J \subset I_{r,n}} (-1)^{\abs{J}} \sum_{k=r}^{n}\Big\{
  \\
  t_{n}^{p_{1},\dotsc ,p_{r}}(\xi_{\sigma(0)}, \dotsc ,\xi_{\sigma(r-1)})(\xi_{0} + \dotsb + \xi_{r-1} + \delta_{J}(k)\xi_{n+1} + \sum_{j \in J} \xi_{j}) \otimes  \xi_{k}^{\sharp}
  \\
  -  t_{n}^{p_{1},\dotsc ,p_{r}}(\xi_{\sigma(0)}, \dotsc ,\xi_{\sigma(r-1)})(\xi_{0} + \dotsb + \xi_{r-1} + \sum_{j \in J} \xi_{j}) \otimes  \xi_{k}^{\sharp}\Big\},
\end{multline*}
where $\delta_{J}$ is the characteristic function of $J$. Note that, for $J$ given, the only non-zero terms in the sum $\sum_{k=r}^{n}$ are those for which $k$ belongs to $J$. We get thus
\begin{multline*}
  R_{2} = \sum_{\sigma \in \mathfrak{S}_{r}} \epsilon(\sigma) \sum_{J \subset I_{r,n}} (-1)^{\abs{J}} \Big\{
  \\
  t_{n}^{p_{1},\dotsc ,p_{r}}(\xi_{\sigma(0)}, \dotsc ,\xi_{\sigma(r-1)})(\xi_{0} + \dotsb + \xi_{r-1} + \xi_{n+1} + \sum_{j \in J} \xi_{j}) \otimes (\sum_{j \in J} \xi_{j}^{\sharp})
  \\
  -  t_{n}^{p_{1},\dotsc ,p_{r}}(\xi_{\sigma(0)}, \dotsc ,\xi_{\sigma(r-1)})(\xi_{0} + \dotsb + \xi_{r-1} + \sum_{j \in J} \xi_{j}) \otimes (\sum_{j \in J} \xi_{j}^{\sharp})\Big\}.
\end{multline*}
Summing up the two expressions, we obtain
\begin{multline*}
  R_{1} + R_{2} = \sum_{\sigma \in \mathfrak{S}_{r}} \epsilon(\sigma) \sum_{J \subset I_{r,n}} (-1)^{\abs{J}} \Big\{
  \\
  t_{n+1}^{p_{1},\dotsc ,p_{r}}(\xi_{\sigma(0)}, \dotsc ,\xi_{\sigma(r-1)})(\xi_{0} + \dotsb + \xi_{r-1} + \xi_{n+1} + \sum_{j \in J} \xi_{j})
  \\
  -  t_{n+1}^{p_{1},\dotsc ,p_{r}}(\xi_{\sigma(0)}, \dotsc ,\xi_{\sigma(r-1)})(\xi_{0} + \dotsb + \xi_{r-1} + \sum_{j \in J} \xi_{j})
  \\
  + \sum_{k=0}^{r-1} t_{n+1}^{p_{1},\dotsc ,p_{r},n+1}(\xi_{\sigma(0)}, \dotsc , \underbrace{\xi_{n+1}}_{\sigma^{-1}(k)}, \dotsc ,\xi_{\sigma(r-1)},\xi_{k})(\xi_{0} + \dotsb + \xi_{r-1} + \xi_{n+1} + \sum_{j \in J} \xi_{j})
  \\
  - t_{n+1}^{p_{1},\dotsc ,p_{r},n+1}(\xi_{\sigma(0)}, \dotsc ,\xi_{\sigma(r-1)},\xi_{n+1})(\xi_{0} + \dotsb + \xi_{r-1} + \xi_{n+1} + \sum_{j \in J} \xi_{j})\Big\}.
\end{multline*}
Now
\begin{multline*}
  \sum_{J \subset I_{r,n}} (-1)^{\abs{J}} \Big\{
  t_{n+1}^{p_{1},\dotsc ,p_{r}}(\xi_{\sigma(0)}, \dotsc ,\xi_{\sigma(r-1)})(\xi_{0} + \dotsb + \xi_{r-1} + \xi_{n+1} + \sum_{j \in J} \xi_{j})
  \\
  -  t_{n+1}^{p_{1},\dotsc ,p_{r}}(\xi_{\sigma(0)}, \dotsc ,\xi_{\sigma(r-1)})(\xi_{0} + \dotsb + \xi_{r-1} + \sum_{j \in J} \xi_{j}) \Big\}
\end{multline*}
is equal to
\begin{multline*}
  - \sum_{J \subset I_{r,n+1}} (-1)^{\abs{J}} t_{n+1}^{p_{1},\dotsc ,p_{r}}(\xi_{\sigma(0)}, \dotsc ,\xi_{\sigma(r-1)})(\xi_{0} + \dotsb + \xi_{r-1} + \sum_{j \in J} \xi_{j}),
\end{multline*}
whereas
\begin{multline*}
  \sum_{\sigma \in \mathfrak{S}_{r}} \epsilon(\sigma) \sum_{J \subset I_{r,n}} (-1)^{\abs{J}} \Big\{
  \\
  \sum_{k=0}^{r-1} t_{n+1}^{p_{1},\dotsc ,p_{r},n+1}(\xi_{\sigma(0)}, \dotsc , \underbrace{\xi_{n+1}}_{\sigma^{-1}(k)}, \dotsc ,\xi_{\sigma(r-1)},\xi_{k})(\xi_{0} + \dotsb + \xi_{r-1} + \xi_{n+1} + \sum_{j \in J} \xi_{j})
  \\
  - t_{n+1}^{p_{1},\dotsc ,p_{r},n+1}(\xi_{\sigma(0)}, \dotsc ,\xi_{\sigma(r-1)},\xi_{n+1})(\xi_{0} + \dotsb + \xi_{r-1} + \xi_{n+1} + \sum_{j \in J} \xi_{j})\Big\}
\end{multline*}
is equal to
\begin{multline*}
  -\sum_{\sigma \in \mathfrak{S}_{r+1}} \epsilon(\sigma) \sum_{J \subset I_{r,n}} (-1)^{\abs{J}}
  \\
   t_{n+1}^{p_{1},\dotsc ,p_{r},n+1}(\xi_{\sigma(0)}, \dotsc ,\xi_{\sigma(r-1)},\xi_{\sigma(n+1)})(\xi_{0} + \dotsb + \xi_{r-1} + \xi_{n+1} + \sum_{j \in J} \xi_{j}),
\end{multline*}
because $\mathfrak{S}_{r+1}$, the permutation group of $\set{0, \dots , r-1, n+1}$ can be written as
\begin{equation*}
  \mathfrak{S}_{r+1} = \mathfrak{S}_{r} \cup (0,n+1)\mathfrak{S}_{r} \cup \dotsb \cup (r-1,n+1)\mathfrak{S}_{r}.
\end{equation*}
Therefore, we have finally
\begin{multline*}
  R_{1} + R_{2} = - s_{n+1}^{p_{1},\dotsc ,p_{r}}(\xi_{0}, \dotsc ,\xi_{r-1})(\xi_{r}, \dotsc ,\xi_{n+1})
  \\
  - s_{n+1}^{p_{1},\dotsc ,p_{r},n+1}(\xi_{0}, \dotsc ,\xi_{r-1},\xi_{n+1})(\xi_{r}, \dotsc ,\xi_{n}),
\end{multline*}
which achieves the proof.
\end{proof}

\begin{proof}[Proof of Lemma~\ref{lem:nth-derivative-estimate}]
For $n=1$, we have
\begin{equation*}
  a_{1}(\xi_{0},\xi_{1}) = (2i\pi) \Big(a(\xi_{0}+\xi_{1}) - a(\xi_{0})\Big)\otimes \xi_{0}^{\sharp} = (-2i\pi)s_{1}^{1}(\xi_{0})(\xi_{1}).
\end{equation*}
By the mean value theorem and the fact $a(D)$ is in the class $\mathfrak{S}^{r}$, we get
\begin{equation*}
  \norm{a(\xi_{0}+\xi_{1}) - a(\xi_{0})} \lesssim \abs{\xi_{1}} \max_{[\xi_{0},\xi_{0}+\xi_{1}]}\lambda_{r-1}(\xi).
\end{equation*}
But the function $\lambda_{r-1}$ has no maximum on $\Rd$, thus the supremum will be achieved at the boundary of the
interval $[\xi_{0},\xi_{0}+\xi_{1}]$ and hence
\begin{equation*}
  \norm{a(\xi_{0}+\xi_{1}) - a(\xi_{0})} \lesssim \abs{\xi_{1}} \Big(\lambda_{r-1}(\xi_{0}) + \lambda_{r-1}(\xi_{0}+\xi_{1})\Big).
\end{equation*}
Therefore, we have
\begin{equation*}
  \abs{a_{1}(\xi_{0},\xi_{1})} \lesssim \abs{\xi_{0}}\abs{\xi_{1}} \Big(\lambda_{r-1}(\xi_{0}) + \lambda_{r-1}(\xi_{0}+\xi_{1})\Big),
\end{equation*}
and we get~\eqref{eq:nth-derivative-estimate} since $\abs{\xi} \le \lambda_{1}(\xi)$.

For $n \ge 2$, Lemma~\ref{lem:rec-sn} and \eqref{eq:multi-Fourier-symbols} ensure that $a_{n}$ can be written as a linear combination of $s_{n}^{p_{1},\dotsc ,p_{r}}$. Now using iteratively the mean value theorem (see~\cite[Lemma A.6]{EK2014} for the details), it can be shown that each $s_{n}^{p_{1},\dotsc ,p_{r}}$ satisfies the estimate
\begin{multline*}
  \abs{s_{n}^{p_{1},\dotsc ,p_{r}}(\xi_{0}, \dotsc ,\xi_{r-1})(\xi_{r}, \dotsc ,\xi_{n})}
  \\
  \lesssim \left(\prod_{k=0}^{n}\lambda_{1}(\xi_{k})\right) \left( \sum_{J \subset I_{r,n}} \lambda_{r-1} \big(\xi_{0} + \dotsb + \xi_{r-1} + \sum_{j \in J} \xi_{j}\big) \right),
\end{multline*}
but
\begin{equation*}
  \sum_{J \subset I_{r,n}} \lambda_{r-1} \big(\xi_{0} + \dotsb + \xi_{r-1} + \sum_{j \in J} \xi_{j}\big) \le \sum_{J \subset \set{1, \dotsc , n}} \lambda_{r-1} \big(\xi_{0} + \sum_{j \in J} \xi_{j}\big),
\end{equation*}
which achieves the proof.
\end{proof}
\medskip

{\bf{Acknowledgement.}} The authors are grateful to Elmar Schrohe and J\"{o}rg Seiler for helpful discussions about various topics on translation invariant operators. It is also a pleasure to thank David Lannes for stimulating discussions concerning commutator estimates. Martin Bauer was supported by FWF project P24625.


\end{document}